\newcommand{\NS}{\mathcal{S}}
\newtheorem{definition}{Definition}
\newtheorem{theorem}{Theorem}
\newtheorem{proposition}{Proposition}
\newtheorem{lemma}{Lemma}
\newtheorem{corollary}{Corollary}
\newtheorem{example}{Example}
\newcommand{\A}{\mathbf{A}}
\newcommand{\V}{\nu}
\newcommand{\conj}{\&}
\newcommand{\meta}{\Longrightarrow}
\newcommand{\Al}[1][A]{\ensuremath{\mathbf{#1}} }
\newcommand{\NA}{\textbf{A}}
\title{Nelson's Logic $\NS$}
\author{
  Thiago Nascimento \\
  Programa de P\'os-Gradua\c{c}\~ao em Sistemas e Computa\c{c}\~ao\\
  Universidade Federal do Rio Grande do Norte\\
  Natal, Brazil \\
  \texttt{thiagnascsilva@gmail.com} \\
   \And
 Umberto Rivieccio \\
  Departamento de Inform\'atica e Matem\'atica\\
  Universidade Federal do Rio Grande do Norte\\
  Natal, Brazil \\
   \And
  João Marcos \\
  Departamento de Inform\'atica e Matem\'atica\\
  Universidade Federal do Rio Grande do Norte\\
  Natal, Brazil \\
   \And
  Matthew Spinks \\
  Dipartimento di Pedagogia, Psicologia, Filosofia\\
  Universit\`{a}  di Cagliari\\
  Cagliari, Italy \\
}
\begin{document}
\maketitle
\begin{abstract}
Besides the better-known Nelson logic ($\mathcal{N}3$) 
and paraconsistent Nelson logic ($\mathcal{N}4$), in 1959
David Nelson introduced, with motivations of realizability and constructibility,
a logic 
called $\NS$.
The logic $\NS$ was originally presented by means of a calculus (crucially lacking the contraction rule) with infinitely many rule schemata and no semantics (other than the intended interpretation into Arithmetic). 
We look here  at the propositional fragment of~$\NS$, showing that it is algebraizable (in fact, implicative), in the sense of Blok and Pigozzi, 
with respect to a variety of three-potent involutive residuated lattices. We thus introduce the first known algebraic semantics for $\NS$ as well as a finite Hilbert-style calculus equivalent to Nelson's presentation; this also allows us to 
clarify
the relation between $\NS$ and the other two Nelson logics 
$\mathcal{N}3$
and $\mathcal{N}4$. 
\end{abstract}

\keywords{Nelson's logics, Constructive logics, Strong negation, Paraconsistent Nelson logic, Substructural logics, 
	Three-potent residuated lattices,
	Algebraic logic.}

\section{Introduction}

In the course of his 
extensive
investigations into the notion of `constructible falsity', David Nelson introduced
a number of systems of non-classical logics that have aroused considerable interest in the logic and algebraic
logic community (see, e.g., \cite{Od08} and the references cited therein).
Over the years, the main goal of Nelson's enterprise was to provide  logical formalisms 
that allow for more fine-grained analyses of notions such as `falsity' and `negation'
than either classical or intuitionistic logic can afford. 

Nelson's analysis of the meaning of `falsity' is in many ways analogous
to the intuitionistic analysis of `truth'.
The main property advocated by Nelson --- namely, 
if a formula $\neg (\phi \land \psi)$ is provable, then either $\neg \phi$ or $\neg \psi $ is provable --- is one that may be regarded as a dual to the well-known disjunction property of intuitionistic logic. 
In later investigations, just as the intuitionists argued against the usual object language formulation of the principle of excluded middle, $\phi \lor \neg \phi$, so Nelson was led to introduce logical systems that reject certain object language 
formulations of the principle of explosion (\emph{ex contradictione quodlibet}). 
The resulting logics thus combine an intuitionistic approach to truth with a dual-intuitionistic treatment of falsity, not unlike the one
of  the so-called bi-intuitionistic logic~\cite{Rau74b,Rau74}. 

The systems in the family nowadays known as Nelson's logics
share many properties with the positive fragment of intuitionistic logic 
(in particular, they do not validate Peirce's law $((\phi \Rightarrow \psi) \Rightarrow \phi) \Rightarrow \phi $).
They also possess 
a negation connective 
with 
inconsistency-tolerant features, in the sense that formulas such as $(\phi \land \neg \phi) \Rightarrow \psi$ need not be valid. 

The oldest and most well-known of Nelson's systems was introduced in \cite{Nel49} and is today known simply as \emph{Nelson logic} (following \cite{Od08}, we shall denote it by $\mathcal{N}3$). This logic is by now well
understood 
from a proof-theoretic (see, e.g., \cite{Me09a}) as well as
an algebraic point of view \cite{SpVe08, SpVe08a},
both perspectives allowing us to regard $\mathcal{N}3$ as a 
substructural logic in the sense of~\cite{GaJiKoOn07}.

\emph{Paraconsistent Nelson logic} $\mathcal{N}4$ is a weakening of $\mathcal{N}3$ 
introduced in \cite{AlNe84} 
(also independently considered  in \cite{LoEs72} and \cite{Rou74}) 
as, precisely, a non-explosive
version of $\mathcal{N}3$ suited for dealing with inexact predicates. 
Our understanding of the proof-theoretic as well as the algebraic properties of $\mathcal{N}4$ is more recent and 
still not thorough. 
However, thanks to recent results of M.\ Spinks and R.\ Veroff, 
$\mathcal{N}4$
can now be viewed as a member of the family of relevance logics; indeed, $\mathcal{N}4$
can be presented, to within definitional equivalence, as an axiomatic 
strengthening
of the contraction-free relevant logic $\mathcal{RW}$
(see~\cite{Spin18} for a summary of this work).

Thanks mainly to the works of S.\ Odintsov (see e.g.~\cite{Od08}, although the result has been formally stated for the first time only in~\cite{Ri11c}), we also know that $\mathcal{N}4$ (like  $\mathcal{N}3$) is algebraizable in the sense of Blok and Pigozzi.
This means that the consequence relation of $\mathcal{N}4$ can be completely characterized in terms of the equational consequence
of the corresponding algebraic semantics, which consists in a variety of algebras called  $\mathcal{N}4$-lattices. 

For our purposes, 
these algebraic completeness results entail in particular that we 
can compare both $\mathcal{N}$3 and $\mathcal{N}$4 to the logic $\NS$ --- the main object of the present paper --- by looking at the corresponding classes of algebras.
Before we turn our attention to $\NS$, 
let us dwell on another remarkable feature shared by  $\mathcal{N}3$ and $\mathcal{N}4$. 

For the propositional part (on which we shall exclusively focus in this paper), the 
language
of  both $\mathcal{N}3$ and $\mathcal{N}4$ comprises a conjunction $(\land)$, a disjunction $(\lor)$, a so-called `strong' negation 
$(\neg)$
and two implications: a so-called
`weak' $(\to)$ and a `strong' one $(\Rightarrow)$,
usually introduced 
via the following term:
$\phi \Rightarrow \psi := (\phi \to \psi) \land (\neg \psi \to \neg \phi)$. 
The presence of \emph{two} implications is crucial in Nelson's logics: it is this feature that 
makes, one may argue, the Nelson formalism more fine-grained than classical  or intuitionistic logic (or most many-valued logics, for that matter). 
With the two Nelson implications at hand, one is able to register finer shades of logical discrimination 
than it is possible in logics that are more `classically' oriented in nature (see Humberstone \cite{Humb05} for a general discussion of this issue).

In fact, different classical or intuitionistic tautologies may be proved within Nelson's logics using either $\to$ or $\Rightarrow$, creating a non-trivial interplay
between these two implications and 
with the negation connective;
the strong implication exhibits an 
inconsistency-tolerant behaviour, in that $( p \land \neg p) \Rightarrow q$ is not provable, 
while the other retains a more `classical' flavour, in that $( p \land \neg p) \rightarrow q$ turns out to be provable. 
On the other hand, while the weak implication ($\to$) allows us to see $\mathcal{N}3$ and $\mathcal{N}4$ as conservative expansions 
of positive intuitionistic logic 
by 
a negation connective with 
certain classical features (De Morgan, involutive laws),  the strong implication 
($\Rightarrow$) permits us to view their algebraic
counterparts as residuated structures, and therefore to regard $\mathcal{N}3$ and $\mathcal{N}4$ as strengthenings (as a matter of fact, axiomatic ones
)
of well-known substructural or relevance logics.

We note, in passing, that the overall picture is made more interesting and complex by the fact that  other  meaningful non-primitive connectives
can be defined --- for example, an `intuitionistic' negation (distinct from 
the primitive `strong' negation $\neg$) given by 
$\phi \to 0 $, 
or a `multiplicative' monoidal conjunction
given by $\neg (\phi \Rightarrow \neg \psi)$ --- and by the fact that interdefinability results hold even among the primitive connectives (some of these  being highly non-trivial to prove). We shall not enter into further details concerning this issue for this is not the main focus of the present paper; instead, we will now  turn our attention to the logic $\NS$, which has so far remained least well-known among the members of the Nelson family. 

The logic that we   (following Nelson's original terminology) call $\NS$ was introduced in~\cite{Nel59} with essentially the same motivations as $\mathcal{N}3$: that is, as a more flexible tool for the analysis of falsity, and in particular as an alternative to both $\mathcal{N}3$ and intuitionistic logic for interpreting Arithmetic through realizability. 
The propositional language of~$\NS$ comprises a conjunction, a disjunction, a falsity constant and (just one) implication. Whether this implication ought to be regarded as a `strong' or a `weak' one 
%
will become 
apparent 
as a result of the investigations in the present paper.

Nelson's presentation of~$\NS$ is given by means of a calculus that appears peculiar, to the modern eye, in several respects. It may look like a sequent calculus, but it is not. One could say that it is in fact a Hilbert-style calculus, though one with few axioms and many rules --- infinitely many, in fact: not just  instances, but infinitely many rule schemata. 
No standard semantics 
is provided in \cite{Nel59}  
for the calculus other than the intended interpretation of its (first-order) formulas  as arithmetic predicates. 

The above features may in part explain why $\NS$ has received, to the present day, very little attention in comparison to the other two Nelson's logics: to the point that, to the best of our knowledge, 
even the most basic questions about $\NS$ had not yet been asked,
let alone answered.  One could start by asking, for example, whether $\NS$ does admit a finite axiomatization. 
Another basic issue, which is interestingly obscured by Nelson's presentations  of~$\NS$ and $\mathcal{N}3$ in
\cite{Nel59}, is whether one of these two logics is stronger than the other or else whether they are incomparable.
Last but not least, 
Nelson observes that certain formulas are not provable in his system~\cite[p.~213]{Nel59}.
In the absence of a complete semantics for  $\NS$, it does not seem obvious how one could prove such claims.
Having established our algebraic completeness result, however, this will become quite straightforward.

The main motivation for the present paper has been to look at the above questions and, more generally --- taking advantage of the modern tools of algebraic logic --- to gain a better insight into
(the propositional part of)  $\NS$ and  into its
relation to other well-known non-classical logics. 
As we shall see in the following sections, we have 
successfully settled all the above-mentioned issues, and
the corresponding answers can be summarized as follows. 

First of all, 
$\NS$ may indeed be axiomatized by means of a finite Hilbert-style
calculus (having \textit{modus ponens} as its only rule schema) which is a strengthening of the contraction-free fragment of intuitionistic logic and also 
of the substructural logic known as the
Full Lambek Calculus with Exchange and Weakening ($\mathcal{FL}_{ew}$). This follows from our main result that $\NS$ is Blok-Pigozzi algebraizable (and therefore, enjoys a strong completeness
theorem) with respect to a certain class of residuated lattices, which are the canonical algebras associated with substructural logics
stronger than $\mathcal{FL}_{ew}$.
Furthermore, we may now say  that the implication of~$\NS$ is indeed a `strong' Nelson implication 
in the sense that 
it can be meaningfully compared with
the corresponding strong implications of $\mathcal{N}3$ and $\mathcal{N}4$.
From this vantage point, we will see that Nelson's logic $\mathcal{N}3$ may be regarded as an axiomatic strengthening  of~$\NS$, whereas $\mathcal{N}4$ is incomparable with~$\NS$. 
And finally we will confirm that
Nelson was correct in claiming that the formulas listed in~\cite[p.~213]{Nel59} are actually not 
provable
in~$\NS$. 

The paper is organized as follows. In Section~\ref{s:two} we introduce the logic $\NS$ 
through Nelson's original presentation (duly amending a number of obvious typos) and employ it to prove a few formulas that will be useful in the following sections.
In  Section~\ref{s:three} we prove that Nelson's calculus is algebraizable, and provide an axiomatization of the 
corresponding class of algebras, 
which 
we call \emph{$\NS$-algebras} (Subsection \ref{ss:threepointone}).  
Because of the above-mentioned peculiar features of  Nelson's calculus, the presentation of~$\NS$-algebras obtained algorithmically via the algebraization process 
is not very convenient.
We introduce then an alternative equational presentation in 
Subsection \ref{ss:threepointtwo} and show the equivalence of the two.
As a result of our own presentation, we 
establish that  $\NS$ is a strengthening 
of $\mathcal{FL}_{ew}$. Taking advantage of this insight, in Section~\ref{s:four} we introduce a finite
Hilbert-style calculus for $\NS$ which is simply an axiomatic strengthening  of a well-known calculus for $\mathcal{FL}_{ew}$. 
Completeness of our axiomatization, and therefore equivalence with Nelson's calculus, is obtained as a corollary
of the algebraizability results. In Section~\ref{s:five} we look at concrete $\NS$-algebras which provide counter-examples for  
the formulas  Nelson claimed to be unprovable within~$\NS$.
We present in Subsection~\ref{ss:dou} an easy way of building an $\NS$-algebra starting from a residuated lattice, which turns out to be useful later on (Section~\ref{s:ext}). 
Section~\ref{s:six} establishes the relation between~$\NS$ and the two other 
Nelson's logics, 
$\mathcal{N}3$ (Subsection \ref{ss:sixpointone}) and $\mathcal{N}4$ (Subsection \ref{ss:sixpointtwo}).
We show in particular that both $\mathcal{N}3$ and 
the three-valued \L ukasiewicz logic (but no other logic in the \L ukasiewicz family) may be seen as axiomatic strengthenings of~$\NS$. In Section~\ref{s:ext} we use the algebraic 
insights 
gained so far to obtain  information on the cardinality
of the strengthenings of~$\NS$.
Finally, Section~\ref{s:seven} contains suggestions for future work.

The present paper is an expanded and improved version of~\cite{NaRi}, 
to which we shall refer whenever doing so allows us to omit or shorten our proofs.
Let us highlight the main differences and present novelties. 
From Section~\ref{s:two} to Theorem~\ref{th:ddt}
of Section~\ref{s:four} we follow essentially Sections 2--4 of~\cite{NaRi}.
The remaining part of Section~\ref{s:four} (dealing with EDPC and WBSO varieties) is new, as is
Section~\ref{s:five}. In particular,
Subsection~\ref{ss:fiveone} contains a proof of the claim made in~\cite{NaRi} that the distributivity axiom (as well as the other
formulas mentioned in Proposition~\ref{p:51}) is not valid in~$\NS$. The usage of the Galatos-Raftery doubling construction
(in both Section~\ref{s:five} and  Section~\ref{s:six}) is entirely new.
Subsection~\ref{ss:sixpointone} is essentially an expanded version of Section 5.1 from~\cite{NaRi};  
on the other hand, the results from Proposition~\ref{prop:n3ext} to the end of Section~\ref{s:six} are new. 
Section~\ref{s:ext} is also entirely new.
\newpage
\section{Nelson's Logic $\NS$}
\label{s:two}

In this section we recall Nelson's 
original presentation of the propositional fragment of~$\NS$, 
modulo the correction of a number of typos that appear 
in~\cite{Nel59}. 


We  denote by $\mathbf{Fm}$ the formula algebra over a given similarity type, freely generated by a denumerable set of propositional variables $\{p,q,r,\ldots\}$. 
We denote by $Fm$ the carrier of $\mathbf{Fm}$, and use $\varphi$, $\psi$ 
and~$\gamma$, possibly decorated with subscripts, to refer to arbitrary elements of $Fm$.
A \emph{logic} is then defined as a substitution-invariant conse\-quence relation~$\vdash$ on $Fm$.  


\begin{definition}
	\label{nelsons}
	Nelson's logic $\NS := \langle \mathbf{Fm}, \vdash_{\NS} \rangle$ is the sentential logic in the language $\langle \land, \lor, \Rightarrow, \neg, 0 
	\rangle $ of type $\langle 2, 2, 2, 1, 0 \rangle$
	defined by the Hilbert-style calculus with the rule
	schemata in Table~\ref{srules} and the following axiom schemata.  We shall henceforth use the abbreviations
	$\phi \Leftrightarrow \psi: = (\phi \Rightarrow \psi) \land (\psi \Rightarrow \phi )$
	and
	$1 := \neg 0$.

	\medskip
	\medskip
	
	
	\noindent 
	Axioms 
	
	\begin{description}
		\item[\texttt{(A1)}]\quad $\phi \Rightarrow \phi$ 
		\item[\texttt{(A2)}]\quad  $0 
		\Rightarrow \psi$
		\item[\texttt{(A3)}]\quad  $\neg \phi \Rightarrow (\phi \Rightarrow 0 
		)$
		\item[\texttt{(A4)}]\quad  $
		1 
		$
		\item[\texttt{(A5)}]\quad  $(\phi \Rightarrow \psi) \Leftrightarrow (\neg \psi \Rightarrow \neg \phi)$
		
	\end{description}
\end{definition}

In Table \ref{srules} below, following Nelson's notation,  $\Gamma$ denotes an arbitrary finite list $( \phi_1, \ldots, \phi_n )$ of formulas, and the following abbreviations are used:
$$
\Gamma \Rightarrow \phi \ := \ \phi_{1} \Rightarrow (\phi_{2} \Rightarrow (\ldots  \Rightarrow (\phi_{n} \Rightarrow \phi)\ldots)).
$$ 
If $\Gamma$ is empty, then  $\Gamma \Rightarrow \phi$ is just $\phi$. 
Moreover, we let
$$
\phi \Rightarrow^{2} \psi \ := \ \phi \Rightarrow (\phi \Rightarrow \psi)
$$ 
and 
$$
\Gamma \Rightarrow^{2} \phi \ := \ \phi_{1} \Rightarrow^{2} (\phi_{2} \Rightarrow^{2}(\ldots  \Rightarrow^{2} (\phi_{n} \Rightarrow^{2} \phi)\ldots)).
$$

\begin{table}[h]
	\begin{tabular}{lll}	
		\infer[\texttt{(P)}]{\Gamma \Rightarrow (\psi \Rightarrow (\phi \Rightarrow \gamma))}{\Gamma \Rightarrow (\phi \Rightarrow (\psi \Rightarrow \gamma))} &  
		\infer[\texttt{(C)}]{\phi \Rightarrow (\phi \Rightarrow \gamma)}{\phi \Rightarrow (\phi \Rightarrow (\phi \Rightarrow \gamma))} &
		\infer[\texttt{(E)}]{\Gamma \Rightarrow \gamma }{\Gamma \Rightarrow \phi  & \phi \Rightarrow \gamma} \\ \\
		
		\infer[\mathrm{(\Rightarrow \texttt{l})}]
		{\Gamma \Rightarrow ((\phi \Rightarrow \psi) \Rightarrow \gamma)}{\Gamma \Rightarrow \phi  & & & \psi \Rightarrow \gamma} &   \infer[\mathrm{(\Rightarrow \texttt{r})}]{\phi \Rightarrow \gamma}{\gamma}  &
		
		\infer[\mathrm{(\land \texttt{l1})}]{(\phi \land \psi) \Rightarrow \gamma}{\phi \Rightarrow \gamma} \\ \\ \infer[\mathrm{(\land \texttt{l2})}]{(\phi \land \psi) \Rightarrow \gamma}{\psi \Rightarrow \gamma}  & \infer[\mathrm{(\land \texttt{r})}]
		{\Gamma \Rightarrow (\phi \land \psi)}{\Gamma \Rightarrow \phi & & \Gamma \Rightarrow \psi}   &
		\infer[\mathrm{(\lor \texttt{l1})}] 
		{(\phi \lor \psi) \Rightarrow \gamma }{\phi \Rightarrow \gamma  & & \psi \Rightarrow \gamma} \\ \\ \infer[\mathrm{(\lor \texttt{l2})}] 
		{(\phi \lor \psi) \Rightarrow^2 \gamma}{\phi \Rightarrow^2 \gamma & & \psi \Rightarrow^2 \gamma} & \infer[\mathrm{(\lor \texttt{r1} )}]{\Gamma \Rightarrow (\phi \lor \psi)}{\Gamma \Rightarrow \phi}  & \infer[\mathrm{(\lor \texttt{r2})}]{\Gamma \Rightarrow (\phi \lor \psi)}{\Gamma \Rightarrow \psi} \\ \\
		
		\infer[\mathrm{(\neg {\Rightarrow} \texttt{l})}]{\neg(\phi \Rightarrow \psi) \Rightarrow \gamma}{(\phi \land \neg \psi) \Rightarrow \gamma} & \infer[\mathrm{(\neg {\Rightarrow} \texttt{r})}]{\Gamma \Rightarrow^{2} \neg (\phi \Rightarrow \psi)}{\Gamma \Rightarrow^{2} (\phi \land \neg \psi)}  &
		
		\infer[\mathrm{(\neg {\land} \texttt{l})}]{\neg(\phi \land \psi) \Rightarrow \gamma }{(\neg \phi \lor \neg \psi) \Rightarrow \gamma} \\ \\ \infer[\mathrm{(\neg {\land}\texttt{r})}]{\Gamma \Rightarrow \neg (\phi \land \psi)}{\Gamma \Rightarrow (\neg \phi \lor \neg \psi)}  &
		\infer[\mathrm{(\neg {\lor} \texttt{l})}]{\neg (\phi \lor \psi) \Rightarrow \gamma}{(\neg \phi \land \neg \psi) \Rightarrow \gamma} &  \infer[\mathrm{(\neg{\lor}\texttt{r})}]{\Gamma \Rightarrow \neg(\phi \lor \psi)}{\Gamma \Rightarrow (\neg \phi \land \neg \psi)}  \\ \\
		\infer[\mathrm{(\neg \neg \texttt{l})}]{\neg \neg \phi \Rightarrow \gamma}{\phi \Rightarrow \gamma} & \infer[\mathrm{(\neg \neg\texttt{r})}]{\Gamma \Rightarrow \neg \neg \phi}{\Gamma \Rightarrow \phi}
		\\
	\end{tabular}
	\caption{Rules of~$\NS$}
	\label{srules}
\end{table}	

We have fixed obvious typos in the rules
$\mathrm{(\land \texttt{l2})}$, $\mathrm{(\land \texttt{r})}$ and  $\mathrm{(\neg {\Rightarrow} \texttt{r})}$ as they appear in \cite[p.~214-5]{Nel59}. For example, rule $\mathrm{(\neg {\Rightarrow} \texttt{r})}$  from Nelson's paper reads as: 
$$
\infer[]{\Gamma \Rightarrow^{2} (\phi \land  \psi)}{\Gamma \Rightarrow^{2} \neg (\phi \Rightarrow \psi)}
$$
This is not even classically valid. One might consider correcting the rule as follows:
$$
\infer[]{\Gamma \Rightarrow^{2} (\phi \land \neg \psi)}{\Gamma \Rightarrow^{2} \neg (\phi \Rightarrow \psi)}
$$
but this  does not seem consistent with the 
convention 
used by Nelson for the other rules: the $\Rightarrow$ connective should appear on the right-hand side at the bottom, and  $\land$  at the top. 
We assume thus that this corrected version was intended to have been written upside-down.
%
The rule \texttt{(C)}, called
\emph{weak condensation}
by Nelson,
replaces (and is indeed a weaker form of) the 
\emph{contraction} rule: 
$$\infer{\phi \Rightarrow \psi}{\phi \Rightarrow (\phi \Rightarrow \psi)}$$
%
This rule
is also known  in the literature as `$3$-$2$ contraction' 
\cite[p.~389]{Re93c}
and corresponds, 
on algebraic models, 
to the 
property of \emph{three-potency} (see Section~\ref{ss:threepointtwo}).
Notice also that the usual rule of \textit{modus ponens} 
(from $\phi$ and $\phi \Rightarrow \psi$, infer $\psi$)
is an instance of  
$\mathrm{\texttt{(E)}}$ for  $\Gamma = \varnothing$. %
%
Lastly,  let us highlight  that every rule schema involving $\Gamma$ is actually a shorthand for a denumerably infinite set of rule schemata. For instance, the schema:
$$ 
\infer[\mathrm{(\Rightarrow \texttt{l})}]
{\Gamma \Rightarrow ((\phi \Rightarrow \psi) \Rightarrow \gamma)}{\Gamma \Rightarrow \phi  & & & \psi \Rightarrow \gamma}
$$
stands for the following collection of rule schemata:
\begin{description}
	\item[] \quad $\infer
	{ (\phi \Rightarrow \psi) \Rightarrow \gamma}{  \phi  & & \psi \Rightarrow \gamma} $
	\smallskip
	
	\item[]  \quad 
	$\infer
	{\gamma_{1} \Rightarrow ((\phi \Rightarrow \psi) \Rightarrow \gamma)}{ \gamma_{1} \Rightarrow \phi  & & & \psi \Rightarrow \gamma}$
	\smallskip
	
	\item[] \quad $\cdots$ 
	\smallskip
	
	\item[] \quad $\infer
	{\gamma_{1} \Rightarrow (\gamma_{2} \Rightarrow (\gamma_{3} \Rightarrow ((\phi \Rightarrow \psi) \Rightarrow \gamma)))}{\gamma_{1} \Rightarrow( \gamma_{2}  \Rightarrow (\gamma_{3} \Rightarrow \phi))  & & & \psi \Rightarrow \gamma}$
	\smallskip
	
	\item[] \quad $\cdots$ 
	
\end{description}
Thus, Nelson's calculus employs not just infinitely many axiom and rule instances, but actually
infinitely many rule schemata. Notice, nonetheless, that defining as usual a derivation as a 
finite sequence of formulas, we have that the consequence relation of $\NS$ is finitary.

\section{Algebraic semantics}
\label{s:three}
In this section we show that $\NS$ is algebraizable 
(and, in fact, is \emph{implicative} in Rasiowa's sense \cite[Definition 2.3]{Font16}), and we give two equivalent presentations for its equivalent algebraic semantics (that we shall call \emph{$\NS$-algebras}). 
The first presentation
is obtained via the algorithm of \cite[Theorem 2.17]{BP89}, while the second one  is closer to the usual axiomatizations
of classes of residuated lattices, which constitute the algebraic counterparts of many logics in the substructural family. In fact, the latter presentation of~$\NS$-algebras will allow us to see at a glance that  
they form an equational class, and will also make it 
easier to compare them with other known classes of algebras related to substructural logics.

Following standard usage, we denote by  $\A$ (in boldface) an algebra and by $A$ (italics) its carrier set.
Given the formula algebra $\mathbf{Fm}$, the associated set of \emph{equations}, $Fm \times Fm$, will henceforth be denoted by $Eq$. 
To say that $\langle \phi, \psi \rangle \in Eq$, we will write $\phi \approx \psi$, as usual. We say that a valuation 
$\V \colon Fm \to
A $ \emph{satisfies $\phi \approx  \psi$ in $\A$} when $\V(\phi) = \V(\psi)$. We say that an algebra $\A$ \emph{satisfies $\phi \approx  \psi$} when all valuations 
over $\A$
satisfy it.

%
%
%

It will be convenient for us to work with the following definition of \emph{algebraizable logic}, which is not the original 
one~\cite[Definition 2.1]{BP89} but an equivalent so-called intrinsic characterization~\cite[Theorem 3.21]{BP89} of it:

\begin{definition}\label{def-algebraizable}	
	A logic $\mathcal{L}$ is \emph{algebraizable} if and only if there are equations  $\mathsf{E}(x) \subseteq Eq$ and formulas $\Delta(x, y) \subseteq Fm$ such that:
	
	\vspace{0.2cm}
	\begin{tabular}{ll}
		$\mathbf{(R)}$	& %
		$\varnothing \vdash_{\mathcal{L}} \Delta(\phi, \phi)$
		\\
		$\mathbf{(Sym)}$	& $\Delta(\phi, \psi) \vdash_{\mathcal{L}} \Delta(\psi, \phi)$\\
		$\mathbf{(Trans)}$	& $\Delta(\phi, \psi) \cup \Delta(\psi, \gamma) \vdash_{\mathcal{L}} \Delta(\phi, \gamma)$\\
		$\mathbf{(Rep)}$	& $\bigcup_{i=1}^n \Delta(\phi_{i}, \psi_{i}) \vdash_{\mathcal{L}} \Delta(\bullet( \phi_{1},\ldots,\phi_{n}), \bullet( \psi_{1},\ldots, \psi_{n}))$, \\
		& for each $n$-ary connective $\bullet$ \\
		$\mathbf{(Alg3)}$	& $\phi \dashv \vdash_{\mathcal{L}} \Delta(\mathsf{E}(\phi))$
	\end{tabular} 
	\vspace{0.1cm}
	
	%
	\noindent 
\end{definition}
	Here, the notation $\Gamma \vdash \Delta$, where $\Delta$ is a set of formulas, means that $\Gamma \vdash \phi$ for each $\phi \in \Delta$.
	The set $\mathsf{E}(x)$ is said to be the set of \emph{defining equations} and $\Delta(x, y)$ is said to be the set of \emph{equivalence formulas}.
	We say that $\mathcal{L}$ is \emph{implicative} when it is algebraizable with  
	$\mathsf{E}(x) := \{ x \approx \alpha(x, x) \}$ and
	$\Delta(x,y) := \{ \alpha(x, y), \alpha(y, x) \}$, where  
	$\alpha(x, y)$ denotes a binary term in the language of $\mathcal{L}$.
	In such a case, the term $\alpha(x, x)$ determines an algebraic constant on every algebra belonging to the algebraic counterpart
	of $\mathcal{L}$ ({see \cite[Lemma 2.6]{Font16}}), and is usually denoted accordingly.

\begin{theorem}[\cite{NaRi}, Theorem 1]
	\label{th:eqfor}
	The logic ${\NS}$ is implicative, and thus algebraizable,
	with defining equation $\mathsf{E}(x) := \{x \approx 1\}$ --- or, equivalently, $\mathsf{E}(x) := \{x \approx x \Rightarrow x \}$ --- and equivalence formulas $\Delta(x, y) := \{x \Rightarrow y, y \Rightarrow x\}.$
	
\end{theorem}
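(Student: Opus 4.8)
The plan is to verify the five conditions $\mathbf{(R)}$, $\mathbf{(Sym)}$, $\mathbf{(Trans)}$, $\mathbf{(Rep)}$ and $\mathbf{(Alg3)}$ of Definition~\ref{def-algebraizable} directly, using the proposed equivalence formulas $\Delta(x,y) := \{x \Rightarrow y,\, y \Rightarrow x\}$ and defining equation $\mathsf{E}(x) := \{x \approx 1\}$. Since the logic is claimed to be \emph{implicative} with $\alpha(x,y) := x \Rightarrow y$, most of these reduce to establishing a handful of derivability facts about~$\Rightarrow$ in Nelson's calculus. First I would record the basic derived rules and theorems that make the implication well-behaved: reflexivity $\vdash_{\NS} \phi \Rightarrow \phi$ is exactly axiom \texttt{(A1)}, which immediately gives $\mathbf{(R)}$; transitivity of~$\Rightarrow$ (from $\phi \Rightarrow \psi$ and $\psi \Rightarrow \gamma$ infer $\phi \Rightarrow \gamma$) is precisely rule \texttt{(E)} with $\Gamma$ a singleton, yielding $\mathbf{(Trans)}$; and $\mathbf{(Sym)}$ is trivial because $\Delta(x,y)$ and $\Delta(y,x)$ are literally the same two-element set $\{x\Rightarrow y, y \Rightarrow x\}$.

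For $\mathbf{(Rep)}$ I would treat each of the five connectives $\land, \lor, \Rightarrow, \neg, 0$ in turn. The nullary case $0$ is vacuous. For the monotone cases I expect the congruence properties to follow from the corresponding left/right rules in Table~\ref{srules}: for instance, to derive $\phi_1 \Rightarrow \phi_2 \vdash_{\NS} (\phi_1 \land \psi) \Rightarrow (\phi_2 \land \psi)$ I would combine $\mathrm{(\land\texttt{l1})}$ and $\mathrm{(\land\texttt{l2})}$ with $\mathrm{(\land\texttt{r})}$ and transitivity, and symmetrically for~$\lor$ using the $(\lor)$-rules. The negation case exploits the contraposition axiom \texttt{(A5)}, which lets me transfer $\phi_1 \Rightarrow \phi_2$ into $\neg\phi_2 \Rightarrow \neg\phi_1$. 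The genuinely delicate connective is the implication~$\Rightarrow$ itself: I must derive, from $\Delta(\phi_1,\phi_2)$ and $\Delta(\psi_1,\psi_2)$, the pair $\Delta(\phi_1 \Rightarrow \psi_1,\, \phi_2 \Rightarrow \psi_2)$, which requires showing that~$\Rightarrow$ is antitone in its first argument and monotone in its second. For monotonicity in the consequent I can use $\mathrm{(\Rightarrow\texttt{r})}$ together with \texttt{(E)}; antitonicity in the antecedent is where I anticipate needing the permutation rule \texttt{(P)} and the left-implication rule $\mathrm{(\Rightarrow\texttt{l})}$ to reshuffle hypotheses into the right position before applying transitivity. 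This is the step I expect to be the main obstacle, since it involves the nested-implication bookkeeping that Nelson's $\Gamma$-notation was designed to handle.

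Finally, for $\mathbf{(Alg3)}$ I must show $\phi \dashv\vdash_{\NS} \Delta(\mathsf{E}(\phi))$, i.e.\ that $\phi$ is interderivable with $\Delta(\phi, 1) = \{\phi \Rightarrow 1,\, 1 \Rightarrow \phi\}$. The forward direction needs $\phi \vdash_{\NS} \phi \Rightarrow 1$ and $\phi \vdash_{\NS} 1 \Rightarrow \phi$: the former follows from $\mathrm{(\Rightarrow\texttt{r})}$ applied to the theorem $1$ (axiom \texttt{(A4)}), while the latter follows from $\mathrm{(\Rightarrow\texttt{r})}$ applied to~$\phi$ itself. The converse direction, $\{\phi \Rightarrow 1,\, 1 \Rightarrow \phi\} \vdash_{\NS} \phi$, is obtained from the premise $1 \Rightarrow \phi$ by one application of \emph{modus ponens} against axiom \texttt{(A4)} (recalling that \emph{modus ponens} is the instance of \texttt{(E)} with $\Gamma = \varnothing$); note that the premise $\phi \Rightarrow 1$ is not even needed here. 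Once all five conditions are in place, algebraizability is immediate from the intrinsic characterization, and the implicative character follows by construction since $\alpha(x,y) = x \Rightarrow y$ with $\alpha(x,x) = x \Rightarrow x$ interderivable with the constant~$1$ via \texttt{(A1)} and \texttt{(A4)}, giving the alternative defining equation $\mathsf{E}(x) := \{x \approx x \Rightarrow x\}$.
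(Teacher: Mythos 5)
Your proposal is correct and follows essentially the same route as the paper's proof (which is deferred to~\cite{NaRi}, Theorem~1): a direct verification of the intrinsic Blok--Pigozzi conditions $\mathbf{(R)}$, $\mathbf{(Sym)}$, $\mathbf{(Trans)}$, $\mathbf{(Rep)}$ and $\mathbf{(Alg3)}$ by explicit derivations in Nelson's calculus, using \texttt{(A1)}, \texttt{(A4)}, \texttt{(A5)}, the rules \texttt{(E)}, \texttt{(P)}, $\mathrm{(\Rightarrow\texttt{l})}$, $\mathrm{(\Rightarrow\texttt{r})}$ and the lattice-connective rules exactly where you indicate. The only cosmetic imprecision is that consequent-monotonicity of $\Rightarrow$ in the $\mathbf{(Rep)}$ step for $\Rightarrow$ is most directly obtained from \texttt{(E)} with the two-element list $\Gamma = (\phi \Rightarrow \psi_1, \phi)$ together with \texttt{(A1)} (or from $\mathrm{(\Rightarrow\texttt{l})}$ plus \texttt{(P)}), rather than from $\mathrm{(\Rightarrow\texttt{r})}$, but the tools you name suffice.
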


\subsection{$\NS$-algebras}
\label{ss:threepointone}

By Blok and Pigozzi's  algorithm (\cite[Theorem 2.17]{BP89};  see also 
\cite[Theorem 30]{CzPi04},
\cite[Proposition 3.44]{Font16}), 
the equivalent algebraic
semantics of~$\NS$ is the 
quasivariety of algebras~\cite[Definition V.2.24]{BuSa00} given by
the following definition:


\begin{definition}
	\label{def:salg}
	An \emph{$\NS$-algebra} is a structure $\mathbf{A} := \langle A, \land, \lor, \Rightarrow, \neg, 0, 1 \rangle$ of type $\langle 2, 2, 2, 1, 0, 0\rangle$ that satisfies 
	the following
	equations and quasiequations:

	\begin{enumerate}
		\item For each axiom $\varphi$ of 
		${\NS}$, the equation $\mathsf{E}(\varphi)$ defined as  
		$
		\mathsf{E}(\varphi)  : =  \varphi \approx 1
		$. 
		
		\item 
		$x \Rightarrow x \approx 1 
		$.
		
		
		\item For each rule
		$$
		\infer[\quad \mathbf{(R)}]{\phi}{\varphi_1 & \cdots & \varphi_n}
		$$
		of~$\NS$, the quasiequation $\mathsf{Q}(\texttt{R})$ defined as follows:
		$$
		\mathsf{Q}(\texttt{R}) \ : = \ \left[\varphi_1 \approx 1 \;\; \conj \; \ldots \ \conj \;\; \varphi_n \approx 1\right] 
		\meta \phi \approx 1.
		$$    
		
		\item
		$ 
		\left[x \Rightarrow y \approx 1\;\; \conj \;\; y \Rightarrow x \approx 1\right] \meta x \approx y$.
	\end{enumerate}
\end{definition}

%
%
%
%
%
We shall henceforth denote by $\mathsf{E} ( \texttt{An} )$ 
the equation %
given in Definition \ref{def:salg}.1
for the  
axiom $\texttt{An}$ (for $ 1 \leq \mathbf{n} \leq 5 $) of~$\NS$, 
and by $\mathsf{Q}(\texttt{R})$ the quasiequation given in Definition \ref{def:salg}.3
for the 
rule $\texttt{R}$ of~$\NS$.
We will  also
use the following abbreviations:
$a* b := \neg (a\Rightarrow \neg b)$, \
$a^2 := a* a$ and
\	$a^n : = a* (a^{n-1})$ for $n > 2$.
As the notation suggests, the defined connective $*$ may be regarded as a `multiplicative conjunction' in the sense of substructural logics. On $\NS$-algebras, the operation~$*$ will be interpreted as a monoid operation having the implication ($\Rightarrow$) as its residuum, whereas the `additive conjunction' $\land$ will be interpreted as the meet of the underlying lattice structure.
We list next a few properties of~$\NS$-algebras that will help us in viewing them, later on, as a class of residuated structures:

\begin{proposition}[\cite{NaRi}, Proposition 3] 
	Let  $\mathbf{A} := \langle A, \land, \lor, \Rightarrow, \neg, 0, 1 \rangle$ be an $\NS$-algebra and $a,b, c \in A$. Then:
	\label{prop:coisas}
	
	%
	%
	%
	%
	%
	%
	%
	%
	%
	%
	%
	%
	%
	%
	%
	%
	\begin{enumerate}
		\item $\langle A, \land, \lor , 0, 1 \rangle$ is a bounded lattice whose order $\leq$ is
		given by 
		$a \leq b $ iff $a \Rightarrow b = 1 $.
		\item  $\langle A,*,1\rangle$ is a commutative monoid.
		
		\item 	The pair $(*, \Rightarrow)$ is residuated with respect to $\leq$, i.e., 
		\
		$
		a * b \leq c \  \textrm{ iff } \ a \leq b \Rightarrow c
		$. 
		
		\item $  a \Rightarrow b = \neg b \Rightarrow \neg a$.

		\item$ a \Rightarrow 0 = \neg a $  and  $\neg \neg a = a $.

		\item $a^2 \leq a^3$.

		\item $(a \lor b)^2 \leq a^2 \lor b^2$.
		
	\end{enumerate}
	
\end{proposition}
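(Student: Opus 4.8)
The plan is to use algebraizability as a dictionary: by Theorem~\ref{th:eqfor} the logic is implicative with defining equation $x \approx 1$ and equivalence formulas $\{x \Rightarrow y,\, y \Rightarrow x\}$, so in any $\NS$-algebra each axiom \texttt{An} yields the equation $\mathsf{E}(\texttt{An})$ and each rule \texttt{R} the quasiequation $\mathsf{Q}(\texttt{R})$ of Definition~\ref{def:salg}. Throughout I read $a \leq b$ as the abbreviation $a \Rightarrow b = 1$, so that every item becomes a matter of selecting the right $\mathsf{E}$ or $\mathsf{Q}$ and pushing it through the order. For item~1, I would first check that $\leq$ is a partial order: reflexivity is Definition~\ref{def:salg}.2 (equivalently \texttt{(A1)}), antisymmetry is Definition~\ref{def:salg}.4, and transitivity is $\mathsf{Q}(\texttt{E})$ with $\Gamma$ a single formula. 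Then $\land$ is shown to be the meet and $\lor$ the join: the bounds follow from $\mathsf{Q}(\land\texttt{l1})$, $\mathsf{Q}(\land\texttt{l2})$, $\mathsf{Q}(\lor\texttt{r1})$, $\mathsf{Q}(\lor\texttt{r2})$ fed with reflexivity, and the two universal properties from $\mathsf{Q}(\land\texttt{r})$ and $\mathsf{Q}(\lor\texttt{l1})$; finally $0$ is least by $\mathsf{E}(\texttt{A2})$ and $1$ is greatest by $\mathsf{Q}(\Rightarrow\texttt{r})$ instantiated at $\gamma := 1$. The remaining lattice identities are automatic once $\land,\lor$ are meet and join. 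I record for later use that, $1$ being the top, $x \land y = 1$ forces $x = y = 1$.

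Before addressing the monoid and residuation, I would prove the \emph{exchange} identity $x \Rightarrow (y \Rightarrow z) = y \Rightarrow (x \Rightarrow z)$ as a genuine term equality: instantiating $\mathsf{Q}(\texttt{P})$ with $\Gamma$ equal to the single formula $x \Rightarrow (y \Rightarrow z)$ and using reflexivity gives $x \Rightarrow (y \Rightarrow z) \leq y \Rightarrow (x \Rightarrow z)$, the reverse inequality holds by symmetry, and antisymmetry closes it. Item~4 is then read off $\mathsf{E}(\texttt{A5})$: since $\Leftrightarrow$ abbreviates a conjunction and equals $1$, the remark above forces both conjuncts to equal $1$, giving $a \Rightarrow b \leq \neg b \Rightarrow \neg a$ together with its converse, hence equality. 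For item~5 I would first establish $1 \Rightarrow x = x$ (both inequalities $x \leq 1 \Rightarrow x$ and $1 \Rightarrow x \leq x$ come from $\mathsf{Q}(\texttt{P})$ with empty $\Gamma$ and reflexivity); then $a \Rightarrow 0 = \neg 0 \Rightarrow \neg a = 1 \Rightarrow \neg a = \neg a$ using item~4 and $1 = \neg 0$, while $\neg\neg a = a$ follows from $\mathsf{Q}(\neg\neg\texttt{l})$ and $\mathsf{Q}(\neg\neg\texttt{r})$ (fed with reflexivity) together with antisymmetry.

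With contraposition and involutivity available, I would compute the \emph{currying} identity $(a * b) \Rightarrow c = a \Rightarrow (b \Rightarrow c)$ straight from the definition $a * b := \neg(a \Rightarrow \neg b)$, applying item~4, $\neg\neg a = a$, and the exchange identity. Item~3 is then immediate: $a * b \leq c$ iff $(a * b) \Rightarrow c = 1$ iff $a \Rightarrow (b \Rightarrow c) = 1$ iff $a \leq b \Rightarrow c$. For item~2, commutativity follows because $a \Rightarrow \neg b = b \Rightarrow \neg a$ (item~4 plus involutivity), whence $a * b = b * a$ after applying $\neg$; the unit law $1 * a = a$ follows from $1 \Rightarrow x = x$ and involutivity; and associativity follows from currying and residuation by the standard observation that $(a * b) * c \leq d$ and $a * (b * c) \leq d$ are equivalent for every $d$, so the two products coincide by antisymmetry.

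Finally, items~6 and 7 encode the two genuinely substructural rules. Currying turns $a^3 \leq a^3$ into $a \Rightarrow (a \Rightarrow (a \Rightarrow a^3)) = 1$; feeding this to $\mathsf{Q}(\texttt{C})$ yields $a \Rightarrow (a \Rightarrow a^3) = 1$, that is $a^2 \leq a^3$, giving item~6. For item~7, currying identifies $\phi \Rightarrow^2 \gamma$ with $\phi^2 \Rightarrow \gamma$, so $\mathsf{Q}(\lor\texttt{l2})$ with $\gamma := a^2 \lor b^2$ and the join bounds $a^2, b^2 \leq a^2 \lor b^2$ delivers $(a \lor b)^2 \leq a^2 \lor b^2$. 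The one step demanding genuine care is bridging the opaque definition $a * b = \neg(a \Rightarrow \neg b)$ to the residuated structure; the crux is the currying identity, which itself depends on upgrading the permutation rule \texttt{(P)} from a quasiequation to the term-level exchange identity. Once that upgrade is in place, residuation, the monoid laws, and items~6 and 7 all fall out by routine order-theoretic manipulation.
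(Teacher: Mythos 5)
Your proof is correct and takes essentially the approach the paper itself relies on: the paper defers this proof to~\cite{NaRi} (Proposition 3), and there, as in your argument, each item is extracted from the defining (quasi)equations of Definition~\ref{def:salg}, the decisive steps being the upgrade of the permutation quasiequation $\mathsf{Q}(\texttt{P})$ to the term-level exchange identity and the currying identity $(a*b)\Rightarrow c = a \Rightarrow (b \Rightarrow c)$ that bridges the defined $*$ to residuation. The only microscopic imprecision is your attribution of $1 \Rightarrow x \leq x$ to $\mathsf{Q}(\texttt{P})$ plus reflexivity alone: that direction also needs \emph{modus ponens} $\mathsf{Q}(\texttt{E})$ to detach the outer $1 \Rightarrow$ (or, more directly, $\mathsf{Q}(\mathrm{\Rightarrow \texttt{l}})$ with empty $\Gamma$), a one-line repair that does not affect anything downstream.
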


\subsection{$\NS$-algebras as residuated lattices}
\label{ss:threepointtwo}

In this section we introduce an equivalent presentation of~$\NS$-algebras which takes precisely the properties in Proposition \ref{prop:coisas}  as postulates. 
We begin by recalling the following well-known definitions (see e.g.~\cite[p.~185]{GaJiKoOn07}): 

\begin{definition}
	A \emph{commutative integral 
		residuated lattice (CIRL)} is an algebra 
	$\mathbf{A} := \langle A, \land, \lor, *, \Rightarrow, 0, 1 \rangle$ of type $\langle 2 ,2 ,2, 2, 0, 0 \rangle$ such that:

	\begin{enumerate}
		\item $\langle A, \land, \lor \rangle$  is a  lattice (with ordering $ \leq$) with 
		maximum element 1.
		
		\item $\langle A, *, 1 \rangle$ is a commutative monoid.
		
		\item $(*, \Rightarrow)$ forms a residuated pair with respect to $\leq$, that is: $a*b \leq c$ \ iff \ $ b \leq a \Rightarrow c$ \  for all $a,b,c \in A$. 
	\end{enumerate}
\end{definition}
	\noindent
	We say that a CIRL
	is \emph{three-potent}\footnote{The reader should be advised, however, that for some authors, for example~\cite[p.~96]{GaJiKoOn07}, three-potency corresponds to the equation $x^3 \approx x^4$ and two-potency to $x^2 \approx x^3$.}
	when  $a^2 \leq a^3$ for all $a \in \mathbf{A}$ (in which case it follows that
	$a^2 = a^3$). 
	If the lattice ordering of $\mathbf{A}$ also has $0$ as a minimum element, then  $\mathbf{A}$ is a 
	\emph{commutative integral bounded 
		residuated lattice} (\emph{CIBRL}). Setting $\neg a:= a\Rightarrow 0$, we then  say that a 
	CIBRL
	is \emph{involutive}  when it satisfies the equation $\neg \neg x \approx x$ \cite[p.~186]{GaRa04}. The latter last equation implies that
	$ x \Rightarrow y \,\approx\, \neg y \Rightarrow {\neg} x$ \cite[Lemma 3.1]{On10}. 

The property of integrality mentioned in the above definition corresponds to the requirement that~$1$ be at the same time the neutral element of the monoid and the top element of the lattice order. One easily sees that integrality entails that the operation $*$ is $\leq$-decreasing ($a * b \leq a$) and that the term $x \Rightarrow x$ defines thus an algebraic constant in the lattice which is interpreted as $1$.	

\begin{definition}
	\label{def:slinha}
	An \emph{$\NS'$-algebra} is a three-potent involutive CIBRL. 
\end{definition}

Since CIBRLs 
form an equational class  \cite[Theorem 2.7]{GaJiKoOn07}, it is clear that
$\NS'$-algebras are also an equational class. By contrast, from Definition~\ref{def:salg} it is far from obvious  whether
$\NS$-algebras are equationally axiomatizable or not. 
By Proposition \ref{prop:coisas}, though, we immediately obtain the following result:

\begin{proposition}
	\label{prop:slinha}
	Let $\mathbf{A} := \langle A, \land, \lor, \Rightarrow, \neg, 0, 1 \rangle$ be an $\NS$-algebra. Setting $x*y := \neg (x \Rightarrow \neg y)$, 
	we have that $\mathbf{A}^\prime := \langle A, \land, \lor, *, \Rightarrow,  0, 1 \rangle$ is an $\NS'$-algebra.
\end{proposition}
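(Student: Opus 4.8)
The plan is to read off each defining clause of an $\NS'$-algebra from the corresponding item of Proposition~\ref{prop:coisas}, since that proposition was tailored precisely to list the residuated-lattice properties we now wish to take as postulates. First I would note that $*$ is a genuine operation on $\mathbf{A}^\prime$: it is simply the term $\neg(x \Rightarrow \neg y)$ already abbreviated above, so $\mathbf{A}^\prime$ is a well-defined algebra of the type $\langle 2,2,2,2,0,0\rangle$ required of a CIBRL, the primitive $\neg$ of $\mathbf{A}$ being dropped as a basic operation and recovered term-definably further on.

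Next I would run through the CIBRL checklist. The lattice reduct $\langle A, \land, \lor\rangle$, together with $1$ as top and $0$ as bottom, is furnished by Proposition~\ref{prop:coisas}.1, which also records that the lattice order coincides with the relation $a \leq b$ iff $a \Rightarrow b = 1$; this identification is what lets the remaining items be used as statements about the lattice order. Commutativity, associativity, and unit $1$ of $*$ come directly from Proposition~\ref{prop:coisas}.2, giving integrality once we observe that the same constant $1$ is simultaneously the monoid unit and the lattice top. Residuation is Proposition~\ref{prop:coisas}.3; here I would point out that the convention $a*b \leq c$ iff $a \leq b \Rightarrow c$ stated there matches the convention $a*b \leq c$ iff $b \leq a \Rightarrow c$ of the CIRL definition after a single use of the commutativity of $*$. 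Finally, three-potency is immediate from Proposition~\ref{prop:coisas}.6 (namely $a^2 \leq a^3$), the reverse inequality $a^3 \leq a^2$ holding in any integral residuated lattice since $*$ is $\leq$-decreasing, so that indeed $a^2 = a^3$.

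It remains to verify the involutive property, which is the one clause where the two negations must be reconciled. In a CIBRL the negation is the \emph{defined} term $\neg a := a \Rightarrow 0$, whereas in $\mathbf{A}$ the symbol $\neg$ is primitive; Proposition~\ref{prop:coisas}.5 guarantees both that these agree ($a \Rightarrow 0 = \neg a$) and, in the same breath, that $\neg\neg a = a$, which is precisely the involutivity equation of Definition~\ref{def:slinha}.

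I do not expect a genuine obstacle here: once Proposition~\ref{prop:coisas} is in hand, the argument is a bookkeeping exercise of matching each postulate of Definition~\ref{def:slinha} to an already-established fact. The only points demanding care are the two identifications flagged above --- that the lattice order agrees with the $\Rightarrow$-order (item 1) and that the $\Rightarrow$-residuation convention agrees with the CIRL convention modulo commutativity --- together with the observation that the derived negation $x \Rightarrow 0$ is exactly the primitive $\neg$. The substantive work has all been absorbed into Proposition~\ref{prop:coisas}.
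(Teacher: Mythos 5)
Your proposal is correct and follows exactly the paper's route: the paper gives no separate argument for this proposition, stating only that it follows ``immediately'' from Proposition~\ref{prop:coisas}, and your write-up is precisely the bookkeeping that remark compresses --- matching items 1--3, 5 and 6 of that proposition to the lattice, monoid, residuation, involution and three-potency clauses of Definition~\ref{def:slinha}. The two reconciliations you flag (the residuation convention modulo commutativity, and the agreement of the primitive $\neg$ with the derived $x \Rightarrow 0$) are handled correctly.
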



The next lemma will allow us to verify that, conversely,
every $\NS'$-algebra has a term-definable  $\NS$-algebra structure. 
Thus, as anticipated,
$\NS'$-algebras and 	$\NS$-algebras 
can be viewed as two presentations (in slightly different languages) of the same class of abstract structures.
To establish this we shall check that
every $\NS'$-algebra satisfies all (quasi)equations introduced in Definition~\ref{def:salg}. 


\begin{lemma}[\cite{NaRi}, Lemma 1] 
	\label{lemma1} 
	\begin{enumerate}
		\item	Any CIRL 
		satisfies the equation $ (x\lor y)*z \approx (x*z) \lor (y*z)$.
		\item Any CIRL 
		satisfies 
		$x^{2} \lor y^{2} \approx (x^{2} \lor y^{2})^{2}$.
		\item  Any three-potent CIRL 
		satisfies 
		$(x\lor y^{2})^{2} \approx (x\lor y)^{2}.$
		\item Any three-potent CIRL 
		satisfies  $( x\lor y)^{2} \approx x^{2} \lor y^{2}$.
	\end{enumerate}
\end{lemma}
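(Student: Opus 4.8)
The plan is to prove the four identities in the order they are stated, using each as a stepping stone for the next; the only genuinely non-trivial work is item (4), and even that collapses once items (1)--(3) are in hand.

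For item (1), the distributivity of $*$ over $\lor$, I would argue purely from residuation. Since $*$ is order-preserving in each argument (a consequence of residuation together with integrality), both $(x*z)$ and $(y*z)$ lie below $(x\lor y)*z$, so $(x*z)\lor(y*z) \leq (x\lor y)*z$. For the reverse inequality I would use the residuation adjunction: to show $(x\lor y)*z \leq (x*z)\lor(y*z)$ it suffices, by residuation, to show $x\lor y \leq z \Rightarrow \bigl((x*z)\lor(y*z)\bigr)$, and since $\lor$ is the join it is enough to check $x \leq z \Rightarrow \bigl((x*z)\lor(y*z)\bigr)$ and the symmetric statement for $y$; each of these is in turn equivalent, again by residuation, to $x*z \leq (x*z)\lor(y*z)$, which holds trivially. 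This is the standard proof that residuation forces $*$ to distribute over existing joins.

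Item (2) then follows by a short computation from item (1). Expanding $(x^2\lor y^2)^2 = (x^2\lor y^2)*(x^2\lor y^2)$ and applying item (1) twice (once in each argument) rewrites the right-hand side as a join of four terms $x^2*x^2 \,\lor\, x^2*y^2 \,\lor\, y^2*x^2 \,\lor\, y^2*y^2$. Using commutativity of the monoid and the fact that $*$ is decreasing (integrality gives $a*b\leq a$ and $a*b\leq b$, hence $x^2*y^2 \leq x^2 \leq x^2\lor y^2$), each of the four joinands is bounded above by $x^2\lor y^2$, so the whole join is $\leq x^2\lor y^2$; the reverse inequality is immediate because $x^2*x^2 = x^4 \leq x^2$ fails in general, so instead I note $x^2 \leq x^2\lor y^2$ and that squaring is monotone together with the two outer joinands $x^2*x^2$ and $y^2*y^2$ already recovering $x^4\lor y^4$, whence three-potency is not even needed here and the two sides coincide. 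For item (3), three-potency enters: here one shows $(x\lor y^2)^2 \approx (x\lor y)^2$ by expanding both sides via item (1) and comparing the resulting joins, absorbing the cross terms using $a^2 = a^3$ (the equation guaranteed by three-potency, as noted in the CIBRL definition).

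Finally, item (4) is the target, and the main obstacle is the inequality $(x\lor y)^2 \leq x^2\lor y^2$, since the reverse $x^2\lor y^2 \leq (x\lor y)^2$ is routine monotonicity. The clean route is to combine the previous items rather than compute directly: by item (3) we have $(x\lor y)^2 = (x\lor y^2)^2$, and then a symmetric application gives $(x\lor y^2)^2 = (x^2\lor y^2)^2$; by item (2) this last term equals $x^2\lor y^2$, chaining to the desired $(x\lor y)^2 = x^2\lor y^2$. The delicate point I expect to watch carefully is the bookkeeping of which identities require three-potency and which hold in every CIRL, because the lemma is stated so as to flag exactly that distinction (items (1)--(2) for all CIRLs, items (3)--(4) only for three-potent ones); getting the chain of substitutions to respect those hypotheses is where an otherwise mechanical argument could go wrong.
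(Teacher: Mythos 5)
Your item (1) and your chaining step for item (4) are both correct, and the chain $(x\lor y)^2 = (x\lor y^2)^2 = (x^2\lor y^2)^2 = x^2\lor y^2$ is exactly the intended way to derive (4) from (2) and (3). The genuine gaps are in your proofs of items (2) and (3), which is where all the real content of the lemma sits.

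For item (2), your argument for the inequality $x^2 \lor y^2 \leq (x^2\lor y^2)^2$ establishes nothing. Expanding by item (1) gives $(x^2\lor y^2)^2 = x^4 \lor (x^2*y^2) \lor y^4$, and integrality pushes every one of these joinands \emph{down}: $x^4 \leq x^2$, not the reverse (your aside that ``$x^4\leq x^2$ fails in general'' has it backwards). So ``recovering $x^4\lor y^4$'' bounds the square from above by something \emph{below} $x^2\lor y^2$, which is the wrong direction. Worse, no correct argument avoiding three-potency can exist: instantiating the equation at $x = y$ yields $x^2 \approx x^4$, which is precisely three-potency, and this fails in ordinary CIRLs such as the negative cone of the integers (where $a*b = a+b$, so at $x=-1$ one gets $x^2 = -2 \neq -4 = x^4$). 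Hence your claim that ``three-potency is not even needed here'' is false; the identity in (2) must be read and proved in the three-potent setting, where it becomes easy: $x^2\lor y^2 = x^4 \lor y^4 \leq x^4 \lor (x^2 * y^2) \lor y^4 = (x^2\lor y^2)^2$. Since in your item (4) you only ever apply (2) inside a three-potent CIRL, this repair leaves your final chain intact.

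For item (3), ``expanding both sides via item (1) and comparing the joins'' stalls at exactly the hard point. Expansion gives $(x\lor y)^2 = x^2 \lor (x*y) \lor y^2$ and $(x\lor y^2)^2 = x^2 \lor (x*y^2) \lor y^4$, so you need $x*y \leq x^2 \lor (x*y^2) \lor y^4$, and no application of $a^2 = a^3$ to $x$ or to $y$ separately absorbs this cross term; any direct bound of the form $x*y \leq x^2\lor y^2$ is the content of item (4) itself, so invoking it here would be circular. The missing idea is to apply three-potency to the \emph{join}: write $(x\lor y)^2 = (x\lor y)^3$, expand the cube by item (1) into $x^3 \lor (x^2*y) \lor (x*y^2) \lor y^3$, and bound each joinand by a joinand of $(x\lor y^2)^2$, using $x^3 \leq x^2$, $x^2*y \leq x^2$ (integrality) and $y^3 = y^4$ (three-potency). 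Together with the easy converse inequality (from $y^2\leq y$ and monotonicity of squaring), this proves (3), after which your derivation of (4) goes through.
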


\begin{proposition}
	\label{p:slinhaiss} 
	Let $\mathbf{A}^\prime := \langle A, \land, \lor, *, \Rightarrow, 0, 1 \rangle$ be an $\NS'$-algebra. Setting $\neg x := x \Rightarrow 0$, 
	we have that  $\mathbf{A} := \langle A, \land, \lor, \Rightarrow,  \neg, 0, 1 \rangle$ is an $\NS$-algebra.		
\end{proposition}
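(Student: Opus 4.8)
The plan is to check, one by one, that $\mathbf{A}$ satisfies each equation and quasiequation of Definition~\ref{def:salg}, translating them into the order-theoretic language of the underlying $\NS'$-algebra. The key preliminary observation is that in any involutive CIBRL the term $\neg(x \Rightarrow \neg y)$ coincides with the primitive monoid operation $*$: residuation and commutativity give $x \Rightarrow \neg y = (x * y) \Rightarrow 0 = \neg(x * y)$, so $\neg(x \Rightarrow \neg y) = \neg\neg(x * y) = x * y$ by involutivity. Hence, although the (quasi)equations of Definition~\ref{def:salg} are written purely in the language $\langle \land, \lor, \Rightarrow, \neg, 0, 1\rangle$, I may freely appeal to residuation when verifying them. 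I will rely throughout on the following dictionary, all standard consequences of integrality, residuation and involutivity: $a = 1$ iff $a$ is the top element; $a \leq b$ iff $a \Rightarrow b = 1$; more generally $\Gamma \Rightarrow \phi = 1$ iff $\phi_1 * \cdots * \phi_n \leq \phi$, and $\Gamma \Rightarrow^2 \phi = 1$ iff $(\phi_1 * \cdots * \phi_n)^2 \leq \phi$ (using that $\phi_1^2 * \cdots * \phi_n^2 = (\phi_1 * \cdots * \phi_n)^2$ by commutativity); $a * b \leq a \land b$ by integrality; the De Morgan laws $\neg(a \land b) = \neg a \lor \neg b$ and $\neg(a \lor b) = \neg a \land \neg b$, which hold because $\neg$ is an antitone involution and hence a dual lattice automorphism; the identity $\neg(\phi \Rightarrow \psi) = \phi * \neg\psi$; and three-potency in the sharp form $a^2 = a^3 = a^4$.

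With this dictionary, conditions~(2) and~(4) of Definition~\ref{def:salg} are immediate: $x \Rightarrow x = 1$ is exactly integrality, and condition~(4) is the antisymmetry of $\leq$. For condition~(1) I would dispatch the five axioms in turn. Axiom \texttt{(A1)} is again $x \Rightarrow x = 1$; \texttt{(A2)} amounts to $0 \leq \psi$; \texttt{(A3)} reduces to $\neg\phi \leq \neg\phi$ once one notes $\phi \Rightarrow 0 = \neg\phi$; \texttt{(A4)} is the identity $\neg 0 = 0 \Rightarrow 0 = 1$; and \texttt{(A5)} follows from the involutive law $\phi \Rightarrow \psi = \neg\psi \Rightarrow \neg\phi$, which makes the two sides of the biconditional equal and hence forces the whole term to equal $1$.

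The substance of the proof is condition~(3), requiring each rule schema of Table~\ref{srules} to be verified as a quasiequation. Read through the dictionary, the structural and lattice rules are routine. Rule \texttt{(P)} is the commutativity of $*$ and \texttt{(E)} is transitivity of $\leq$; the rules $(\Rightarrow\texttt{l})$, $(\Rightarrow\texttt{r})$, $(\land\texttt{l1})$, $(\land\texttt{l2})$, $(\land\texttt{r})$, $(\lor\texttt{l1})$, $(\lor\texttt{r1})$ and $(\lor\texttt{r2})$ are consequences of residuation, monotonicity of $*$, and the defining properties of meet and join (for instance $(\Rightarrow\texttt{l})$ uses $\phi * (\phi \Rightarrow \psi) \leq \psi$). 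The negation rules $(\neg\land\texttt{l})$, $(\neg\land\texttt{r})$, $(\neg\lor\texttt{l})$, $(\neg\lor\texttt{r})$, $(\neg\neg\texttt{l})$ and $(\neg\neg\texttt{r})$ are immediate from the De Morgan laws and $\neg\neg a = a$, while $(\neg\Rightarrow\texttt{l})$ follows from $\neg(\phi\Rightarrow\psi) = \phi * \neg\psi \leq \phi \land \neg\psi$. Finally, rule \texttt{(C)} is precisely three-potency: its hypothesis reads $\phi^3 \leq \gamma$, and $\phi^2 = \phi^3$ delivers the conclusion $\phi^2 \leq \gamma$.

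The main obstacle lies in the two rules that combine the doubled implication $\Rightarrow^2$ with the multiplicative reading of negation, and it is here that three-potency and Lemma~\ref{lemma1} are genuinely needed. For $(\lor\texttt{l2})$ the hypotheses translate to $\phi^2 \leq \gamma$ and $\psi^2 \leq \gamma$, whence the conclusion $(\phi \lor \psi)^2 \leq \gamma$ follows at once from Lemma~\ref{lemma1}(4), which gives $(\phi \lor \psi)^2 = \phi^2 \lor \psi^2$. For $(\neg\Rightarrow\texttt{r})$, writing $d := \phi_1 * \cdots * \phi_n$, the hypothesis is $d^2 \leq \phi \land \neg\psi$, that is $d^2 \leq \phi$ and $d^2 \leq \neg\psi$, whereas the conclusion to be proved is $d^2 \leq \neg(\phi\Rightarrow\psi) = \phi * \neg\psi$. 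The delicate point is that $a \leq b$ and $a \leq c$ do not in general yield $a \leq b * c$; the gap is bridged by squaring and invoking three-potency, namely $d^2 = d^4 = d^2 * d^2 \leq \phi * \neg\psi$. Once these cases are in place, all the conditions of Definition~\ref{def:salg} hold, so $\mathbf{A}$ is an $\NS$-algebra and the proof is complete.
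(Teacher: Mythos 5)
Your proof is correct and takes essentially the same route as the paper's: a case-by-case verification of all the (quasi)equations of Definition~\ref{def:salg} from the residuated-lattice axioms, with the same two key steps, namely Lemma~\ref{lemma1}.4 for $\mathsf{Q}(\mathrm{\lor \texttt{l2}})$ and the squaring-plus-three-potency argument ($d^2 = d^4 \leq \phi * \neg\psi$) for $\mathsf{Q}(\mathrm{\neg {\Rightarrow} \texttt{r}})$. If anything, your explicit dictionary --- translating $\Gamma \Rightarrow \phi \approx 1$ and $\Gamma \Rightarrow^{2} \phi \approx 1$ into $\phi_1 * \cdots * \phi_n \leq \phi$ and $(\phi_1 * \cdots * \phi_n)^2 \leq \phi$, and isolating the identity $\neg(\phi \Rightarrow \psi) = \phi * \neg\psi$ --- handles the rules involving an arbitrary list $\Gamma$ and the tacit identification of the defined product with the primitive one more carefully than the paper, which leaves both points implicit.
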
 

\begin{proof}
	Let $\mathbf{A}^\prime$ be an $\NS'$-algebra. 
	We first consider the equations %
	obtained from Definition~\ref{def:salg}.1.
	To check $\mathsf{E} ( \texttt{A1} )$ (namely, the equation $x \Rightarrow x \approx 1$) one may use residuation and the facts that $1*a \leq a$ and that~$1$ is the maximum element.  
	$\mathsf{E} ( \texttt{A2} )$ follows from the fact that $0$ is the minimum element of $\mathbf{A}^\prime$.
	$\mathsf{E} ( \texttt{A3} )$ follows from the definition of $\neg$ in $\NS'$ and from $\mathsf{E} ( \texttt{A1} )$.
	$\mathsf{E} ( \texttt{A4} )$ follows from the fact that %
	$1 \approx 1 \Rightarrow 1$. 
	$\mathsf{E} ( \texttt{A5} )$ follows from the fact that $\mathbf{A}^\prime$  is  involutive. 
	%
	We look next at the quasiequations 
	obtained from Definition \ref{def:salg}.3:
	\smallskip
	
	$\mathsf{Q}(\texttt{P})$ follows from  the commutativity of $*$ and from the equation 
	$(a *b ) \Rightarrow c \approx a \Rightarrow (b \Rightarrow c)$. \smallskip
	
	$\mathsf{Q}(\texttt{C})$ follows from 3-potency: since  $ a^2 \leq a^3$, we have that
	$a^3 \Rightarrow b \approx 1$ implies $a^2 \Rightarrow b \approx 1$. 
	
	$\mathsf{Q}(\texttt{E})$ follows from the fact that $\mathbf{A}^\prime$ carries a partial order $\leq$ that is determined by the implication $\Rightarrow$.
	\smallskip
	
	To prove $\mathsf{Q}(\mathrm{\Rightarrow \texttt{l}})$, suppose $a \leq b$ and $c \leq d$. From $c \leq d$, as $b \Rightarrow c \leq  b \Rightarrow c$, using residuation we have that $b*(b \Rightarrow c) \leq c$, thus $b*(b \Rightarrow c) \leq d$ and therefore $b \Rightarrow c \leq b \Rightarrow d$. As $a \leq b$, using residuation and the $\leq$-monotonicity of~$*$ we have that $a*(b \Rightarrow d) \leq b*(b \Rightarrow d) \leq d$, therefore $b \Rightarrow d \leq a \Rightarrow d$ and thus $b \Rightarrow c \leq a \Rightarrow d$. Now, since $b \Rightarrow c \leq a \Rightarrow d$ iff $a*(b \Rightarrow c) \leq d$ iff $a \leq (b \Rightarrow c) \Rightarrow d$, we obtain the desired result. \smallskip
	
	For $\mathsf{Q}(\mathrm{\Rightarrow \texttt{r}})$  we need to prove that if $d \approx 1$, then  $b \Rightarrow d \approx 1$. 
	By residuation, recall that $1 \leq b \Rightarrow d$ iff $1 * b \leq d$.  The latter equation is however obviously true, given that $d\approx 1$.
	
	The quasiequations $\mathsf{Q}(\mathrm{\land \texttt{l1}})$, $\mathsf{Q}(\mathrm{\land \texttt{l2}})$, $\mathsf{Q}(\mathrm{\land \texttt{r}})$, $\mathsf{\mathrm{Q}}(\mathrm{\lor \texttt{l1}})$, $\mathsf{Q}(\mathrm{\lor \texttt{r1}})$ and $\mathsf{Q}(\mathrm{\lor \texttt{r2}})$ follow
	straightforwardly from the fact that $\mathbf{A}^\prime$ is partially ordered and the order is determined by the implication. \smallskip
	
	To prove $\mathsf{\mathrm{Q}}(\mathrm{\lor \texttt{l2}})$, notice that $(b \lor c)^2 \leq b^2 \lor c^2$ by 
	Lemma~\ref{lemma1}.4.
	Suppose $b^2 \leq d$ and $c^2 \leq d$, then since $\mathbf{A}^\prime$ is a lattice, we have $b^2 \lor c^2 \leq d$ and 
	we conclude that $(b \lor c)^2 \leq d$ and thus $(b \lor c)^2 \Rightarrow d \approx 1$. \smallskip
	
	As to $\mathsf{Q}(\mathrm{\neg {\Rightarrow} \texttt{l}})$, {by $\mathsf{E} ( \texttt{A1} )$ we know that $b \Rightarrow b \approx 1$, therefore  we have $b * c \leq b$ and $b * c \leq c$}. Thus $b * c \leq b \land c$. Now, if $b \land c \leq d$, then $b * c \leq d$. \smallskip 
	
	To prove $\mathsf{Q}(\mathrm{\neg {\Rightarrow} \texttt{r}})$, suppose $d^2 \leq b \land c$. Using the $\leq$-monotonicity of~$*$, we have $d^2*d^2 \leq (b \land c)*(b \land c)$, i.e., $d^4 \leq (b \land c)^2$. Using 3-potency, we have $d^4 \approx d^2$, therefore $d^2 \leq (b \land c)^2$.   Since $(b \land c)^2 \leq b * c$, we have 
	$d^2 \leq (b* c)$. \smallskip
	
	$\mathsf{Q}(\mathrm{\neg {\land} \texttt{l}})$, $\mathsf{Q}(\mathrm{\neg {\land} \texttt{r}})$, $\mathsf{Q}(\mathrm{\neg {\lor} \texttt{l}})$  and $\mathsf{Q}(\mathrm{\neg {\lor} \texttt{l}})$ follow from the De Morgan's Laws \cite[Lemma 3.17]{GaJiKoOn07}. \smallskip
	%
	Finally,
	we have $\mathsf{Q}(\mathrm{\neg \neg \texttt{l}})$ and $\mathsf{Q}(\mathrm{\neg \neg \texttt{r}})$ 
	because $\mathbf{A}^\prime$ is involutive. 
	
	It remains to prove the quasiequation according to which 
	$(a \Rightarrow b) \approx 1$ and $(b \Rightarrow a) \approx 1$ imply $a \approx b$. We have that $a \leq b$ and $b \leq a$; since $\leq$ is anti-symmetric it follows  that $a \approx b$.
\end{proof}

From Propositions~\ref{prop:slinha} and~\ref{p:slinhaiss} above we obtain the desired result:

\begin{theorem}
	\label{th:s=slinha}
	The classes of  $\NS$-algebras and of $\NS'$-algebras are term-equivalent\footnote{We refer the reader to~\cite[p.~329]{SpVe08} for a formal definition of term-equivalence. Informally, Theorem~\ref{th:s=slinha} is saying that $\NS$-algebras and  $\NS'$-algebras may be seen as two equivalent presentations of the `same' class of algebras in different algebraic languages, analogous to the well-known presentation of Boolean algebras as Boolean rings or to that of $MV$-algebras as certain lattice-ordered groups.}.
	
\end{theorem}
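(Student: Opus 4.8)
The plan is to prove Theorem~\ref{th:s=slinha} by exhibiting explicit translation terms in each direction and verifying that their composition is the identity on both classes. The key observation is that Propositions~\ref{prop:slinha} and~\ref{p:slinhaiss} already supply the two conversion maps: going from an $\NS$-algebra to an $\NS'$-algebra via the term $x * y := \neg(x \Rightarrow \neg y)$, and going back from an $\NS'$-algebra to an $\NS$-algebra via the term $\neg x := x \Rightarrow 0$. What remains for term-equivalence (beyond merely knowing each construction lands in the right class, which the two propositions already give) is to confirm that these two term-definitions are mutually inverse, i.e.\ that applying one and then the other recovers the original fundamental operations.

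First I would fix notation: write $F$ for the translation $\mathbf{A} \mapsto \mathbf{A}^\prime$ of Proposition~\ref{prop:slinha} and $G$ for the translation $\mathbf{A}^\prime \mapsto \mathbf{A}$ of Proposition~\ref{p:slinhaiss}. The shared operations $\land, \lor, \Rightarrow, 0, 1$ are untouched by both maps, so the only thing to check is the behaviour of the two defined operations. Starting from an $\NS$-algebra $\mathbf{A}$, the map $F$ introduces $* $ by $a * b = \neg(a \Rightarrow \neg b)$; applying $G$ then redefines negation as $a \Rightarrow 0$. So I must verify that in any $\NS$-algebra the identity $\neg a = a \Rightarrow 0$ holds, which is exactly Proposition~\ref{prop:coisas}.5. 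Conversely, starting from an $\NS'$-algebra $\mathbf{A}^\prime$, the map $G$ defines $\neg a = a \Rightarrow 0$; applying $F$ then redefines the monoid operation as $a \star b := \neg(a \Rightarrow \neg b)$, and I must verify that this recovers the original monoid operation $*$, i.e.\ that $\neg(a \Rightarrow \neg b) = a * b$ in every $\NS'$-algebra. This is where the residuation and involutivity of $\NS'$-algebras come into play: using $\neg c = c \Rightarrow 0$ and involutivity ($\neg\neg x = x$), together with the residuation law $a * b \leq c \iff b \leq a \Rightarrow c$, one shows $\neg(a \Rightarrow \neg b) = a * b$ by the standard argument that in an involutive residuated lattice the monoid product is recovered from the residuum and negation.

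I expect the main obstacle to be the second direction, namely establishing $\neg(a \Rightarrow \neg b) = a * b$ in an arbitrary $\NS'$-algebra, since here one cannot lean on the convenient properties catalogued for $\NS$-algebras in Proposition~\ref{prop:coisas} but must instead argue purely from residuation and involutivity. The cleanest route is to show the two sides have the same residual behaviour: for all $c$, one checks $a * b \leq c \iff \neg(a\Rightarrow\neg b) \leq c$, using that $\neg$ is an order-reversing involution (so inequalities can be transposed through it) and that $* $ residuates with $\Rightarrow$. Concretely, $\neg(a \Rightarrow \neg b) \leq c$ is equivalent, by involutivity and antitonicity of $\neg$, to $\neg c \leq a \Rightarrow \neg b$, which by residuation and commutativity is equivalent to $a * \neg c \leq \neg b$, and transposing once more gives $b \leq \neg(a * \neg c) = a * b \Rightarrow c$-type rearrangement; pushing this through yields $a * b \leq c$. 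Since both terms have the same lower sets, they are equal. With both composites shown to be the identity, term-equivalence follows immediately, and I would close by simply invoking Propositions~\ref{prop:slinha} and~\ref{p:slinhaiss} to package the result.
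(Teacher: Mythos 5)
Your proposal is correct and takes essentially the same route as the paper, whose entire proof consists of invoking Propositions~\ref{prop:slinha} and~\ref{p:slinhaiss}. You in fact go a step beyond the paper by explicitly verifying that the two term-translations are mutually inverse --- that $\neg a = a \Rightarrow 0$ holds in every $\NS$-algebra (Proposition~\ref{prop:coisas}.5) and that $a * b = \neg(a \Rightarrow \neg b)$ holds in every $\NS'$-algebra, which you correctly derive from residuation, involutivity and the currying law $(a * \neg c) \Rightarrow 0 = a \Rightarrow \neg\neg c$ --- a check that term-equivalence strictly requires but that the paper leaves implicit.
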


In the next section we are going to use the algebraic insight gained through Theorem~\ref{th:s=slinha} to provide an alternative and more perspicuous axiomatization
of~$\NS$.

\section{A finite Hilbert-style calculus for $\NS$}
\label{s:four}

We are now going to  introduce a finite Hilbert-style calculus
and prove that it is algebraizable with respect to the class of $\NS'$-algebras (hence, with respect to $\NS$-algebras). 
This will give us a finite presentation of~$\NS$ that is equivalent to Nelson's calculus of Section~\ref{s:two}, but with the added advantage of involving only a finite number of axiom schemata.

Our 
calculus is an axiomatic strengthening of the \emph{full Lambek calculus with exchange and weakening} 
($\mathcal{FL}_{ew}$; see e.g.~\cite{OnoKo85}),
which will allow us to obtain the algebraizability of~$\NS$ 
as an easy extension of the
corresponding result about $\mathcal{FL}_{ew}$.

\subsection*{The calculus $\NS'$}

\begin{definition}
	The logic $\NS' := \langle \mathbf{Fm}, \vdash_{\NS'} \rangle$ is 
	the sentential
	logic in the language $\langle \land, \lor, *, \Rightarrow, 
	0, 1 \rangle $ of type $\langle 2, 2, 2, 2, 
	0, 0 \rangle $
	defined by the Hilbert-style calculus with the following axiom schemata and \textit{modus ponens} as its only rule schema:
\end{definition}

\begin{description}
	\item[\texttt{(S}1)] \qquad $(\varphi \Rightarrow \psi) \Rightarrow ((\psi \Rightarrow \gamma) \Rightarrow (\varphi \Rightarrow \gamma))$
	\item[\texttt{(S}2)] \qquad $(\varphi \Rightarrow (\psi \Rightarrow \gamma)) \Rightarrow (\psi \Rightarrow (\varphi \Rightarrow \gamma))$
	\item[\texttt{(S}3)] \qquad $\varphi \Rightarrow (\psi \Rightarrow \varphi)$
	\item[\texttt{(S}4)] \qquad $\varphi \Rightarrow (\psi \Rightarrow (\varphi * \psi))$
	\item[\texttt{(S}5)] \qquad $(\varphi \Rightarrow (\psi \Rightarrow \gamma)) \Rightarrow ((\varphi * \psi) \Rightarrow \gamma)$
	\item[\texttt{(S}6)] \qquad $(\varphi \land \psi) \Rightarrow \varphi$
	\item[\texttt{(S}7)] \qquad $(\varphi \land \psi) \Rightarrow \psi$
	\item[\texttt{(S}8)] \qquad $(\varphi \Rightarrow \psi) \Rightarrow ((\varphi \Rightarrow \gamma) \Rightarrow (\varphi \Rightarrow (\psi \land \gamma)))$
	\item[\texttt{(S}9)] \qquad $\varphi \Rightarrow (\varphi \lor \psi)$
	\item[\texttt{(S}10)] \qquad $\psi \Rightarrow (\varphi \lor \psi)$
	\item[\texttt{(S}11)] \qquad $(\varphi \Rightarrow \gamma) \Rightarrow ((\psi \Rightarrow \gamma) \Rightarrow ((\varphi \lor \psi) \Rightarrow \gamma))$
	\item[\texttt{(S}12)] \qquad $
	1
	$
	\item[\texttt{(S}13)] \qquad $0 
	\Rightarrow \varphi$
	\item[\texttt{(S}14)] \qquad 
	$((\varphi \Rightarrow  0 ) \Rightarrow 0) \Rightarrow \varphi $
	\item[\texttt{(S}15)] \qquad $(\varphi \Rightarrow (\varphi \Rightarrow (\varphi \Rightarrow \psi))) \Rightarrow (\varphi \Rightarrow (\varphi \Rightarrow \psi))$
	
\end{description}

Axioms from $(\textbf{\texttt{S1}})$ to $(\textbf{\texttt{S13}})$
are those that axiomatize $\mathcal{FL}_{ew}$
as presented in \linebreak
\cite[Section 5]{SpVe08a}, where 
$\mathcal{FL}_{ew}$ is proven to be
algebraizable. 
From that result we can immediately obtain the following:

\begin{theorem}
	\label{th:dois}
	The calculus 
	$\NS'$ is algebraizable %
	with the same defining equation and equivalence formulas as~$\NS$ 
	(cf.~Theorem~\ref{th:eqfor}).
	Its equivalent algebraic semantics is the class of $\NS'$-algebras.
\end{theorem}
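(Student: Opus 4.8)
The plan is to obtain the result as an immediate instance of the general transfer principle for axiomatic extensions of algebraizable logics. Recall that if a logic $\mathcal{L}$ is algebraizable with defining equations $\mathsf{E}$, equivalence formulas $\Delta$ and equivalent algebraic semantics a quasivariety $\mathsf{K}$, then any axiomatic extension $\mathcal{L}^{+}$ of $\mathcal{L}$ (obtained by adding axiom schemata while keeping exactly the same rules) is again algebraizable, with the \emph{same} $\mathsf{E}$ and $\Delta$; moreover its equivalent algebraic semantics is the subclass of $\mathsf{K}$ axiomatized by the equations $\mathsf{E}(\sigma)$ obtained by translating each new axiom schema $\sigma$ (see, e.g., \cite{BP89, Font16}). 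Since axioms $(\texttt{S1})$–$(\texttt{S13})$ are precisely those that axiomatize $\mathcal{FL}_{ew}$ — which is algebraizable with defining equation $x \approx 1$, equivalence formulas $\{x \Rightarrow y, y \Rightarrow x\}$ and equivalent algebraic semantics the variety of CIBRLs, as established in \cite{SpVe08a} — it suffices to apply this principle with $\mathcal{L} = \mathcal{FL}_{ew}$ and the two extra axioms $(\texttt{S14})$ and $(\texttt{S15})$.

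First I would observe that $\NS'$ really is an axiomatic extension of $\mathcal{FL}_{ew}$: the language and the only rule (\emph{modus ponens}) coincide, and the sole difference is the presence of the two additional axiom schemata. Algebraizability of $\NS'$ with the stated $\mathsf{E}$ and $\Delta$ then follows at once, and its equivalent algebraic semantics is the subvariety of CIBRLs defined by the two translated equations $\mathsf{E}(\texttt{S14})$ and $\mathsf{E}(\texttt{S15})$, namely $(\texttt{S14}) \approx 1$ and $(\texttt{S15}) \approx 1$.

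The crux — and the step I expect to require genuine work — is to check that these two equations cut out exactly the class of $\NS'$-algebras, i.e.\ that over CIBRLs they are equivalent to involutivity and three-potency. Writing $\neg x := x \Rightarrow 0$, the equation $\mathsf{E}(\texttt{S14})$ reads $(\neg\neg x \Rightarrow x) \approx 1$, which by the order characterization $a \Rightarrow b = 1$ iff $a \leq b$ amounts to $\neg\neg x \leq x$; since every CIBRL already satisfies $x \leq \neg\neg x$ (by residuation, $x * \neg x \leq 0$), this is equivalent to the involution law $\neg\neg x \approx x$. For $\mathsf{E}(\texttt{S15})$, the residuation identity $a \Rightarrow (b \Rightarrow c) = (a*b) \Rightarrow c$ rewrites the iterated implications as $x \Rightarrow (x \Rightarrow (x \Rightarrow y)) = x^3 \Rightarrow y$ and $x \Rightarrow (x \Rightarrow y) = x^2 \Rightarrow y$, so the equation becomes $((x^3 \Rightarrow y) \Rightarrow (x^2 \Rightarrow y)) \approx 1$, that is, $x^3 \Rightarrow y \leq x^2 \Rightarrow y$ for all $y$. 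Antitonicity of $\Rightarrow$ in its first argument shows this follows from $x^2 \leq x^3$; conversely, instantiating $y := x^3$ and using $a \Rightarrow a = 1$ gives $x^2 \Rightarrow x^3 = 1$, i.e.\ $x^2 \leq x^3$, which is three-potency.

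Putting these computations together, the equivalent algebraic semantics of $\NS'$ is the subvariety of CIBRLs satisfying both involutivity and three-potency, which by Definition~\ref{def:slinha} is precisely the class of $\NS'$-algebras; the defining equation and equivalence formulas are inherited unchanged from $\mathcal{FL}_{ew}$ and, as recorded in Theorem~\ref{th:eqfor}, coincide with those of $\NS$. The only non-routine point is the pair of residuation-and-order manipulations translating $(\texttt{S14})$ and $(\texttt{S15})$ into the defining equations of $\NS'$-algebras; everything else reduces to a direct appeal to the transfer theorem together with the known algebraizability of $\mathcal{FL}_{ew}$.
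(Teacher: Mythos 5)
Your proposal is correct and follows essentially the same route as the paper: invoke the known algebraizability of $\mathcal{FL}_{ew}$ with respect to CIBRLs, transfer algebraizability (with the same defining equation and equivalence formulas) to the axiomatic strengthening $\NS'$, and identify the resulting subvariety via the translated equations for \texttt{(S14)} and \texttt{(S15)}. The only difference is that you spell out in detail the verification that these equations amount to involutivity and three-potency, which the paper dismisses as ``easy to check''; your computations there are accurate.
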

\begin{proof}
	We know from  \cite[Lemmas 5.2 and 5.3]{SpVe08a} that $\mathcal{FL}_{ew}$ is algebraizable
	with respect to the class of CIBRLs. 
	Given that $\NS'$ is an axiomatic strengthening of $\mathcal{FL}_{ew}$, by
	\cite[Proposition 3.31]{Font16},
	it is also 
	algebraizable with the same defining equation and equivalence formulas. 
	The corresponding class of algebras
	is a subvariety of the class of CIBRLs, and it can be axiomatized by adding 
	equations 
	corresponding to the new axioms, as described in Def.\ \ref{def:salg}.1.
	It is easy to check that these
	imply precisely 
	that the equivalent algebraic semantics 
	of $\NS'$ is the class of all involutive  (\textbf{\texttt{S14}}) and three-potent (\textbf{\texttt{S15}}) CIBRLs, i.e., the class of $\NS'$-algebras.
\end{proof}

Although the logics $\NS$ and $\NS'$ were  
initially defined over different propositional languages
(namely $\langle \land, \lor, \Rightarrow, \neg, 0 
\rangle $ for $\NS$ and
$\langle \land, \lor, \Rightarrow, *, \neg, 0, 1 
\rangle $ for $\NS'$),
we can obviously expand the language of~$\NS$ to include the connectives
1 
and $*$  defined by 
$1 
: = \neg 0 
$ and 
$\phi * \psi := \neg (\phi \Rightarrow \neg \psi)$. This allows us to state the following:

\begin{corollary} 
	The calculi $\NS$ (in the above-defined expanded language) and $\NS'$ define the same consequence relation.
\end{corollary}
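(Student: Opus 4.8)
The plan is to exploit the fact that an algebraizable logic is completely determined by the triple consisting of its equivalent algebraic semantics, its defining equations, and its equivalence formulas. Concretely, for an algebraizable logic $\mathcal{L}$ with equivalent algebraic semantics $\mathsf{K}$, defining equations $\mathsf{E}$ and equivalence formulas $\Delta$, one has $\Gamma \vdash_{\mathcal{L}} \phi$ if and only if $\mathsf{E}[\Gamma] \models_{\mathsf{K}} \mathsf{E}(\phi)$, where $\models_{\mathsf{K}}$ denotes equational consequence relative to $\mathsf{K}$ (this is part of the Blok--Pigozzi characterization recalled via Definition~\ref{def-algebraizable}). Thus, once we know that $\NS$ (in the expanded language) and $\NS'$ share the same $\mathsf{K}$, $\mathsf{E}$ and $\Delta$, their consequence relations must coincide.

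First I would observe that passing from $\NS$ to its expansion by the term-defined connectives $1 := \neg 0$ and $\phi * \psi := \neg(\phi \Rightarrow \neg \psi)$ is a \emph{definitional} expansion: the new connectives are introduced by terms of the original language. Such an expansion remains algebraizable, with the very same defining equation $\mathsf{E}(x) := \{x \approx 1\}$ and equivalence formulas $\Delta(x,y) := \{x \Rightarrow y, y \Rightarrow x\}$ supplied by Theorem~\ref{th:eqfor}; its equivalent algebraic semantics is simply the class of $\NS$-algebras enriched with the term-operations $*$ and $1$. This is the step I expect to require the most care, since one must check that adjoining term-definable connectives neither alters derivability among old formulas nor disturbs the defining translations. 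This follows from the general theory of definitional expansions of algebraizable logics, but it deserves to be spelled out.

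Next I would invoke Theorem~\ref{th:s=slinha}: the class of $\NS$-algebras enriched with $*$ and $1$ is term-equivalent to --- indeed, is literally a notational variant of --- the class of $\NS'$-algebras. Hence the expansion of $\NS$ and the logic $\NS'$ possess the very same equivalent algebraic semantics, namely the class of $\NS'$-algebras. By Theorem~\ref{th:dois}, $\NS'$ is algebraizable with defining equation $\{x \approx 1\}$ and equivalence formulas $\{x \Rightarrow y, y \Rightarrow x\}$ as well, so the two logics agree on all three data $\mathsf{K}$, $\mathsf{E}$ and $\Delta$.

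Finally, combining these observations with the bridge principle recalled at the outset, I would conclude that for every $\Gamma \cup \{\phi\}$ in the common language we have $\Gamma \vdash_{\NS} \phi$ iff $\mathsf{E}[\Gamma] \models_{\mathsf{K}} \mathsf{E}(\phi)$ iff $\Gamma \vdash_{\NS'} \phi$, where $\mathsf{K}$ is the class of $\NS'$-algebras. This yields the desired identity of consequence relations. The only genuine subtlety, as noted, lies in justifying the harmlessness of the definitional expansion; everything else is a direct application of the algebraizability results already established.
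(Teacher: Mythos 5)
Your proposal is correct and takes essentially the same approach as the paper: both arguments rest on the fact that $\NS$ (in the expanded language) and $\NS'$ are algebraizable with the same defining equation and equivalence formulas with respect to the same class of algebras, which uniquely determines the consequence relation. The only cosmetic difference lies in how this uniqueness is cashed out --- the paper invokes the axiomatization algorithm of \cite[Proposition 3.47]{Font16}, which depends only on the presentation of the class $\mathsf{K}$ together with the defining equations and equivalence formulas, whereas you invoke the equational-consequence bridge $\Gamma \vdash \phi$ iff $\mathsf{E}[\Gamma] \models_{\mathsf{K}} \mathsf{E}(\phi)$; these are interchangeable consequences of the Blok--Pigozzi theory.
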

\begin{proof}
	The result follows straightforwardly from the fact that 
	$\NS$ and $\NS'$ are algebraizable (with the same defining equation and equivalence formulas) with respect to the same class of
	algebras. To add some detail one can invoke the algorithm of  \cite[Proposition 3.47]{Font16}, which allows one
	to obtain an axiomatization of an algebraizable logic from a presentation of the corresponding class 
	$K$ that comprises its equivalent
	algebraic semantics; notice that the algorithm uses only  the (quasi)equations that axiomatize $K$ and the 
	defining equations and equivalence formulas
	witnessing algebraizability. 
\end{proof}

We close the section with a non-trivial result about $\NS$ that would also not have been easily established if one had to work
with Nelson's original presentation. It is well known that substructural logics enjoy a generalized version of the Deduction-Detachment Theorem~\cite[Theorem 2.14]{GaJiKoOn07}.
Combining this result with the algebraic insight obtained in
Subsection~\ref{ss:threepointtwo} allows us to obtain a `global' deduction theorem for~$\NS$:


\begin{theorem}[Deduction-Detachment Theorem]
	\label{th:ddt}
	For all $\Gamma \cup \{ \phi\} \subseteq Fm$,
	$
	\Gamma \cup \{\varphi\} \vdash_{\NS} \psi \ \text{ if and only if  } \ 
	\Gamma \vdash_{\NS} \varphi^2  \Rightarrow \psi.
	$	
\end{theorem}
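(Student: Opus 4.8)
The plan is to transport the problem to the equivalent presentation $\NS'$, where the full substructural toolkit is available. Since $\NS$ and $\NS'$ determine the same consequence relation (the Corollary following Theorem~\ref{th:dois}) and $\varphi^2$ is a defined term common to both, it suffices to prove the stated equivalence for $\vdash_{\NS'}$, whence it transfers to $\vdash_{\NS}$. Now $\NS'$ is an axiomatic strengthening of $\mathcal{FL}_{ew}$, and $\mathcal{FL}_{ew}$ together with all of its axiomatic extensions enjoys the parametrized local deduction theorem \cite[Theorem~2.14]{GaJiKoOn07}: writing $\varphi^{0} := 1$, $\varphi^{1} := \varphi$ and $\varphi^{n} := \varphi * \varphi^{n-1}$, one has
$$
\Gamma \cup \{\varphi\} \vdash_{\NS'} \psi \quad\text{iff}\quad \Gamma \vdash_{\NS'} \varphi^{n} \Rightarrow \psi \ \text{ for some } n \in \mathbb{N}.
$$
Here integrality makes $\varphi \land 1$ collapse to $\varphi$, which is why the meet with $1$ disappears from the usual statement. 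Our only remaining task is thus to show that the existential quantifier over $n$ can be replaced by the single exponent $n = 2$.

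The key algebraic observation is that every $\NS'$-algebra satisfies $a^{2} \leq a^{n}$ for all $n \in \mathbb{N}$. Indeed, for $n \leq 2$ this is integrality (since $*$ is $\leq$-decreasing we get $a^{2} \leq a \leq 1 = a^{0}$ and $a^{2} \leq a^{1}$), while for $n > 2$ a straightforward induction from three-potency $a^{2} = a^{3}$ yields $a^{2} = a^{n}$, so in particular $a^{2} \leq a^{n}$. Since $\Rightarrow$ is antitone in its antecedent in any residuated lattice, from $a^{2} \leq a^{n}$ we obtain $a^{n} \Rightarrow c \leq a^{2} \Rightarrow c$ for all $c$. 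By the algebraizability of $\NS'$ (Theorem~\ref{th:dois}) and the completeness it provides, these inequalities translate into the theoremhood, for every $n$, of the formula $(\varphi^{n} \Rightarrow \psi) \Rightarrow (\varphi^{2} \Rightarrow \psi)$.

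With these ingredients the collapse is immediate and the theorem follows. For the right-to-left direction, if $\Gamma \vdash_{\NS'} \varphi^{2} \Rightarrow \psi$ then the local deduction theorem applied with $n = 2$ gives $\Gamma \cup \{\varphi\} \vdash_{\NS'} \psi$. For the left-to-right direction, if $\Gamma \cup \{\varphi\} \vdash_{\NS'} \psi$ then the local deduction theorem supplies some $n$ with $\Gamma \vdash_{\NS'} \varphi^{n} \Rightarrow \psi$; combining this with the theorem $(\varphi^{n} \Rightarrow \psi) \Rightarrow (\varphi^{2} \Rightarrow \psi)$ via \emph{modus ponens} yields $\Gamma \vdash_{\NS'} \varphi^{2} \Rightarrow \psi$, as required. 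Transferring back along the equivalence of $\NS$ and $\NS'$ gives the claim for $\vdash_{\NS}$.

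I expect the main obstacle to lie less in the collapse itself than in pinning down the precise form of the substructural deduction theorem and checking that it transfers verbatim to axiomatic extensions of $\mathcal{FL}_{ew}$; once the correct parametrized statement is in hand, three-potency is exactly the hypothesis that bounds the otherwise unbounded exponent $n$ (via $a^{2} = a^{n}$ for $n > 2$), while weakening disposes of the small cases $n \in \{0, 1\}$, so that no genuinely hard computation remains.
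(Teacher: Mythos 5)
Your proposal is correct and follows essentially the same route as the paper: the paper likewise invokes the local deduction theorem for axiomatic extensions of $\mathcal{FL}_{ew}$ (citing \cite[Corollary 2.15]{GaJiKoOn07}) and then collapses the exponent to $n=2$ via three-potency, merely asserting this last step is ``easy to see'' thanks to axiom \texttt{(S15)}. Your algebraic justification of the collapse --- $a^{2}\leq a^{n}$ in every $\NS'$-algebra, antitonicity of $\Rightarrow$, and algebraizability to recover the theorem $(\varphi^{n}\Rightarrow\psi)\Rightarrow(\varphi^{2}\Rightarrow\psi)$ --- is simply a careful spelling-out of that remark.
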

\begin{proof}
	From \cite[Corollary 2.15]{GaJiKoOn07}  we have that  $\Gamma \cup \{\varphi\} \vdash \psi$ iff   $\Gamma \vdash \varphi^{n} \Rightarrow \psi$ for some~$n$. Now it is easy to see that in~$\NS$, thanks to \texttt{(S15}),
	we can always choose $n=2$.
\end{proof}

Theorem~\ref{th:ddt} suggests that, upon defining
$\varphi  \to \psi := \varphi^2  \Rightarrow \psi$,
one may obtain in~$\NS$ a new implication-type connective $\to$ 
that enjoys the standard formulation of the Deduction-Detachment Theorem (for which $n=1$).
This is precisely what happens in Nelson's logic $\mathcal{N}3$, where
in fact $\to$ is usually taken as the primitive implication and $\Rightarrow$
as  defined  (see Subsection~\ref{ss:sixpointtwo}). 
Whether a similar interdefinability result holds for
$\NS$ as well is actually an interesting open question, to which we shall
return in Section~\ref{s:seven}. 
%
For now, what we can say is that the above-defined term $\to$ does indeed behave on $\NS$-algebras like an implication operation, at least in the abstract sense introduced by Blok, K{\"o}hler and Pigozzi~\cite{BKP-EDPC-II}.
The latter paper is the second of a series devoted to classes of algebras of non-classical logics~\cite[Definition II.9.3]{BuSa00}, focusing in particular on varieties that enjoy the property of having
\emph{equationally definable principal congruences}, or EDPC for short~\cite[p.~338]{BKP-EDPC-II}.
This is quite a strong property; in particular,
it implies congruence-distributivity and the congruence extension property
~\cite[Theorem 1.2]{BP-EDPC-I}.
It is well known that a logic that is algebraizable with respect to some variety of algebras enjoys a (generalized) 
Deduction-Detachment Theorem%
\footnote{See e.g.~\cite[Definition 3.76]{Font16} for a precise definition of generalized Deduction-Detachment Theorem.}
if and only if its associated variety has EDPC~\cite[Corollary 3.86]{Font16}. 
This applies, in particular, to our logic $\NS$ and to $\NS$-algebras.

In the context of varieties of non-classical logic having EDPC,
the authors of~\cite{BKP-EDPC-II} single out those  that possess term-definable
operations that can be viewed as generalizations of intuitionistic 
conjunction, implication and bi-implication. 
These operations are called, respectively, \emph{weak meet},
\emph{weak relative pseudo-complementation} and 
\emph{G{\"o}del equivalence}.
Algebras containing such operations are called
\emph{weak Brouwerian semilattices with filter-preserving operations}, or 
WBSO for short~\cite[Definition 2.1]{BKP-EDPC-II}. 


According to~\cite{Agliano97}, a variety having a constant~$1$ is called \emph{subtractive} if the congruences of any algebra in the variety permute at~$1$. Subtractive WBSO varieties are particularly interesting because the lattice of congruences of any algebra $\A$ belonging to a subtractive WBSO variety is isomorphic to the ideal lattice of $\A$, for a certain uniformly-defined notion of ideal.
As observed in~\cite[p.~358]{BKP-EDPC-II}, the algebraic counterpart of 
Nelson's logic $\mathcal{N}3$ is a WBSO variety. 
The same is true for $\NS$-algebras; in fact, we can here prove
a slightly stronger result:

\begin{theorem}
	\label{th:wbso}
	$\NS$-algebras form a WBSO variety in which a \emph{weak meet} is given by $\land$ (or, equivalently, by $*$),
	\emph{weak relative pseudo-complementation} is
	given by the term $x^2  \Rightarrow y$ and 
	\emph{G{\"o}del equivalence} is $x \Leftrightarrow y$.
	In fact, $\NS$-algebras form a \emph{subtractive WBSO variety}.
\end{theorem}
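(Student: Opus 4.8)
The plan is to verify, for the proposed terms, the defining identities of a weak Brouwerian semilattice with filter-preserving operations in the sense of \cite[Definition 2.1]{BKP-EDPC-II}, working throughout with the term-equivalent presentation of $\NS$-algebras as three-potent involutive CIBRLs (Theorem~\ref{th:s=slinha}), so that the residuated structure of Proposition~\ref{prop:coisas} and the identities of Lemma~\ref{lemma1} are at our disposal. A good deal of the work is in fact already done by algebraizability. Since $\NS$ is algebraizable with defining equation $x \approx 1$ and equivalence formulas $\{x \Rightarrow y, y \Rightarrow x\}$ (Theorem~\ref{th:eqfor}), the replacement property $\mathbf{(Rep)}$ of Definition~\ref{def-algebraizable} guarantees that every basic operation preserves deductive filters --- which is precisely the \emph{filter-preserving} requirement --- while the conjunction of $\mathbf{(R)}$ through $\mathbf{(Alg3)}$ yields that $x \approx y$ holds iff $x \Leftrightarrow y \approx 1$, exactly the condition demanded of the \emph{G\"odel equivalence}. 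Moreover, by Theorem~\ref{th:ddt} together with the cited correspondence between the generalized Deduction-Detachment Theorem and EDPC, the variety already has EDPC; what remains is to exhibit the weak meet and the weak relative pseudo-complement witnessing the WBSO structure.

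For the \emph{weak relative pseudo-complement} $x^2 \Rightarrow y$, the required identities follow from residuation and three-potency. Reflexivity $x^2 \Rightarrow x \approx 1$ holds because $x^2 \leq x$ by integrality, and the unit law $1 \Rightarrow x \approx x$ is immediate from residuation. The currying law is where the monoid operation enters: using $a \Rightarrow (b \Rightarrow c) \approx (a * b) \Rightarrow c$ and commutativity of $*$ we obtain $x^2 \Rightarrow (y^2 \Rightarrow z) \approx (x * y)^2 \Rightarrow z$. The crucial step --- the one I would isolate as a lemma --- is the identity
\[
(x \land y)^2 \approx (x * y)^2,
\]
valid in every three-potent CIRL: the inequality $(x*y)^2 \leq (x \land y)^2$ is clear from $x * y \leq x \land y$, while $(x \land y)^2 \leq x * y$ (from $x \land y \leq x$ and $x \land y \leq y$) gives, after squaring and applying $a^2 \approx a^4$, the reverse inequality. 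This identity is exactly what makes the weak meet well-behaved and, in particular, what justifies the parenthetical ``equivalently, by $*$'': the WBSO structure sees the weak meet $\mathrm{wm}(x,y)$ only through the term $\mathrm{wm}(x,y)^2 \Rightarrow z$, and since $(x \land y)^2 \approx (x * y)^2$ the two choices yield literally the same operation.

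With $(x \land y)^2 \approx (x*y)^2$ in hand, the remaining weak-semilattice and weak-adjunction identities reduce to routine computations: associativity, commutativity and the unit law for the weak meet are inherited from the monoid $\langle A, *, 1\rangle$ of Proposition~\ref{prop:coisas}.2 once transported through the above identity, and the distribution of $x^2 \Rightarrow (\,\cdot\,)$ over $\land$ follows from the residuated identity $a \Rightarrow (b \land c) \approx (a \Rightarrow b) \land (a \Rightarrow c)$. I expect the genuine obstacle to lie in the weak meet/modus-ponens law, that is, in checking the adjunction between the weak meet and $x^2 \Rightarrow y$ in exactly the \emph{weak} form demanded by \cite[Definition 2.1]{BKP-EDPC-II}: here full contraction is unavailable, so for $m := x \land (x^2 \Rightarrow y)$ one only recovers $m^2 \leq y$ rather than $m \leq y$, and the argument must be arranged so that three-potency ($a^2 \approx a^3$), not contraction, carries it through. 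This is precisely the reason $\NS$-algebras land in a \emph{weak} Brouwerian (WBSO) variety rather than a genuine Brouwerian one.

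Finally, for subtractivity I would exhibit the witnessing term directly: following \cite{Agliano97} it suffices to produce a binary term $s(x,y)$ with $s(x,x) \approx 1$ and $s(x,1) \approx x$, and the implication itself serves, taking $s(x,y) := y \Rightarrow x$, since $x \Rightarrow x \approx 1$ and $1 \Rightarrow x \approx x$. This forces the congruences of every $\NS$-algebra to permute at $1$, upgrading the conclusion to a \emph{subtractive} WBSO variety and completing the proof.
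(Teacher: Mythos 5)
Your proposal is correct in substance, but it takes the route that the paper explicitly sets aside. The paper's own proof never verifies the axioms of \cite[Definition 2.1]{BKP-EDPC-II} for the stated terms: after observing (as you also do) that Theorem~\ref{th:ddt} gives EDPC via \cite[Corollary 3.86]{Font16}, it invokes \cite[Theorem 3.3]{SpVe07} (whose hypotheses hold by \cite[Lemma 3.2]{SpVe07}) to conclude that $(x \Leftrightarrow y)^2 * z$ is a ternary deductive term regular with respect to~$1$, and then applies \cite[Theorem 4.4]{BP-EDPC-III} to get the WBSO conclusion for $\NS$-algebras; subtractivity is settled by Aglian\`{o}'s criterion, i.e.\ the identity $1^2 \Rightarrow x \approx x$. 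That argument is shorter and leans on existing machinery, but --- as the paper itself notes in the remark following the proof --- it produces \emph{different} witnessing terms ($x^2 * y$, $(x^2 \Rightarrow (x^2 * y))^2$ and $(x \Leftrightarrow y)^2$), so the particular terms named in the theorem are covered only by the unelaborated opening sentence ``one could directly check.'' Your proof is exactly that direct check, and it supplies what the paper leaves implicit: the lemma $(x \land y)^2 \approx (x * y)^2$, valid in every three-potent CIRL (square the chain $(x*y)^2 \leq (x \land y)^2$ and $(x \land y)^2 \leq x*y$, then collapse with $a^2 \approx a^4$), is precisely what makes $\land$ and $*$ interchangeable as weak meets and what drives the adjunction with $x^2 \Rightarrow y$; and your observation that the weak modus ponens law delivers only $m^2 \leq y$ for $m := x \land (x^2 \Rightarrow y)$ --- rescued by $m^2 \approx m^3 \leq x^2 * (x^2 \Rightarrow y) \leq y$ --- pinpoints exactly where three-potency replaces contraction. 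Your subtraction term $s(x,y) := y \Rightarrow x$, with $s(x,x) \approx 1$ and $s(x,1) \approx x$, is the Ursini-style form of the same fact the paper quotes from \cite[p.~215]{Agliano97}. Two cautions for a complete write-up: since you reconstruct \cite[Definition 2.1]{BKP-EDPC-II} rather than quote it, the identities actually listed there must be checked one by one (in particular, the G{\"o}del equivalence is required to generate principal congruences, not merely to satisfy $a \Leftrightarrow b = 1$ iff $a = b$; this is where your EDPC step does real work rather than being redundant); and the phrase ``the two choices yield literally the same operation'' should be softened, since $\land$ and $*$ are distinct operations --- the correct claim is that each of them satisfies the weak-meet conditions, a deductive filter being closed under one iff it is closed under the other.
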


\begin{proof}
	One could directly check that, with the above choice of terms, $\NS$-algebras satisfy all properties of~\cite[Definition 2.1]{BKP-EDPC-II}. But we can provide a more compact proof as follows.
	As mentioned earlier, since the logic $\NS$ has a form of Deduction-Detachment Theorem (our Theorem~\ref{th:ddt}),
	we know that the variety of~$\NS$-algebras has EDPC~\cite[Corollary 3.86]{Font16}.
	We can then apply \cite[Theorem 3.3]{SpVe07} (note that $\NS$-algebras satisfy the premisses of the theorem
	thanks to~\cite[Lemma 3.2]{SpVe07}) to conclude that $(x \Leftrightarrow y )^2 * z $ is a 
	\emph{ternary deductive term} for $\NS$-algebras 
	in the sense of~\cite[Definition 2.1]{BP-EDPC-III}
	that is moreover \emph{regular with respect to~$1$}~\cite[Definition 4.1]{BP-EDPC-III}. 
	Then, by~\cite[Theorem 4.4]{BP-EDPC-III}, we have that $\NS$-algebras form a WBSO variety. 
	Finally, to check that $\NS$-algebras are subtractive~\cite[p.~214]{Agliano97}, it is sufficient to note that they satisfy the equation $1^2 \Rightarrow x \approx x$ \cite[p.~215]{Agliano97}. 
\end{proof}

Regarding the proof of the preceding theorem, it may be interesting to note that applying~\cite[Theorem 4.4]{BP-EDPC-III} to the ternary deductive term $(x \Leftrightarrow y )^2 * z $ would give us different witnessing terms: namely, we would obtain
$x^2 * y$ as weak meet, 
$ (x^2 \Rightarrow (x^2 * y))^2$ as
weak relative pseudo-complementation and 
$(x \Leftrightarrow y )^2$ as
G{\"o}del equivalence. This is not surprising, for such terms need not be unique. 

%
%

\section{More on $\NS$-algebras}
\label{s:five} 

\subsection{A non-distributive $\NS$-algebra}
\label{ss:fiveone}

We are now going to look at a particular $\NS$-algebra that provides a counter-example for
several formulas of~$\NS$ that are not valid, including the formulas which Nelson claims (without proof) not to be provable
in his calculus~\cite[p.~213]{Nel59}.

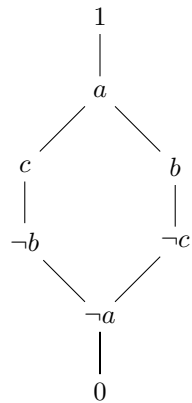
\begin{figure}[h]
	\label{a8}
	$$
	\begin{tikzpicture}
	\label{A8}[scale=.9][h]
	\node (1) at (0,2) {$1$};
	\node (6) at (0,1) {$a$};
	\node (2) at (-1,0) {$c$};
	\node (5) at (1,0) {$b$};
	\node (4) at (-1,-1) {$\neg b$};
	\node (3) at (1,-1) {$\neg c$};
	\node (7) at (0,-2) {$\neg a$};
	\node (0) at (0,-3) {$0$};

	\draw (1) -- (6) -- (5) -- (3) -- (7) -- (0) -- (7) -- (4) -- (2) -- (6);
\end{tikzpicture}
$$
\caption{$\mathbf{A_{8}}$}
\end{figure}

\medskip

\noindent 

\begin{example} The algebra $\mathbf{A_{8}}$ shown in Figure~1 
	is an  $\NS$-algebra whose lattice reduct is obviously not distributive.
	The table for the implication $\Rightarrow$ of $\mathbf{A_{8}}$ is shown below.
\end{example}
	
	
	$$\begin{tabular}{r|rrrrrrrr}
	$\Rightarrow$ & 0 & 1 & $c$ & $\neg c$ & $\neg b$ & $b$ & $a$ & $\neg a$\\
	\hline
	0 & 1 & 1 & 1 & 1 & 1 & 1 & 1 & 1 \\
	1 & 0 & 1 & $c$ & $\neg c$ & $\neg b$ & $b$ & $a$ & $\neg a$ \\
	$c$ & $\neg c$ & 1 & 1 & $b$ & $a$ & $b$ & 1 & $b$ \\
	$\neg c$ & $c$ & 1 & $c$ & 1 & $c$ & 1 & 1 & $c$ \\
	$\neg b$ & $b$ & 1 & 1 & $b$ & 1 & $b$ & 1 & $b$ \\
	$b$ & $\neg b$ & 1 & $c$ & $a$ & $c$ & 1 & 1 & $c$ \\
	$a$ & $\neg a$ & 1 & $c$ & $b$ & $c$ & $b$ & 1 & $\neg a$ \\
	$\neg a$ & $a$ & 1 & 1 & 1 & 1 & 1 & 1 & 1\\
	\end{tabular}$$ 
	
	\medskip

{In Figure~1, we consider that $\neg 1 = 0$ and that $\neg \neg x = x$, for each $x \in A_{8}$. The above sound model of~$\NS'$ was found by using the Mace4 Model Searcher \cite{Mace4}.}
We are going to check that $\mathbf{A_{8}}$
witnesses the failure of all the formulas listed below:

\begin{proposition} 
	\label{p:51}
	The following formulas cannot be proved in $\NS$.
	\begin{enumerate}
		\item $p \lor \neg p$ \hfill (Excluded Middle)
		\item $\neg( p \land \neg p)$
		\item $( p \land \neg p) \Rightarrow q$ \hfill (Ex Contradictione)
		\item $(p \Rightarrow( p \Rightarrow q)) \Rightarrow (p \Rightarrow q)$ \hfill (Contraction)
		\item $(p \Rightarrow (q \Rightarrow r)) \Rightarrow ((p \land q) \Rightarrow r)$
		\item $(p \land \neg q) \Rightarrow \neg( p \Rightarrow q)$
		\item 
		$((p \Rightarrow q) \Rightarrow q) \Rightarrow ((q \Rightarrow p) \Rightarrow p)$
		\hfill (\L ukasiewicz)
		\item $(p  \land (q \lor r )) \Rightarrow ((p  \land q) \lor(p  \land r )) $ 		\hfill (Distributivity)
		\item $((p ^{2} \Rightarrow q ) \land ((\neg q )^{2} \Rightarrow \neg p )) \Rightarrow (p  \Rightarrow q )$
		\hfill (Nelson)
	\end{enumerate}
\end{proposition}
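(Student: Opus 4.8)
The plan is to prove all nine non-provability claims by exhibiting, for each formula, a valuation into the eight-element algebra $\mathbf{A_8}$ under which the formula takes a value strictly below $1$. Since $\NS$ is algebraizable with defining equation $\mathsf{E}(x) := \{x \approx 1\}$ (Theorem~\ref{th:eqfor}), a formula $\varphi$ is provable in $\NS$ if and only if the equation $\varphi \approx 1$ holds in every $\NS$-algebra; in particular, if the interpretation of $\varphi$ in $\mathbf{A_8}$ fails to equal $1$ for some assignment of the variables, then $\varphi$ is not a theorem. So the whole proposition reduces to a finite collection of table look-ups in $\mathbf{A_8}$.

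First I would record the data needed for the computations: the implication table already displayed, the lattice order read off from the Hasse diagram in Figure~1 (which gives $\land$ and $\lor$), and the negation, fixed by $\neg 1 = 0$, $\neg 0 = 1$, and $\neg\neg x = x$ together with the labelling of the elements $c, \neg c, b, \neg b, a, \neg a$. From these I would also compute the derived operation $p^2 := \neg(p \Rightarrow \neg p) = p * p$ wherever it is needed (for claims~4, 9), using the definition $a * b := \neg(a \Rightarrow \neg b)$ fixed in the paper. With this bookkeeping in place, each item becomes a single evaluation. For each formula I would pick a convenient counterexample assignment and display the resulting value, e.g. for Excluded Middle one checks that $a \lor \neg a \neq 1$, and similarly for the remaining eight formulas one exhibits witnesses for $p$ (and $q$, $r$ where present) making the top-level implication land on some element $\neq 1$.

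Concretely, I would lay the verification out as a short enumerated list mirroring the statement, writing one line per formula of the form ``setting $p = \cdots$ (and $q = \cdots$) we obtain value $\cdots \neq 1$''. The only genuine subtlety is that one must confirm $\mathbf{A_8}$ really is an $\NS$-algebra before drawing conclusions; the paper already asserts this (the model was found with Mace4 and checked to be a sound model of $\NS'$), and by Theorem~\ref{th:s=slinha} being an $\NS'$-algebra is equivalent to being an $\NS$-algebra, so I would simply cite that fact rather than re-verify three-potency, involutivity, and residuation by hand.

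The main obstacle, such as it is, is purely computational rather than conceptual: one must not misread the non-distributive lattice order (the diamond formed by $c, \neg b$ in the middle is exactly what breaks Distributivity, so item~8 is the most instructive and the easiest to get wrong) and one must correctly evaluate the nested implications and the squares $(\cdot)^2$ against the given table. I expect items~4 and~9, which involve $p^2$ inside an implication, to require the most care, since there the witness depends on computing $p * p$ first; but once the auxiliary values $a^2, b^2, c^2$ are tabulated, each remaining check is mechanical. No deep argument is needed beyond the completeness direction of algebraizability, which transfers unprovability in the calculus to falsification in a single finite model.
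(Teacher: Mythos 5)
Your proposal is correct and follows essentially the same route as the paper: the paper's proof likewise invokes soundness of the algebraic semantics (via \cite[Lemma 2.6]{Font16} and the algebraizability of $\NS$) to reduce each non-provability claim to exhibiting a valuation into $\mathbf{A_8}$ under which the formula evaluates to some element $\neq 1$, and then performs exactly the table look-ups you describe. The only difference is that the paper writes out the explicit witnessing assignments and computations for all nine items, which your plan defers but correctly identifies as mechanical.
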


\begin{proof}
	Thanks to \cite[Lemma 2.6]{Font16}, if $\phi$ can be proved in $\NS$, then $\V(\phi)$ = $\V(\phi \to \phi)$. As $\V(\phi \to \phi)$ = 1,  it suffices to find, for each of the above formulas, some valuation $\V: \mathbf{Fm} \to \mathbf{A_{8}}$  such that 
	$\V (\phi) \neq 1 $. 
	\\
	1. Setting $ \V( p ) = c$, we have
	$\V (p \lor \neg p) = \V(p) \lor \neg \V(p) = c \lor \neg c = a$.
	\\
	2. Setting $ \V( p ) = c$, we have $\V(\neg( p \land \neg p))$ 
	= $\neg(  c  \land \neg  c ) = \neg(  \neg a ) =   a.$ \\
	3. Let $\V( p ) = c$ and $\V( q ) = 0$. Then  $\V((p \land \neg p) \Rightarrow q) =$ 
	$ \neg a \Rightarrow 0 = a$. \\
	4. Let $\V( p ) =  c$ and $\V( q ) = 0$. Then $\V((p \Rightarrow( p \Rightarrow q)) \Rightarrow (p \Rightarrow q)) 
	=  (c \Rightarrow ( c \Rightarrow 0)) \Rightarrow (c \Rightarrow 0) = (c \Rightarrow \neg c) \Rightarrow \neg c = b \Rightarrow \neg c = a.$ \\
	5. Let $\V( p ) = \V( q ) = c$ and $\V( r ) = 0$, then $\V((p \Rightarrow (q \Rightarrow r)) \Rightarrow ((p \land q) \Rightarrow r)) =
	(c \Rightarrow (c \Rightarrow 0)) \Rightarrow ((c \land c) \Rightarrow 0)  = b \Rightarrow (c \Rightarrow 0) = b \Rightarrow \neg c = a.$ \\
	6. Let $\V( p ) = \V( q ) = c$, then 
	$ \V((p \land \neg q) \Rightarrow \neg( p \Rightarrow q))$
	=  $(c \land \neg c) \Rightarrow \neg(c \Rightarrow c) = \neg a \Rightarrow \neg 1 = a.$
	\\
	7.
	Let 
	$\V(p) = \neg c$ and $\V(q) =  c$. We have   
	%
	$
	\V(((p \Rightarrow q) \Rightarrow q) \Rightarrow ((q \Rightarrow p) \Rightarrow p)) 
	=  ((\neg c \Rightarrow  c) \Rightarrow c) \Rightarrow ((c \Rightarrow \neg c) \Rightarrow \neg c)$ 
	$= (c \Rightarrow c) \Rightarrow (b \Rightarrow \neg c) = 1 \Rightarrow a = a$.
	\\
	8.  Let $\V(p) = c$, $\V(q) = \neg c$ and $\V(r) =  \neg b$. We have $\V((p  \land (q \lor r )) \Rightarrow ((p  \land q) \lor(p  \land r )))$ = $((c \land (\neg c \lor \neg b))$ $\Rightarrow ((c \land \neg c) \lor (c \land \neg b)) = (c \land a) \Rightarrow (\neg a \lor \neg b) = c \Rightarrow \neg b = a$.
	\\
	9. Let $\V(p) = c$ and $\V(q) = \neg b$. We have $\V(((p ^{2} \Rightarrow q ) \land ((\neg q )^{2} \Rightarrow \neg p )) \Rightarrow (p  \Rightarrow q ))$ = $(\neg (c \Rightarrow \neg c)) \Rightarrow \neg b) \land ((\neg( b \Rightarrow \neg b)) \Rightarrow \neg c) \Rightarrow (c \Rightarrow \neg b) = (\neg b \Rightarrow \neg b) \land (\neg c \Rightarrow \neg c) \Rightarrow a = 1\Rightarrow a = a.$ 
\end{proof}

If we were to add the (\L ukasiewicz) formula from Proposition~\ref{p:51} 
as a new axiom schema to $\NS$ (or $\NS'$), we would obtain precisely 
the \emph{three-valued  \L ukasiewicz logic}~\cite[Chapter 4.1]{CiMuOt99}. 
No
other non-classical logic in the \L ukasiewicz family is comparable with
$\NS$  because, on the one hand, they all lack three-potency, and, on the other,
$\NS$ does not satisfy
(\L ukasiewicz), 
which is valid in all of them.

\subsection{The Galatos-Raftery doubling construction}
\label{ss:dou}

We  present here an adaptation of the construction introduced in \cite[Section 6]{GaRa04} to embed a
commutative integral residuated lattice 
into one
having an involutive negation. This  
will provide us with a simple recipe for constructing $\NS$-algebras, and will also
prove useful in studying the relation between subclasses of residuated lattices
and subclasses of   $\NS$-algebras (see Section~\ref{s:ext}).

\begin{definition} 
	\label{invol}
	Given a  	CIRL $ \mathbf{A} := \langle A, \land, \lor,  *, \Rightarrow, 1  \rangle$,
	let $\neg A := \{\neg a : a \in A\}$ be a disjoint copy of~$A$, and let $A^{*} := A \cup \neg A$. We extend the lattice order of $ \mathbf{A}$ to $A^*$ as follows. For all $a, b \in A$:
	\begin{enumerate}
		\item $a \leq_{A^{*}} b$ iff $a \leq_A b.$
		\item $\neg a \leq_{A^{*}}  b$.
		\item $\neg a \leq_{A^{*}} \neg b$ \ iff \ $b \leq_{A} a$.
	\end{enumerate}
	For each $a \in A$, we define  $\neg (\neg a) := a$. The behavior of the lattice operations is fixed according to De Morgan's laws:
	$\neg a \land \neg b := \neg (a \lor b)$ and 
	$\neg a \lor \neg b := \neg (a \land b)$. 
	The operations $*$ and $\Rightarrow$ are extended to $A^*$ as follows:
	\vspace{-0.2cm}
	$$a * \neg b 
	:= \neg (a \Rightarrow b)  \qquad \neg a * \neg b 
	:=  \neg 1 \vspace{-0.2cm}$$ 
	$$a \Rightarrow \neg b :=  \neg (a * b)  \qquad \neg a \Rightarrow \neg b := b \Rightarrow a \qquad \neg a \Rightarrow  b := 1 $$
\end{definition}


It is shown in \cite[Section 6]{GaRa04} that, if $\A$
is a CIRL, then
\NA$^{*}$ is an involutive CIBRL
into which $\A$ is embedded in the obvious way.
Moreover, we have the following:

\begin{proposition}
	\label{p:gara}
	\NA$^{*}$ is an $\NS$-algebra if and only if  \NA \ is a three-potent CIRL. 
\end{proposition}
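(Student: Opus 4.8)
The plan is to prove the two directions of the biconditional separately, exploiting the fact (stated in the excerpt) that $\mathbf{A}^*$ is always an involutive CIBRL whenever $\mathbf{A}$ is a CIRL. By Definition~\ref{def:slinha}, $\mathbf{A}^*$ is an $\NS$-algebra (equivalently, an $\NS'$-algebra) precisely when it is three-potent, i.e.\ when $x^2 \leq_{A^*} x^3$ holds for all elements. So the whole statement reduces to showing that three-potency of $\mathbf{A}^*$ is equivalent to three-potency of $\mathbf{A}$.

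First I would examine how the monoid operation $*$ behaves on the four possible cases of arguments drawn from $A$ versus $\neg A$, using the defining clauses in Definition~\ref{invol}. The key observation is that for any $\neg a \in \neg A$ we have $\neg a * \neg a = \neg 1$, which is the bottom element $0$ of $\mathbf{A}^*$; hence $(\neg a)^2 = 0$, and consequently $(\neg a)^3 = \neg a * 0 = 0$ as well, so $(\neg a)^2 \leq_{A^*} (\neg a)^3$ holds automatically (in fact with equality) for every element of the $\neg A$-part. Therefore the three-potency condition on $\mathbf{A}^*$ places no constraint on elements of $\neg A$ and is entirely determined by its behaviour on the $A$-part.

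Next I would check that for $a \in A$ the powers computed in $\mathbf{A}^*$ agree with those computed in $\mathbf{A}$: since $a * b \in A$ for $a,b \in A$ (the operation $*$ restricted to $A$ is just the original CIRL operation, as $\mathbf{A}$ embeds into $\mathbf{A}^*$), we get $a^2$ and $a^3$ lying in $A$ and coinciding with the corresponding powers in $\mathbf{A}$. Moreover clause~(1) of Definition~\ref{invol} says the order on $A$ is inherited unchanged, so $a^2 \leq_{A^*} a^3$ iff $a^2 \leq_A a^3$. Combining this with the previous paragraph yields that $\mathbf{A}^*$ is three-potent if and only if $a^2 \leq_A a^3$ for all $a \in A$, which is exactly the statement that $\mathbf{A}$ is a three-potent CIRL.

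Assembling the pieces: for the forward direction, if $\mathbf{A}^*$ is an $\NS$-algebra then it is three-potent, and restricting the three-potency inequality to $a \in A$ (where orders and powers agree) shows $\mathbf{A}$ is three-potent. For the converse, if $\mathbf{A}$ is three-potent then three-potency holds on the $A$-part of $\mathbf{A}^*$ by inheritance and holds trivially on the $\neg A$-part by the collapse $(\neg a)^2 = 0$, so $\mathbf{A}^*$ is a three-potent involutive CIBRL, hence an $\NS'$-algebra, hence an $\NS$-algebra. The main obstacle I anticipate is purely bookkeeping: carefully verifying the case analysis for the powers, in particular confirming that mixed products never escape back into $A$ in a way that could violate the inequality, and double-checking the identification $\neg 1 = 0$ in $\mathbf{A}^*$. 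None of this is conceptually deep, but the clauses of Definition~\ref{invol} must be applied with care to make the reduction to the $A$-part airtight.
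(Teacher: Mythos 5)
Your proof is correct and follows essentially the same route as the paper's: both directions reduce to three-potency of $\mathbf{A}^*$ (using the Galatos--Raftery fact that $\mathbf{A}^*$ is an involutive CIBRL and the term-equivalence of $\NS$- and $\NS'$-algebras), with the forward direction obtained by restricting to the subalgebra $\mathbf{A}$ and the converse by the case split $a \in A$ versus $a \in \neg A$, where $(\neg a)^2 = \neg 1 = (\neg a)^3$ makes the latter case trivial. Your write-up just spells out in more detail the bookkeeping (powers and order agreeing on the $A$-part) that the paper leaves implicit.
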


\begin{proof}
	One direction is immediate: if $\A^*$ is an $\NS$-algebra,
	then it is three-potent, hence so is $\A$ as a $\{ \land, \lor,  *, \Rightarrow, 1  \}$-subalgebra
	of $\A^*$.
	Conversely, if $\A$ is a three-potent CIRL, since we already know   that \NA$^{*}$ is a
	CIBRL,
	it remains to show that 
	$a^2 \leq a^3$ for all $a \in \mathbf{A^*}$. For $a \in A$ the result  follows from 3-potency of $\mathbf{A}$. 
	If $a \in \neg A$, then by Definition~\ref{invol} we have
	$a^2 = \neg 1 = a^3$.
\end{proof}

The following corollary concerns \emph{implicative lattices}, namely, CIRLs where $(\land, \Rightarrow)$ forms a residuated pair, and hence $\land$  and $*$ coincide. Implicative lattices are precisely the $0$-free subreducts of Heyting algebras.

\begin{corollary} 
	\label{c:imp}
	If \NA \ is either an implicative lattice
	or an $\NS$-algebra, 
	then \NA$^{*}$ is an $\NS$-algebra.
\end{corollary}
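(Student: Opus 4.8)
The plan is to reduce both cases to Proposition~\ref{p:gara}, which tells us that $\NA^{*}$ is an $\NS$-algebra precisely when $\NA$ is a three-potent CIRL. Thus it suffices to verify that, in each of the two cases, the algebra $\NA$ (or the relevant CIRL reduct thereof) is in fact a three-potent CIRL; once this is established, Proposition~\ref{p:gara} delivers the conclusion immediately.

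First I would handle the implicative lattice case. By definition an implicative lattice is a CIRL in which $(\land, \Rightarrow)$ forms a residuated pair, so that $*$ and $\land$ coincide. Since $\land$ is idempotent, we obtain $a^2 = a * a = a \land a = a$ for every element $a$, and hence $a^2 = a = a^3$. In particular $a^2 \leq a^3$, so $\NA$ is (trivially, indeed one-potent and a fortiori) three-potent, and Proposition~\ref{p:gara} applies. For the second case, suppose $\NA$ is an $\NS$-algebra. Its $\langle \land, \lor, *, \Rightarrow, 1 \rangle$-reduct is a CIRL; indeed, it is the $0$-free reduct of the $\NS'$-algebra associated with $\NA$ via Proposition~\ref{prop:slinha}. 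By Proposition~\ref{prop:coisas}.6 this reduct satisfies $a^2 \leq a^3$, so it is a three-potent CIRL, and Proposition~\ref{p:gara} again yields that $\NA^{*}$ is an $\NS$-algebra.

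There is essentially no serious obstacle in this argument: once Proposition~\ref{p:gara} is in place, both verifications are routine, each amounting to a one-line check of three-potency. The only point requiring minor care is that, in the $\NS$-algebra case, the doubling construction of Definition~\ref{invol} is applied to the CIRL reduct (obtained by forgetting $0$ and $\neg$), so one must confirm that three-potency, being a property of this reduct, is inherited correctly; this follows at once from Proposition~\ref{prop:coisas}.6, which is stated precisely for that reduct.
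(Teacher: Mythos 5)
Your proof is correct and matches the paper's (implicit) argument: the corollary is stated without proof precisely because it follows immediately from Proposition~\ref{p:gara}, with three-potency holding trivially for implicative lattices (where $*$ coincides with the idempotent $\land$) and holding for $\NS$-algebras by Proposition~\ref{prop:coisas}.6 applied to the CIRL reduct. Your added care about applying the doubling construction to the $0$-free reduct is a sensible clarification but does not change the substance.
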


In fact, it is not difficult to check that if $\A$ is an implicative lattice, then $\A^*$ is 
a special $\NS$-algebra known as
an \emph{$\mathcal{N}3$-lattice} (we shall deal with these structures in Section~\ref{ss:sixpointtwo}).	

\medskip
\begin{example}
	\label{nondistr}
	Consider the three-element  linearly ordered $MV$-algebra
	\cite[Definition 1.1.1]{CiMuOt99}, that we shall call  \L$_{3}$ (for \L ukasiewicz three-valued logic), defined as follows. 
	The universe is $ \{0, \frac{1}{2}, 1\}$ with the obvious lattice ordering.
	We consider  \L$_{3}$ in the algebraic language $\langle \land, \lor, *, \Rightarrow, \neg, 0, 1 \rangle$
	with the (non-lattice) operations being given by the following tables:
\end{example}
	%
	$$
	\begin{tabular}{r|rrr}
	$\Rightarrow$ & $0$ & $\frac{1}{2}$ & $1$ \\ \hline
	$0 $      & 1   & 1   & 1    \\ 
	$\frac{1}{2}$ &  $\frac{1}{2}$  & 1 & 1   \\ 
	$1$   &   $0$    &  $\frac{1}{2}$     & 1   \\ 
	\end{tabular}\qquad \begin{tabular}{r|rrr}
	$*$ & $0$ & $\frac{1}{2}$ & $1$ \\ \hline
	$0 $      & 0   & 0   & 0    \\ 
	$\frac{1}{2}$ &  $0$  & 0 & $\frac{1}{2}$   \\ 
	$1$   &   $0$    &  $\frac{1}{2}$     & 1   \\ 
	\end{tabular} \qquad \begin{tabular}{r|r}
	$\neg$ & \\ \hline
	$0 $            & 1   \\ 
	$\frac{1}{2}$ &    $\frac{1}{2}$   \\ 
	$1$    &    $0$     \\ 
	\end{tabular}$$
	%
	We note that \L$_{3}$ is an involutive CIBRL 
	(see, e.g.,~\cite[Lemma 1.1.4 and  Proposition 1.1.5]{CiMuOt99}). 
	It is also easy to check that
	\L$_{3}$ is three-potent, and so it is an $\NS$-algebra. 
	
	Applying the doubling construction to \L$_{3}$ we obtain the six-element linearly ordered
	$\NS$-algebra (\L$_{3})^*$ with universe
	$ \{\neg 1, \neg \frac{1}{2}, \neg 0, 0, \frac{1}{2}, 1\}$.
	The lattice
	operations are determined in the obvious way and the implication is given by the table below.
	
	\begin{center}
		\begin{tabular}{r|rrrrrr}
			$\Rightarrow$ & $\neg 1$ & $\neg \frac{1}{2}$ & $\neg 0$ & 0 & $\frac{1}{2}$ & 1 \\ \hline
			$\neg 1  $            & 1    & 1               & 1      & 1 & 1                & 1 \\ 
			$\neg \frac{1}{2}$ &    $\frac{1}{2}$    & 1          & 1      & 1 &     1     & 1 \\ 
			$\neg 0$    &    $0$    &    $\frac{1}{2}$     & 1      & 1 & 1      & 1 \\ 
			$0$              &   $\neg 0$    &     $ \neg 0$ &  $\neg 0$  & 1 & 1 & 1 \\ 
			$\frac{1}{2}$ & $\neg \frac{1}{2}$  & $\neg 0$ & $\neg 0$ & $\frac{1}{2}$ & 1 & 1 \\ 
			1           &   $\neg 1  $   &   $\neg \frac{1}{2} $  &  $\neg 0 $ & $0$ & $\frac{1}{2}$ & 1
		\end{tabular}
	\end{center}
	(\L$_{3})^*$
	is an example of an $\NS$-algebra which is distributive but fails to satisfy the 
	equation 
	corresponding to 
	Nelson axiom from
	Proposition~\ref{p:51}.9.
	Setting $\V(p) := \neg 0$ and $\V(q) := \neg \frac{1}{2}$, we have
	$(((\neg 0)^{2} \Rightarrow \neg \frac{1}{2}) \land ((\neg \neg \frac{1}{2})^{2} \Rightarrow \neg \neg 0)) \Rightarrow (\neg 0 \Rightarrow \neg \frac{1}{2}) 
	= 
	((\neg 1 \Rightarrow \neg \frac{1}{2}) \land ( 0 \Rightarrow 0)) \Rightarrow \frac{1}{2} 
	= 
	( 1 \land 1 ) \Rightarrow \frac{1}{2} 
	= \frac{1}{2}$.
\smallskip


\section{
	$\mathcal{N}3$ and  $\mathcal{N}4$}
\label{s:six}

As mentioned earlier, David Nelson is remembered
for having introduced, besides~$\NS$, two better-known logics:
$\mathcal{N}3$, which is usually called just 
\emph{Nelson logic}~\cite{Nel49}, and
$\mathcal{N}4$ which is known as \emph{paraconsistent Nelson logic}~\cite{AlNe84}. 
Both logics are algebraizable with respect to classes of residuated structures (called, respectively, \emph{$\mathcal{N}3$-lattices} or \emph{Nelson algebras}, and \emph{$\mathcal{N}4$-lattices}).
The question then arises
of what is precisely the relation between $\NS$ and these other logics,
or equivalently  between $\NS$-algebras, $\mathcal{N}3$-lattices and $\mathcal{N}4$-lattices.
Can we meaningfully say that one is stronger than the other?
By looking at their algebraic models, it will not be difficult to show that $\mathcal{N}3$
(which is known to be an axiomatic strengthening of $\mathcal{N}4$)
can also be viewed as an axiomatic strengthening of~$\NS$, while $\mathcal{N}4$ and $\NS$ do not seem to be comparable
in any meaningful way.
Just to fix terminology for what follows, we shall say that a logic~$\mathcal{L}^\prime$ is a \emph{conservative expansion} of a logic~$\mathcal{L}$ when the language of~$\mathcal{L}^\prime$ expands that of~$\mathcal{L}$ and yet the consequence relations of both logics coincide on the common formulas.


\subsection{
	$\mathcal{N}4$}
\label{ss:sixpointone}

\medskip

\begin{definition}
	$\mathcal{N}4 := \langle \mathbf{Fm}, \vdash_{\mathcal{N}4} \rangle$ is 
	the sentential
	logic in the language $\langle \land, \lor, \to, \neg \rangle $ of type $\langle 2, 2, 2, 1 \rangle $
	defined by the Hilbert-style calculus with the following axiom schemata and \textit{modus ponens} as its only rule schema:
	
	\begin{description}
		\item[\texttt{(N1)}] \qquad $\phi \to (\psi \to \phi)$
		\item[\texttt{(N2)}] \qquad $(\phi \to (\psi \to \gamma)) \to ((\phi \to \psi) \to (\phi \to \gamma))$
		\item[\texttt{(N3)}]\qquad  $(\phi \land \psi) \to \phi$
		\item[\texttt{(N4)}] \qquad $(\phi \land \psi) \to \psi$
		\item[\texttt{(N5)}] \qquad $(\phi \to \psi) \to ((\phi \to \gamma) \to (\phi \to (\psi \land\gamma)))$
		\item[\texttt{(N6)}] \qquad $\phi \to (\phi \lor \psi)$
		\item[\texttt{(N7)}] \qquad $\psi \to (\phi \lor \psi)$
		\item[\texttt{(N8)}] \qquad $(\phi \to \gamma) \to ((\psi \to \gamma) \to ((\phi \lor \psi) \to \gamma))$
		\item[\texttt{(N9)}] \qquad $\neg \neg \phi \leftrightarrow \phi$
		\item[\texttt{(N10)}] \quad \; $\neg (\phi \lor\psi) \leftrightarrow (\neg \phi\land\neg \psi)$
		\item[\texttt{(N11)}] \quad \; $\neg (\phi \land\psi) \leftrightarrow (\neg \phi\lor\neg \psi)$
		\item[\texttt{(N12)}] \quad \; $\neg (\phi \to \psi) \leftrightarrow (\phi\land\neg \psi)$
	\end{description}
\end{definition}
Here $\phi \leftrightarrow \psi$ abbreviates 
$(\phi \to \psi) \land ( \psi \to  \phi)$.
The implication $\to$ in $\mathcal{N}4$ is usually called \emph{weak implication}, in contrast to the \emph{strong implication} $\Rightarrow$ that is defined 
by the following term: 
\vspace{-1mm}
$$
\phi \Rightarrow \psi :=  (\phi \to \psi) \land (\neg \psi \to \neg \phi).
$$
As the notation suggests, it is the strong implication, not the weak one,
that we shall compare with the implication of~$\NS$.
This appears indeed to be the more meaningful choice, as  explained below.
%

A 
remarkable
feature of the weak implication 
of 
$\mathcal{N}4$ 
is that, on the one hand (unlike the implication of~$\NS$), 
it enjoys the Deduction-Detachment Theorem in its standard formulation:
$\Gamma \cup \{\varphi\} \vdash_{\mathcal{N}4} \psi $ if and only if  $  
\Gamma \vdash_{\mathcal{N}4} \varphi  \to \psi
$. On the other hand, 
contraposition fails
($ \varphi \to  \psi  \not  \vdash_{\mathcal{N}4} \neg \psi \to \neg \varphi$),
and
the corresponding
`weak bi-implication' 
(again unlike $\NS$, 
as axiom \texttt{(A5)} in Definition~\ref{nelsons} makes clear)
%
does not satisfy the following congruence property: 
$  \vdash_{\mathcal{N}4} \varphi \leftrightarrow \psi  $
need not imply 
$  \vdash_{\mathcal{N}4} \neg \varphi \leftrightarrow \neg \psi  $.
By contrast, the strong implication of $\mathcal{N}4$  does not enjoy the Deduction-Detachment Theorem
but (like the implication of~$\NS$) it satisfies  contraposition, and the associated bi-implication
$(\phi \Rightarrow \psi) \land (\psi \Rightarrow \phi )$
enjoys
the congruence property. The same considerations  apply to the logic $\mathcal{N}3$ considered in the next subsection. 


It is well known~\cite[Theorem 2.6]{Ri11c} that
$\mathcal{N}4$ is algebraizable (though not implicative)
with defining equation
$\mathsf{E}(\phi) := \{\phi \approx \phi \to \phi \}$ 
and equivalence formulas  
$\Delta(\phi, \psi) := \{ \phi \Rightarrow \psi, \psi \Rightarrow \phi \} $.
The implication in $\mathsf{E}(\phi)$ 
could as well be taken to be the strong one,
so 
$\mathsf{E}(\phi) := \{\phi \approx \phi \Rightarrow \phi \}$
would work too. By contrast, 
letting
$\Delta(\phi, \psi) := \{ \phi \to \psi, \psi \to \phi \} $
or the equivalent 
$\Delta(\phi, \psi) := \{ \phi \leftrightarrow \psi \} $
would not work 
precisely because of the failure of the above-mentioned
congruence property.

The equivalent algebraic semantics of $\mathcal{N}4$ is the class of $\mathcal{N}4$-lattices defined below
\cite[Definition 8.4.1]{Od08}:

\begin{definition} 
	\label{def:n4}
	An algebra $\NA := \langle A, \lor, \land, \to, \neg  \rangle$ of type of type $\langle 2, 2, 2, 1 \rangle$ is an \emph{$\mathcal{N}4$-lattice} if it satisfies the following properties:
	\vspace{-1mm}
	\begin{enumerate}
		\item $\langle A, \lor, \land, \neg  \rangle$ is a De Morgan lattice.
		\item The relation  $\preceq$ defined for all $a, b \in A$ by $a \preceq b$ iff $(a \to b) \to (a \to b) = a \to b$ is a pre-order on $\NA$.
		\item The relation $\equiv $ defined for all $a, b \in A$ as $a \equiv b$  iff $a \preceq b$ and $b \preceq a$ 
		is compatible with 
		$\land, \lor, \to$ and the quotient  $\NA_{\bowtie}$ := $\langle A, \lor, \land, \to  \rangle$/$\equiv$ is an implicative lattice. 
		\item For all $a, b \in A$, $\neg(a \to b) \equiv a  \land \neg b$.
		\item For all $a, b \in A$, $a \leq b$ iff $a \preceq b$ and $\neg b \preceq \neg a$, where $\leq$ is the lattice order of $\NA$.
	\end{enumerate}
\end{definition}
Despite this somewhat exotic definition, the class of $\mathcal{N}4$-lattices 
can actually be axiomatized by equations only~\cite[Definition 8.5.1]{Od08}. 

A simple example of an $\mathcal{N}4$-lattice is 
$\mathbf{A_4}$, shown in Figure~2
, 
whose lattice reduct is the four-element
De Morgan algebra.

\begin{figure}[h]
	\label{a4}
	
	$$\begin{tikzpicture}[scale=.9]
	\node (1) at (0,1) {$1$};
	\node (n) at (-1,0)  {$n$};
	\node (b) at (1,0) {$b$};
	\node (0) at (0,-1) {$0$};
	
	\draw (1) -- (n) -- (0) -- (b) -- (1);
\end{tikzpicture}$$
\caption{$\mathbf{A_{4}}$}
\end{figure}
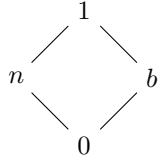
The tables for weak implication and negation in $\mathbf{A_4}$ are as follows:

$$
\begin{tabular}{l|l l l l}
$\to $  & 0  & $n$ & $b$ & 1 \\  \hline
0 & 1 & 1 & 1 & 1 \\ 
$n$ & 1 & 1 & 1 & 1 \\ 
$b$ & 0 & $n$ & $b$ & 1 \\ 
1 & 0 & $n$ & $b$ & 1  \\
\end{tabular}\qquad \qquad
\begin{tabular}{l|l}
$\neg$ &  \\ \hline
0 & 1 \\
$n$ & $n$ \\ 
$b$ & $b$ \\ 
1 & 0 \\ 
\end{tabular}$$ \\
%
One can check that $\mathbf{A_4}$ satisfies all properties of Definition~\ref{def:n4}, the quotient  $\mathbf{A_4}$/$\equiv$ mentioned in~Definition~\ref{def:n4}.3 being  the two-element Boolean algebra. 
It is also not difficult to see that no constant term is definable in $\mathbf{A_4}$. 
In fact, since the singleton $\{ b \}$ is a subuniverse
of $\mathbf{A_4}$, this element would be the only possible interpretation
for an algebraic constant. But $\{0,1 \}$ is also a subuniverse of $\mathbf{A_4}$, so 
$b$ cannot be the algebraic constant.
This implies that $\mathbf{A_4}$ has no term-definable $\NS$-algebra structure, and that
no constant term exists in the whole class of $\mathcal{N}4$-lattices.
In particular, neither  the equation $x \to x \approx y \to y $ nor $x \Rightarrow x \approx y \Rightarrow y$ hold in all
$\mathcal{N}4$-lattices.

In order to compare $\mathcal{N}4$ and $\NS$ we must fix a common propositional language, an
obvious choice being
$\langle \land, \lor, \to, \neg \rangle $, which is the primitive language of $\mathcal{N}4$ as introduced above. 
That is, we interpret the implication of~$\NS$ (up to now denoted $\Rightarrow$)
as the weak implication $\to$ of $\mathcal{N}4$. Under this interpretation, it is easy to check that for instance
the $\mathcal{N}4$ axiom \texttt{(N12)} is not 
provable
in~$\NS$
(Proposition~\ref{p:51}.6).
On the other hand, it is well known that 
the weak implication of $\mathcal{N}4$ does not satisfy 
the contraposition axiom
\texttt{(A5)} of our Definition~\ref{nelsons}:
$(\phi \to \psi) \leftrightarrow (\neg \psi \to \neg \phi)$.
Thus we must conclude that, over this language,  
$\mathcal{N}4$ and $\NS$ are incomparable.

As mentioned earlier, another possible choice for a common language would be
one that replaces~$\rightarrow$ by~$\Rightarrow$,
interpreting the original implication of~$\NS$ 
as the strong implication $\Rightarrow$ of $\mathcal{N}4$. This is also a sensible option, for
it has  been recently shown~\cite{Spin18} that the whole logic $\mathcal{N}4$
can be equivalently presented in this language: 
the weak implication
is term-definable in the $\langle \land, \lor, \Rightarrow, \neg \rangle $-fragment of   $\mathcal{N}4$ (namely, by setting $\phi \to \psi := \phi \land (((\phi \land(\psi \Rightarrow \psi)\Rightarrow \psi)\Rightarrow ((\phi \land (\psi \Rightarrow \psi)) \Rightarrow \psi))) \Rightarrow((\phi \land (\psi \Rightarrow \psi)) \Rightarrow \psi)$, see~\cite[Theorem 2.1]{Spin18}).

Under the latter interpretation, the above-mentioned contraposition axiom turns out to be valid in both logics. However, the fact that 
the equation $x \Rightarrow x \approx y \Rightarrow y$
does not hold in all 
$\mathcal{N}4$-lattices 
implies (via the algebraizability of $\mathcal{N}4$) 
that the formula 
$
(\phi \Rightarrow \phi ) \Rightarrow (\psi \Rightarrow \psi)
$, 
which is valid in $\NS$,
is not provable
in $\mathcal{N}$4. 
On the other hand,
the 
Distributivity axiom 
is valid in $\mathcal{N}4$ but not in~$\NS$, as we have seen
(Proposition~\ref{p:51}.8).
All the above arguments continue to hold also if we were to consider 
conservative expansions
of $\mathcal{N}4$ such as the logic
$\mathcal{N}4^{\bot}$ of~\cite{Od08}.

Taking into account the above observations, we conclude the following.

\begin{proposition}{$\mathcal{N}4$ (together with all of its conservative expansions) and $\NS$ are incomparable 
		over either language $\langle \land, \lor, \to, \neg \rangle $ or 
		$\langle \land, \lor, \Rightarrow, \neg \rangle $.
	}
\end{proposition}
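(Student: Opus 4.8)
The plan is to establish incomparability over each of the two candidate languages separately, by exhibiting, in each direction, a formula valid in one logic but not provable in the other. Since both $\mathcal{N}4$ and $\NS$ are algebraizable (the former with respect to $\mathcal{N}4$-lattices by~\cite[Theorem 2.6]{Ri11c}, the latter with respect to $\NS$-algebras by Theorem~\ref{th:eqfor}), I would phrase each non-provability claim as the failure of the corresponding equation in some concrete algebra of the relevant class, invoking~\cite[Lemma 2.6]{Font16} exactly as in the proof of Proposition~\ref{p:51}. The two small algebras $\mathbf{A_4}$ (an $\mathcal{N}4$-lattice, Figure~2) and $\mathbf{A_8}$ (an $\NS$-algebra, Figure~1) furnish all the counter-models I need, so no new computations beyond those already tabulated are required.

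First I would treat the language $\langle \land, \lor, \to, \neg \rangle$, reading the implication of~$\NS$ as the \emph{weak} implication $\to$ of $\mathcal{N}4$. In this reading the contraposition axiom \texttt{(A5)}, $(\phi \to \psi) \leftrightarrow (\neg \psi \to \neg \phi)$, is valid in $\NS$ but famously fails for the weak implication of $\mathcal{N}4$; a witness is readily found in $\mathbf{A_4}$ using its tables for $\to$ and $\neg$. Conversely, axiom \texttt{(N12)} of $\mathcal{N}4$, namely $\neg(\phi \to \psi) \leftrightarrow (\phi \land \neg \psi)$, is not provable in~$\NS$: this is precisely the content of Proposition~\ref{p:51}.6 (whose one half is the formula $(p \land \neg q) \Rightarrow \neg(p \Rightarrow q)$ refuted in $\mathbf{A_8}$). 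Exhibiting one valid-but-unprovable formula in each direction settles incomparability over this language.

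Next I would treat the language $\langle \land, \lor, \Rightarrow, \neg \rangle$, reading the implication of~$\NS$ as the \emph{strong} implication $\Rightarrow$ of $\mathcal{N}4$. Here contraposition is now valid in both logics, so I need different separating formulas. In one direction, since no constant term is definable in $\mathbf{A_4}$ (the equation $x \Rightarrow x \approx y \Rightarrow y$ fails in the class of $\mathcal{N}4$-lattices), the formula $(\phi \Rightarrow \phi) \Rightarrow (\psi \Rightarrow \psi)$ is valid in $\NS$ but not provable in $\mathcal{N}4$; I would read this off from the non-constancy of $x \Rightarrow x$ on $\mathbf{A_4}$ via algebraizability. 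In the other direction, the Distributivity axiom is valid in $\mathcal{N}4$ (whose lattice reducts are distributive De Morgan lattices) but fails in~$\NS$, witnessed by the non-distributive algebra $\mathbf{A_8}$ as recorded in Proposition~\ref{p:51}.8.

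The statement also asks me to cover \emph{all conservative expansions} of $\mathcal{N}4$, such as $\mathcal{N}4^{\bot}$. The key observation here is that for a conservative expansion the consequence relation restricted to the common language is unchanged, so every separating formula I produced lives in the shared fragment and its provability status is identical in $\mathcal{N}4$ and in any conservative expansion; hence the same witnesses apply verbatim. The only genuine subtlety — and thus the part deserving the most care — is the direction showing $(\phi \Rightarrow \phi) \Rightarrow (\psi \Rightarrow \psi)$ unprovable in $\mathcal{N}4$: unlike the others, this does not reduce to evaluating a single valuation at a designated element but hinges on the structural fact that $\mathcal{N}4$-lattices admit no term-definable constant, so I would make sure the argument goes cleanly through the algebraizability of $\mathcal{N}4$ and the two incomparable subuniverses $\{b\}$ and $\{0,1\}$ of $\mathbf{A_4}$ already noted in the text. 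All remaining steps are routine table look-ups.
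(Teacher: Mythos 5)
Your proposal is correct and follows essentially the same route as the paper: the same separating formulas (contraposition \texttt{(A5)} and $(\phi \Rightarrow \phi)\Rightarrow(\psi \Rightarrow \psi)$ on the $\NS$ side, \texttt{(N12)} and Distributivity on the $\mathcal{N}4$ side), the same counter-models $\mathbf{A_4}$ and $\mathbf{A_8}$, the same appeal to algebraizability and to the two subuniverses $\{b\}$ and $\{0,1\}$ of $\mathbf{A_4}$, and the same observation that conservative expansions leave the shared-language fragment untouched. No gaps.
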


In the next section we are going to see that at least in the case of the logic $\mathcal{N}3$ the second choice of language allows us to show that the two logics are indeed comparable, with $\NS$ being the deductively weaker among them.

\subsection{
	$\mathcal{N}3$}
\label{ss:sixpointtwo}

Nelson's logic $\mathcal{N}3 := \langle \mathbf{Fm}, \vdash_{\mathcal{N}3} \rangle$ is
the axiomatic strengthening of $\mathcal{N}4$ obtained by adding the following axiom:
\begin{description}
	\item[\texttt{(N13)}] $\neg \phi \to (\phi \to \psi)$
\end{description}

\noindent 
As an axiomatic strengthening of $\mathcal{N}4$, we have that 
$\mathcal{N}3$ is also algebraizable with the same 
defining equation and equivalence formulas.
$\mathcal{N}3$  is in fact implicative, and its equivalent algebraic semantics is the variety of 
$\mathcal{N}3$-lattices, 
which are just $\mathcal{N}4$-lattices satisfying the equation corresponding to
the above axiom (namely, $ \neg x \to (x \to y) \approx x \to x$) or, equivalently, 
$ x \to x  \approx y \to y$ (which forces integrality). 
The latter equation implies that each $\mathcal{N}3$-lattice has two algebraic constants, given by $1 := x \rightarrow x$ and $0 := \neg 1$.

In his 1959 paper \cite[p.~215]{Nel59}, Nelson mentioned that a calculus for $\mathcal{N}3$ (there denoted by $N$) could be obtained from 
his calculus for $\NS$ by removing certain rules and adding others, thus leaving it unclear whether one logic 
could be viewed as a strengthening of the other. 
Our algebraizability result for $\NS$ 
gives us a way to settle this issue.

As in the preceding subsection, we may compare $\NS$ and $\mathcal{N}3$
over the languages $\langle \land, \lor, \to, \neg, 0, 1 \rangle $ and 
$\langle \land, \lor, \Rightarrow, \neg, 0, 1 \rangle $, this time including the 
propositional constants which  are term-definable in both logics.
The first option  yields no new results, for the arguments of the preceding subsection continue to hold for 
$\mathcal{N}3$ too.
Thus, $\NS$ and $\mathcal{N}3$ are also incomparable over $\langle \land, \lor, \to, \neg, 0, 1 \rangle $.
The second option instead gives us the following.


%
%
%


\begin{proposition}
	\label{prop:propex}
	{$\mathcal{N}3$ is a (proper) strengthening of~$\NS$ over the language $\langle \land, \lor, \Rightarrow, \neg, 0, 1 \rangle $. 
	}
\end{proposition}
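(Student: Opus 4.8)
The plan is to prove the statement in two parts: first that $\mathcal{N}3$ is \emph{at least as strong} as $\NS$ over the common language $\langle \land, \lor, \Rightarrow, \neg, 0, 1 \rangle$, and then that the strengthening is \emph{proper}. Since both $\NS$ and $\mathcal{N}3$ are algebraizable (with the \emph{same} defining equations and equivalence formulas, as both use $\mathsf{E}(x) := \{x \approx x \Rightarrow x\}$ and $\Delta(x,y) := \{x \Rightarrow y, y \Rightarrow x\}$ relative to the strong implication), the comparison of the two consequence relations reduces, via the standard translation between algebraizable logics and their equivalent algebraic semantics, to a comparison of the corresponding classes of algebras. So the main work is algebraic: I would show that, in the appropriate common algebraic language, every $\mathcal{N}3$-lattice is (term-equivalent to) an $\NS$-algebra, and that the inclusion of classes is strict.

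First I would fix the common language precisely. The strong implication $\Rightarrow$ of $\mathcal{N}3$ is defined from its primitive weak implication by $\phi \Rightarrow \psi := (\phi \to \psi) \land (\neg \psi \to \neg \phi)$, and the constants $1 := x \to x$, $0 := \neg 1$ are term-definable in $\mathcal{N}3$ (recall that the defining equation $x \to x \approx y \to y$ holds in all $\mathcal{N}3$-lattices, which is exactly what fails for $\mathcal{N}4$). I would then verify that the $\langle \land, \lor, \Rightarrow, \neg, 0, 1 \rangle$-reduct of every $\mathcal{N}3$-lattice satisfies the defining equations of an $\NS'$-algebra, i.e.\ that it is a three-potent involutive CIBRL with $\Rightarrow$ as the residuum of $* := \neg(\cdot \Rightarrow \neg \cdot)$. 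The involutive law $\neg \neg x \approx x$ is immediate from axiom \texttt{(N9)}; contraposition for $\Rightarrow$ follows from its very definition; residuation and integrality for $\mathcal{N}3$-lattices are well known (they are precisely the features distinguishing $\mathcal{N}3$ from $\mathcal{N}4$). The one point requiring care is \textbf{three-potency} ($x^2 \approx x^3$): I would check that this holds in every $\mathcal{N}3$-lattice, likely by appealing to the known structure theory of Nelson algebras (e.g.\ their twist-construction over Heyting algebras, which makes the monoid idempotent-like behaviour transparent). Granting this, the class of $\langle \land, \lor, \Rightarrow, \neg, 0, 1 \rangle$-reducts of $\mathcal{N}3$-lattices is contained in the class of $\NS'$-algebras, whence by the algebraization machinery (and Theorem~\ref{th:s=slinha}) we obtain $\vdash_{\NS} \ \subseteq \ \vdash_{\mathcal{N}3}$ on the common formulas.

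To establish that the strengthening is \emph{proper}, it suffices to exhibit a single formula provable in $\mathcal{N}3$ but not in $\NS$, or equivalently an $\NS$-algebra that is not (term-equivalent to) an $\mathcal{N}3$-lattice. The natural witness is \textbf{distributivity}: $\mathcal{N}3$-lattices are always distributive (their lattice reducts arise from Heyting algebras), whereas Proposition~\ref{p:51}.8 together with the explicit non-distributive algebra $\mathbf{A_8}$ of Section~\ref{s:five} shows that the Distributivity axiom fails in $\NS$. Thus the Distributivity formula is valid in $\mathcal{N}3$ but, as witnessed by $\mathbf{A_8}$, not provable in $\NS$, giving proper containment. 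Alternatively, one could invoke Nelson's own axiom \texttt{(N13)} (or its algebraic counterpart) as the separating formula, but distributivity is cleaner since the counter-model is already in hand.

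I expect the \textbf{main obstacle} to be the algebraic direction establishing containment: one must genuinely verify that the $\Rightarrow$-based reduct of an arbitrary $\mathcal{N}3$-lattice satisfies \emph{all} the $\NS'$-algebra postulates, and in particular that the defined operation $* = \neg(\cdot \Rightarrow \neg \cdot)$ really is a commutative monoid operation residuated by $\Rightarrow$ and satisfying three-potency. This is where the interplay between the weak and strong implications of $\mathcal{N}3$ is most delicate, and where the bulk of the routine-but-nontrivial computation (or appeal to the twist-structure representation of Nelson algebras) resides. The properness direction, by contrast, is essentially free once $\mathbf{A_8}$ and Proposition~\ref{p:51} are available.
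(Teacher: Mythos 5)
Your proposal is correct and follows essentially the same route as the paper: reduce the comparison to the algebra classes via algebraizability (every $\mathcal{N}3$-lattice, in its $\langle \land, \lor, \Rightarrow, \neg, 0, 1 \rangle$-reduct, is an $\NS$-algebra), then obtain properness from distributivity, with the non-distributive algebra $\mathbf{A_{8}}$ of Proposition~\ref{p:51}.8 as the separating witness. The only difference is that the paper discharges the containment step --- which you rightly single out as the main work, including residuation and three-potency of the strong-implication reduct --- by citing \cite[Theorem 3.12]{SpVe08} rather than verifying the $\NS'$-algebra postulates directly.
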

\begin{proof}
	It follows from~\cite[Theorem 3.12]{SpVe08} that (the $\langle \land, \lor, *, \Rightarrow, \neg, 0, 1 \rangle $-reduct of) every $\mathcal{N}3$-lattice
	satisfies all properties of our Definition~\ref{def:slinha}, and thus is an
	$\NS$-algebra.
	On the other hand, 
	$\mathcal{N}3$-lattices (like $\mathcal{N}4$-lattices) are distributive, while  $\NS$-algebras need not be.
	Thus, invoking the algebraizability of $\mathcal{N}3$ and of~$\NS$ once more, 
	we have that $\mathcal{N}3$ is a 
	\emph{proper}
	strengthening of~$\NS$.
\end{proof}

Taking into account the axiomatization of $\mathcal{N}3$ given in~\cite[p.~326]{SpVe08},
we can add further information to
the preceding proposition 
by saying that $\mathcal{N}3$ can be viewed as the \emph{axiomatic} strengthening
of~$\NS$ obtained by adding the (Distributivity) and the (Nelson) axioms from
Proposition~\ref{p:51}.
%
%
One can, in fact,
do even better, showing that an $\NS$-algebra satisfying
the equation 
corresponding to
the (Nelson) axiom must
satisfy (Distributivity) as well
\cite[Remark 3.7]{BuCi10}.
Thus we obtain the following.

\begin{proposition}
	\label{prop:n3ext}
	$\mathcal{N}3$ 
	over the language $\langle \land, \lor, \Rightarrow, \neg, 0, 1 \rangle $ is
	the axiomatic strengthening of 
	$\NS$ by the (Nelson) axiom.
\end{proposition}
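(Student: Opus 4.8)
The plan is to prove Proposition~\ref{prop:n3ext} by leveraging the previously established Proposition~\ref{prop:propex}, which already identifies $\mathcal{N}3$ (over the language $\langle \land, \lor, \Rightarrow, \neg, 0, 1 \rangle$) as the axiomatic strengthening of~$\NS$ obtained by adding \emph{both} the (Nelson) and the (Distributivity) axioms. Thus the only thing left to show is that, in the presence of the $\NS$-algebra axioms, the (Nelson) axiom already implies (Distributivity); once this is in hand, adding (Nelson) alone yields the same logic as adding both, and hence exactly $\mathcal{N}3$.

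First I would translate the problem from the logical side to the algebraic side. Since both $\NS$ and $\mathcal{N}3$ are algebraizable (by Theorem~\ref{th:eqfor} and the discussion in Subsection~\ref{ss:sixpointtwo}) with the same defining equation $\mathsf{E}(x):=\{x \approx 1\}$ and equivalence formulas $\Delta(x,y):=\{x\Rightarrow y,\, y\Rightarrow x\}$, adding an axiom to the logic corresponds precisely to imposing the associated equation ($\varphi \approx 1$) on the variety of $\NS$-algebras. Concretely, it suffices to prove the following equational implication: \emph{every $\NS$-algebra satisfying the equation corresponding to the (Nelson) axiom, namely $((x^2 \Rightarrow y) \land ((\neg y)^2 \Rightarrow \neg x)) \Rightarrow (x \Rightarrow y) \approx 1$, also satisfies the equation corresponding to (Distributivity)}. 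Because $\NS$-algebras are term-equivalent to $\NS'$-algebras (Theorem~\ref{th:s=slinha}), I would carry out this verification using the residuated-lattice presentation, where I have full access to residuation, commutativity and associativity of $*$, involutivity of $\neg$, and three-potency.

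The main step is therefore this purely algebraic lemma, and here I would simply invoke the cited result: by \cite[Remark 3.7]{BuCi10}, an $\NS$-algebra satisfying the (Nelson) equation must satisfy (Distributivity). If one wanted a self-contained argument rather than a citation, the strategy would be to exploit the interplay between the lattice order and the monoidal structure: using residuation one rewrites the (Nelson) equation as the order-inequality $(x^2 \Rightarrow y) \land ((\neg y)^2 \Rightarrow \neg x) \leq x \Rightarrow y$, and then combines this with the standard CIRL distributivity-type facts already recorded in Lemma~\ref{lemma1} (in particular $(x\lor y)^2 \approx x^2 \lor y^2$ and the monoid distributing over joins) to force the lattice meet to distribute over joins. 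The technical heart is showing that the characteristic Nelson inequality, together with involution and three-potency, rules out the non-distributive configurations; the algebra $\mathbf{A_8}$ of Section~\ref{ss:fiveone} serves as a sanity check, being a non-distributive $\NS$-algebra that (necessarily) \emph{fails} the (Nelson) equation.

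The hard part will be the algebraic lemma itself, i.e.\ deriving (Distributivity) from (Nelson) within the three-potent involutive CIBRL framework; this is a delicate equational manipulation rather than a conceptual difficulty, and it is precisely the content that \cite{BuCi10} supplies. Once that lemma is secured, the conclusion is immediate: adding (Nelson) to $\NS$ forces (Distributivity) as well, so the axiomatic strengthening of $\NS$ by (Nelson) coincides with the strengthening by (Nelson) \emph{and} (Distributivity), which by Proposition~\ref{prop:propex} is exactly $\mathcal{N}3$ over $\langle \land, \lor, \Rightarrow, \neg, 0, 1 \rangle$. I would close by restating this chain of equalities explicitly to make the identification with $\mathcal{N}3$ transparent.
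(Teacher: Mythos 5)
Your proposal is correct and takes essentially the same approach as the paper: reduce to the algebraic claim that any $\NS$-algebra satisfying the (Nelson) equation is distributive, citing the very same \cite[Remark 3.7]{BuCi10}, and conclude via the identification of $\mathcal{N}3$ with the axiomatic strengthening of $\NS$ by (Nelson) together with (Distributivity). The only slight inaccuracy is attributing that identification to Proposition~\ref{prop:propex} itself, which asserts only that $\mathcal{N}3$ is a proper strengthening of $\NS$; the paper obtains the two-axiom presentation from the axiomatization of $\mathcal{N}3$ in \cite[p.~326]{SpVe08}, in the discussion following that proposition.
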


It is not difficult to verify (see  Example \ref{nondistr})
that  adding 
(Distributivity) 
to $\NS$ does not allow us to prove (Nelson).
%
Thus, if we do so,
we obtain a distinct logic that is intermediate between $\NS$ and $\mathcal{N}3$. 
On the other hand,  the weakest strengthening of both 
$\NS$ and $\mathcal{N}4$ is the logic $\mathcal{N}3$ itself. 
%
To see this,
recall that $\NS$-algebras are integral residuated lattices, and therefore 
satisfy the equation $x \Rightarrow x \approx y \Rightarrow y$.
Now, an $\mathcal{N}4$-lattice satisfying such equation
(i.e.~an algebra that is at the same time an  $\mathcal{N}4$-lattice and an $\NS$-algebra)
must actually be an $\mathcal{N}3$-lattice. This can be easily checked using Odintsov's
\emph{twist-structure} representation of 
$\mathcal{N}4$-lattices~\cite[Proposition 8.4.3]{Od08}.
%
Thus,
$\mathcal{N}3$ is the join of
$\NS$ and $\mathcal{N}4$ in the lattice of all strengthenings of~$\NS$.

\begin{proposition} 
	$\mathcal{N}3\text{-lattices} = \NS\text{-algebras} \cap \mathcal{N}4\text{-lattices}$.
\end{proposition}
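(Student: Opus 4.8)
The plan is to establish the two set-theoretic inclusions separately, since one direction is essentially already available from the material developed above while the other requires an external representation theorem.

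For the inclusion $\mathcal{N}3\text{-lattices} \subseteq \NS\text{-algebras} \cap \mathcal{N}4\text{-lattices}$, I would observe that both memberships have already been argued. That every $\mathcal{N}3$-lattice is an $\mathcal{N}4$-lattice is immediate from the definition, since $\mathcal{N}3$-lattices are by stipulation exactly those $\mathcal{N}4$-lattices validating the extra equation $x \to x \approx y \to y$. That every $\mathcal{N}3$-lattice is (the reduct of) an $\NS$-algebra is precisely the content of Proposition~\ref{prop:propex}, which in turn rests on~\cite[Theorem 3.12]{SpVe08}. Combining these two facts yields the first inclusion.

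For the converse inclusion $\NS\text{-algebras} \cap \mathcal{N}4\text{-lattices} \subseteq \mathcal{N}3\text{-lattices}$, let $\A$ be an algebra that is simultaneously an $\NS$-algebra and an $\mathcal{N}4$-lattice, where the intersection is understood to be taken in the fixed common language $\langle \land, \lor, \Rightarrow, \neg, 0, 1 \rangle$ (the weak implication being term-defined on the $\NS$-algebra side via Theorem~\ref{th:ddt}, and the constants $0,1$ and strong implication being term-definable on the $\mathcal{N}4$-lattice side). The key observation is that, \emph{qua} $\NS$-algebra, $\A$ is an integral residuated lattice; hence its top element coincides with the monoid unit, which amounts to the equation $x \Rightarrow x \approx y \Rightarrow y$, equivalently $x \to x \approx y \to y$. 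It then suffices to show that any $\mathcal{N}4$-lattice validating this integrality equation is already an $\mathcal{N}3$-lattice, which is exactly the claim recalled in the paragraph preceding the proposition.

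The main obstacle is precisely this last step, which I would discharge through Odintsov's twist-structure representation of $\mathcal{N}4$-lattices~\cite[Proposition 8.4.3]{Od08}: every $\mathcal{N}4$-lattice is isomorphic to a twist structure built over an implicative lattice, and the equation $x \to x \approx y \to y$ forces that underlying structure to acquire a top element (and, dually through involution, the bottom $0 := \neg 1$), thereby upgrading it to a bounded implicative lattice, i.e.\ a Heyting algebra. Since the twist structure over a Heyting algebra is exactly an $\mathcal{N}3$-lattice, we conclude that $\A$ is an $\mathcal{N}3$-lattice, which establishes the second inclusion and hence the desired equality. The only delicate point to keep under control throughout is the bookkeeping of term-definable operations across the two languages, so that the constant $1$ singled out by integrality on the residuated-lattice side is genuinely the one that trivialises the twist construction into the $\mathcal{N}3$ case.
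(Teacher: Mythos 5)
Your proof follows the same skeleton as the paper's: the inclusion $\mathcal{N}3\text{-lattices} \subseteq \NS\text{-algebras} \cap \mathcal{N}4\text{-lattices}$ via Proposition~\ref{prop:propex} (i.e.\ \cite[Theorem 3.12]{SpVe08}) together with the definition of $\mathcal{N}3$-lattices as the $\mathcal{N}4$-lattices satisfying $x \to x \approx y \to y$, and the converse via integrality of $\NS$-algebras (which yields $x \Rightarrow x \approx y \Rightarrow y$) followed by the claim that an $\mathcal{N}4$-lattice satisfying this equation is an $\mathcal{N}3$-lattice, discharged through Odintsov's twist-structure representation. The paper leaves that last claim as ``easily checked''; it is in your attempt to actually check it that a genuine error appears.

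You assert that ``the twist structure over a Heyting algebra is exactly an $\mathcal{N}3$-lattice''. That is false, and the paper's own algebra $\mathbf{A_4}$ refutes it: $\mathbf{A_4}$ is precisely the full twist structure over the two-element Boolean algebra, yet it is not an $\mathcal{N}3$-lattice (axiom \texttt{(N13)} fails in it, and indeed no constant is even term-definable). Boundedness of the factor algebra is not what separates $\mathcal{N}3$ from $\mathcal{N}4$; the separating feature is the normality condition $a_+ \wedge a_- = 0$ on pairs. Your parenthetical reasoning is also off target: the quotient $\A_{\bowtie}$ of any $\mathcal{N}4$-lattice already has a top element, since it is an implicative lattice; what may be missing is the bottom, and the involution of $\A$ does not descend to $\A_{\bowtie}$, so nothing can be obtained ``dually through involution''. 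The correct computation runs as follows: representing $a \in \A$ as a pair $(a_+, a_-)$, one has $a \to a = (1, a_+ \wedge a_-)$, so the equation $x \to x \approx y \to y$ says exactly that $a_+ \wedge a_-$ takes one fixed value $c$ for all $a \in \A$; since the first projection of the representation is onto $\A_{\bowtie}$, the element $c$ lies below every element of $\A_{\bowtie}$, hence is a bottom element --- whence $\A_{\bowtie}$ is a Heyting algebra \emph{and} every pair satisfies $a_+ \wedge a_- = c = 0$, which is precisely the condition characterizing $\mathcal{N}3$-lattices inside the twist representation. With this replacement your argument is complete and coincides with the paper's intended proof.
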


The following information on $\NS$-algebras is also obtained as a straightforward consequence of the previous results. 

\begin{proposition} 
	The variety of~$\NS$-algebras is not finitely generated.
\end{proposition}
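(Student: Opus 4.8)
The plan is to apply Jónsson's Lemma. Recall that in the proof of Theorem~\ref{th:wbso} we established that $\NS$-algebras form a variety with EDPC, and that EDPC implies congruence-distributivity \cite[Theorem 1.2]{BP-EDPC-I}. For a congruence-distributive variety, Jónsson's Lemma has the following standard consequence: if such a variety were finitely generated, say $V=\mathbb{V}(\mathbf{M})$ for a finite algebra $\mathbf{M}$, then every subdirectly irreducible member of $V$ would lie in $\mathbb{HSP}_U(\mathbf{M})$; since ultrapowers of a finite algebra are isomorphic to it, every subdirectly irreducible $\NS$-algebra would then have cardinality at most $|\mathbf{M}|$. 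Hence it suffices to exhibit subdirectly irreducible $\NS$-algebras of unbounded finite cardinality.

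To produce such a family I would use the Galatos--Raftery doubling construction of Subsection~\ref{ss:dou}. For each $n\geq 2$, let $\mathbf{G}_n$ be the $n$-element G\"odel chain, i.e. the finite totally ordered implicative lattice (so that $*$ coincides with $\land$). By Corollary~\ref{c:imp}, its double $\mathbf{G}_n^{*}$ is an $\NS$-algebra (in fact an $\mathcal{N}3$-lattice) of cardinality $2n$; as a lattice it is the chain $\neg 1 < \neg a_{n-1} < \cdots < \neg a_1 < a_1 < \cdots < a_{n-1} < 1$, where $a_1 < \cdots < a_{n-1} < a_n = 1$ are the elements of $\mathbf{G}_n$. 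Since $n$ is arbitrary, these algebras have unbounded cardinality, so the whole task reduces to showing that each $\mathbf{G}_n^{*}$ is subdirectly irreducible.

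This subdirect-irreducibility verification is the crux of the argument. Because $\NS$-algebras are (subtractive, involutive) CIBRLs, their congruences are in bijection with their deductive filters (the upward-closed submonoids containing $1$), and $\mathrm{Con}(\mathbf{G}_n^{*})$ is isomorphic to this filter lattice; thus subdirect irreducibility amounts to the filter lattice having a unique atom. I would check that no proper deductive filter of $\mathbf{G}_n^{*}$ contains a negated element: a filter containing some $\neg a_i$ with $i\geq 2$ would, being upward-closed, contain $\neg a_{i-1}$ as well, hence $\neg a_i * \neg a_{i-1} = \neg 1 = 0$ (see Definition~\ref{invol}), and so be improper; and the set $\{\neg a_1\}\cup A$ is not closed under $*$ since $\neg a_1 * \neg a_1 = \neg 1 = 0$. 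Consequently the proper deductive filters are exactly the up-sets $\uparrow a_i \subseteq A$, which form a chain whose least nontrivial member is $\{a_{n-1},1\}$. With a unique atom in its congruence lattice, $\mathbf{G}_n^{*}$ is subdirectly irreducible.

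Putting these together, the family $\{\mathbf{G}_n^{*} : n\geq 2\}$ consists of subdirectly irreducible $\NS$-algebras of unbounded cardinality $2n$, which contradicts the cardinality bound that finite generation would impose through Jónsson's Lemma \cite{BuSa00}; therefore the variety of $\NS$-algebras is not finitely generated (and, incidentally, neither is the subvariety of $\mathcal{N}3$-lattices, since the $\mathbf{G}_n^{*}$ belong to it). I expect the main obstacle to be precisely the subdirect-irreducibility step: one must argue carefully, via the filter/congruence correspondence, that the only proper deductive filters of the doubled chain sit inside its positive part and hence form a chain with a smallest nontrivial element. The remaining steps are routine bookkeeping with the doubling construction and a textbook application of Jónsson's Lemma.
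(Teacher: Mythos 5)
Your proof is correct, and it shares the paper's skeleton --- Jónsson's Lemma applied to a congruence-distributive variety, so that finite generation would force a finite bound on the size of subdirectly irreducible members --- but it diverges at the crucial step of producing the contradiction. The paper simply cites a known result of Odintsov \cite[Corollary 9.2.11]{Od08} that there are infinitely many subdirectly irreducible $\mathcal{N}3$-lattices, and combines it with the fact that every $\mathcal{N}3$-lattice is an $\NS$-algebra (Proposition~\ref{prop:propex}). You instead build the witnesses explicitly: the Galatos--Raftery doubles $\mathbf{G}_n^{*}$ of the $n$-element G\"odel chains (Corollary~\ref{c:imp}), whose subdirect irreducibility you verify by hand through the standard correspondence between congruences and deductive filters of CIBRLs (see, e.g., \cite{GaJiKoOn07}), showing that any filter meeting the negative part collapses to the improper one because $\neg a * \neg b = \neg 1 = 0$ in Definition~\ref{invol}, so the proper filters are the up-sets of the positive chain and the congruence lattice has a unique atom. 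Your filter analysis is sound (in fact it can be streamlined: any filter containing some $\neg a_i$ already contains $\neg a_i * \neg a_i = 0$, so your case split on $i \geq 2$ versus $i=1$ is unnecessary). The trade-off is clear: the paper's argument is three lines but leans on an external citation, while yours is self-contained modulo the textbook filter--congruence correspondence, and as a by-product it re-proves the relevant special case of Odintsov's result --- since each $\mathbf{G}_n^{*}$ is an $\mathcal{N}3$-lattice, your family simultaneously shows that the variety of $\mathcal{N}3$-lattices is not finitely generated either, a fact the paper takes as given.
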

\begin{proof} 
	Suppose, by contradiction, that the variety of~$\NS$-algebras
	was $V(K)$, with $K$ a finite set of finite algebras
	(see \cite[Definitions II.9.1 and II.9.4]{BuSa00}
	for the meaning of the $V, H$ and $S$  operators).
	Since $\NS$-algebras are congruence-distributive (this follows from~\cite[Theorem II.12.3]{BuSa00} and also from our
	Theorem~\ref{th:wbso}),
	we could apply J\'onsson's lemma~\cite[Corollary IV.6.10]{BuSa00} to conclude that the 
	subdirectly irreducible algebras of $V(K)$ are in $HS(K)$. Thus,
	there would be only finitely many subdirectly irreducible $\NS$-algebras.
	However, it is well known that there are infinitely many subdirectly irre\-du\-cible $\mathcal{N}3$-lattices
	(see, e.g., \cite[Corollary 9.2.11]{Od08}),
	and so there are infinitely many subdirectly irreducible $\NS$-algebras.
	%
\end{proof}



Concerning the relation between $\NS$-algebras and $\mathcal{N}3$-lattices, we can 
employ the construction introduced in Definition~\ref{invol}
to state an analogue of Proposition~\ref{p:gara}.

\begin{proposition}  
	\label{p:garan3}
	Let $\A^*$ be an $\NS$-algebra
	constructed according to Definition~\ref{invol}. 
	Then:
	\begin{enumerate}
		\item $\A^*$ is an  $\mathcal{N}3$-lattice if and only if 
		$\A$ is an implicative lattice.
		\item $\A^*$ is an $MV$-algebra (\cite[Definition 1.1.1]{CiMuOt99}) if and only if 
		$\A$ is trivial (and hence, if and only if $\A^*$ is the two-element Boolean algebra).
	\end{enumerate}
\end{proposition}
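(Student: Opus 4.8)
The plan is to prove each biconditional by combining the Galatos--Raftery doubling machinery already at our disposal (Definition~\ref{invol}, Proposition~\ref{p:gara}, Corollary~\ref{c:imp}) with the characterisations of $\mathcal{N}3$-lattices and $MV$-algebras as special $\NS$-algebras. For item~1, the right-to-left direction is immediate from Corollary~\ref{c:imp} together with the remark following it (if $\A$ is an implicative lattice, then $\A^*$ is an $\mathcal{N}3$-lattice). For the converse, I would use Proposition~\ref{prop:n3ext}: an $\NS$-algebra is an $\mathcal{N}3$-lattice exactly when it satisfies the (Nelson) equation. So the task reduces to showing that if $\A^*$ satisfies (Nelson), then $\A$ must be implicative, i.e.\ that $\land$ and $*$ coincide on $\A$. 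The natural route is contrapositive: if $\A$ is \emph{not} implicative, there exist $a,b \in A$ with $a * b < a \land b$, and I would substitute these (or suitable elements built from them, living in the $A$-part of $\A^*$ where the original operations are untouched) into the (Nelson) formula to produce a value different from $1$, exploiting the fact that on $A$ the term $x^2 = x * x$ genuinely differs from $x \land x = x$.

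For item~2, the right-to-left direction is trivial: if $\A$ is the trivial one-element CIRL, then $A^* = \{\neg 1, 1\} = \{0,1\}$, which is plainly the two-element Boolean algebra and hence an $MV$-algebra. For the forcing direction, the key structural fact is that $MV$-algebras satisfy the \emph{divisibility} (equivalently, the prelinearity plus the characteristic \L ukasiewicz) identity, and in particular are three-potent with $x^2 = x^3$ collapsing in a very rigid way; more to the point, an $MV$-algebra satisfies the (\L ukasiewicz) axiom $((p \Rightarrow q)\Rightarrow q)\Rightarrow((q\Rightarrow p)\Rightarrow p)$. I would argue that whenever $\A$ is non-trivial, it contains an element $a \in A$ with $\neg 1 < a < 1$ (recall every element of $A$ sits strictly below $1$ and strictly above $\neg 1$ in $A^*$), and then the doubling definition forces $a * \neg a = \neg(a \Rightarrow a) = \neg 1$ and $a \Rightarrow \neg a = \neg(a * a)$, which I can feed into (\L ukasiewicz) to obtain a value $\neq 1$, contradicting membership in the $MV$ class. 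The final clause ``hence $\A^*$ is the two-element Boolean algebra'' then follows since the double of the trivial algebra is exactly $\{0,1\}$.

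The main obstacle I anticipate is the forcing (left-to-right) direction of item~1: one must translate the lattice-theoretic failure of implicativity in $\A$ into a concrete refutation of the (Nelson) equation inside $\A^*$, and verifying the relevant $\Rightarrow$-values requires care because the (Nelson) term involves the defined operations $x^2$, $\neg$, and nested implications whose values must be computed using the doubling clauses. Getting the witnessing valuation to land in the part of $A^*$ where the deviation $a * b \neq a \land b$ is actually visible — rather than being washed out by the collapsing behaviour on the $\neg A$-part — is the delicate point. A cleaner alternative, which I would pursue if the direct substitution proves unwieldy, is to invoke the fact (implicit in the remark after Corollary~\ref{c:imp}) that $\A^*$ is an $\mathcal{N}3$-lattice iff its positive reduct behaves like a Heyting implication; since the positive cone of $\A^*$ is essentially $\A$ itself, $\mathcal{N}3$-hood of $\A^*$ pins down the implication of $\A$ to be relative pseudo-complementation, which is precisely implicativity.
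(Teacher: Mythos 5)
Your proposal is not yet a proof: at the two places where the real work lies it either defers to an unproven statement or proposes a step that demonstrably fails. For item~1, right-to-left, citing Corollary~\ref{c:imp} ``together with the remark following it'' is circular: that remark is an unproven aside (``it is not difficult to check\dots'') whose content is exactly this half of Proposition~\ref{p:garan3}; the paper discharges it here, by verifying the Nelson equation in $\A^*$ directly, case by case over the doubling clauses (the cases $(a,\neg b)$, $(\neg a,b)$, $(\neg a,\neg b)$, using $a^2=a$ on $A$). For the left-to-right direction, your contrapositive plan never identifies the witnessing substitution, and that substitution is the whole content of the proof. The paper instantiates the Nelson equation at $x:=a$, $y:=a^2$ for $a\in A$: by Definition~\ref{invol} every square of an element of $\neg A$ equals $\neg 1$, so $(\neg (a^2))^2\Rightarrow\neg a=\neg 1\Rightarrow\neg a=a\Rightarrow 1=1$, and of course $a^2\Rightarrow a^2=1$; hence the whole antecedent is $1$ and the Nelson equation forces $a\Rightarrow a^2=1$, i.e.\ $a=a^2$ for every $a\in A$, which is implicativity. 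Your fallback argument (``$\mathcal{N}3$-hood pins down the implication of $\A$ to be relative pseudo-complementation'') simply restates the claim to be proved, so it does not close the gap.

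For item~2 your proposed witness is wrong as stated: it is not true that \emph{any} $a$ with $\neg 1<a<1$ refutes (\L ukasiewicz) in $\A^*$. Take $\A=$ \L$_3$ and $a=\frac{1}{2}$; with $p:=\frac{1}{2}$, $q:=\neg\frac{1}{2}$ one computes $p\Rightarrow q=\neg(\frac{1}{2}*\frac{1}{2})=\neg 0$, then $(p\Rightarrow q)\Rightarrow q=\neg 0\Rightarrow\neg\frac{1}{2}=\frac{1}{2}\Rightarrow 0=\frac{1}{2}$, while $(q\Rightarrow p)\Rightarrow p=1\Rightarrow\frac{1}{2}=\frac{1}{2}$, so the (\L ukasiewicz) instance evaluates to $\frac{1}{2}\Rightarrow\frac{1}{2}=1$ --- even though (\L$_3)^*$ is \emph{not} an $MV$-algebra. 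In general the instance at $(a,\neg a)$ reduces to $(a\Rightarrow a^2)\Rightarrow a$, which can equal $1$ at non-idempotent elements; the failure is only visible at idempotents. The repair is to pass to $b:=a^2$, which by three-potency satisfies $b^2=b$ and is $\neq 1$ whenever $a\neq 1$: then the instance gives $(b\Rightarrow b)\Rightarrow b=b\neq 1$. The paper argues via the divisibility law $x*(x\Rightarrow y)\approx x\land y$ instead of (\L ukasiewicz), obtaining $\neg a=a\land\neg a=a*(a\Rightarrow\neg a)=a*\neg(a*a)$ and concluding this equals $\neg 1$; note that its own justification of that last step (``by $\leq$-monotonicity'') is terse and is also cleanest at the idempotent $b=a^2$, where $\neg(b*b)=\neg b$ and $b*\neg b=\neg(b\Rightarrow b)=\neg 1$, whence $a^2=1$ and integrality forces $a=1$, i.e.\ $A=\{1\}$. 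So your overall strategy for item~2 is salvageable, but only after the idempotency point --- which is precisely what your sketch misses --- is added.
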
  
\begin{proof} 
	1. Assume $\A$ is an implicative lattice, and let us check that the Nelson equation 
	$
	(( x ^{2} \Rightarrow  y ) \land ((\neg  y )^{2} \Rightarrow \neg  x )) \Rightarrow ( x  \Rightarrow  y ) \approx 1
	$
	holds in $\A^*$. 
	Let $a, b \in A$. Clearly, the only non-trivial cases are when either $a$ or $b$ are in $\neg A$. 
	Using the fact that $a^2 = a$ and $b^2 = b$, we have
	$(( a ^{2} \Rightarrow  \neg b ) \land ((\neg  \neg b )^{2} \Rightarrow \neg  a )) \Rightarrow ( a  \Rightarrow  \neg b )  
	=  (( a \Rightarrow  \neg b ) \land (b \Rightarrow \neg  a )) \Rightarrow ( a  \Rightarrow  \neg b ) = 1$.
	In addition, given that $\neg a \leq b$, we have
	$
	(( (\neg a) ^{2} \Rightarrow  b ) \land ((\neg  b )^{2} \Rightarrow \neg  \neg a )) \Rightarrow ( \neg a  \Rightarrow  b ) =
	(( (\neg a) ^{2} \Rightarrow  b ) \land ((\neg  b )^{2} \Rightarrow a )) \Rightarrow 1 = 1$.
	Lastly, we have
	$
	(( (\neg a) ^{2} \Rightarrow  \neg b ) \land ((\neg  \neg  b )^{2} \Rightarrow \neg  \neg a )) \Rightarrow ( \neg a  \Rightarrow  \neg  b ) =
	(( (\neg a) ^{2} \Rightarrow  \neg b ) \land (b ^{2} \Rightarrow  a )) \Rightarrow ( b  \Rightarrow  a ) 
	= (( (\neg a) ^{2} \Rightarrow  \neg b ) \land (b  \Rightarrow  a )) \Rightarrow ( b  \Rightarrow  a ) = 1$.
	
	Conversely, assume $\A^*$ is an $\mathcal{N}3$-lattice. Then,
	for all $a \in A$, we can show that $a \Rightarrow a^2 = 1$ 
	by instantiating the equation corresponding to the Nelson axiom
	as follows:
	$
	(( a ^{2} \Rightarrow  a^2 ) \land ((\neg  (a^2) )^{2} \Rightarrow \neg  a )) \Rightarrow ( a  \Rightarrow  a^2 ) = 1
	$.
	Since $ b^2 = \neg 1 $ for all $b \in \neg A$, 
	$
	(( a ^{2} \Rightarrow  a^2 ) \land ((\neg  (a^2) )^{2} \Rightarrow \neg  a )) \Rightarrow ( a  \Rightarrow  a^2 )
	= (1 \land (\neg 1 \Rightarrow \neg a)) \Rightarrow ( a  \Rightarrow  a^2 ) =  
	1 \Rightarrow ( a  \Rightarrow  a^2 ) =  a  \Rightarrow  a^2$,
	which implies the desired result.
	Thus $a = a^2$, which implies that~$\A$ is an implicative lattice.
	\\[1mm]
	2. Assume $\A^*$ is an $MV$-algebra. Then $\A^*$ satisfies the so-called divisibility equation
	$
	x * (x \Rightarrow y) = x \land y
	$ \cite[Proposition 1.1.5]{CiMuOt99}. Now let $a \in A$, so that
	$\neg a <_{A^*} a$. We have
	$\neg a = a \land \neg a  = a * (a \Rightarrow \neg a) = a * \neg (a * a) $.
	But $a * \neg a = \neg 1 $ and so, by $\leq$-monotonicity,
	$ a * \neg (a * a)  = \neg 1$. Thus, $\neg a = \neg 1$, which implies that
	$A = \{ 1 \}$.
\end{proof} 

The preceding proposition, besides characterizing the algebras of the form $\A^*$ that happen to be $\mathcal{N}3$-lattices,
highlights  the fact that $\A^*$ turns out to be an $MV$-algebra (or Heyting, or Boolean algebra) only in the degenerate case. 
For the reader familiar with the 
twist-structure representation of 
$\mathcal{N}3$-lattices~\cite[Proposition 8.4.3]{Od08}, it may be worth mentioning 
that (as one can easily verify) when
$\A^*$ is an $\mathcal{N}3$-lattice, the quotient algebra $\A^*_{\bowtie}$ given by the twist-structure representation
(cf.~Definition~\ref{def:n4})
is isomorphic to the ordinal sum of $\A$ with a one-element algebra (which constitutes the bottom element of $\A^*_{\bowtie}$).

\section{Strengthenings of~$\NS$ 
}
\label{s:ext}

In this section we take a brief look at the finitary\footnote{
	For the sake of simplicity we restrict our attention to \emph{finitary} strengthenings, though all the considerations of the present section generalize straightforwardly to arbitrary strengthenings (i.e., sub-generalized-quasivarieties).} strengthenings of~$\NS$. 
As is usual in algebraic logic, we shall in fact consider  the equivalent question about
sub
quasivarieties of~$\NS$-algebras (on
quasivarieties and quasiequations, see e.g.~\cite[Definition V.2.24]{BuSa00}). 
It is well known that 
finitary strengthenings of an algebraizable logic (in our case $\NS$) form a lattice 
that is dually isomorphic to the lattice of sub
quasivarieties of its equivalent algebraic semantics --- in our case,
$\NS$-algebras
(see e.g.~\cite[Theorem 3.33]{Font16}).
Similarly, 
axiomatic strengthenings  correspond to subvarieties~\cite[Corollary 3.40]{Font16}.

By combining, for instance, \cite[Corollary 5.3]{Ci86} with \cite[Theorem 1.59]{GaJiKoOn07}, we know that there are continuum many sub(quasi)varieties of $\mathcal{N}3$-lattices, and from this it follows that
there are at least continuum many sub
quasivarieties
of~$\NS$-algebras. An interesting  question
is how many of these sub
quasivarieties are included between $\NS$-algebras and $\mathcal{N}3$-lattices. 
Using the doubling construction of Subsection~\ref{ss:dou} we can obtain some partial results 
in this direction.

Let 
$\{ e_i : i \in I \} \cup \{ e \} \subseteq Fm \times Fm $ 
be equations in the language of residuated lattices 
(which does not include the $0$ constant).
Let
$$
q(\vec{x}) :=  \& \{ e_i : i \in I \} \ \meta \ e 
$$
be a 
quasiequation
where $\vec{x}$ are all the variables appearing in $\{ e_i : i \in I \} \cup \{ e \}$.
Define the 
quasiequation
$$
q^*(\vec{x}) := \& (\{ e_i : i \in I \} \cup \{ \neg x \sqsubseteq x : x \in \vec{x} \} ) \ \meta \ e
$$
which is built with formulas $Fm^*$ in the language of~$\NS$-algebras (which includes 0 and therefore the negation), where $x \sqsubseteq y$ is a shorthand for the equation $y \lor x \approx y$.
Notice that if
$q(\vec{x})$ is an equation
(i.e.~the set $\{ e_i : i \in I \}$ is empty), then $q^*(\vec{x})$ is a quasiequation of the form:
$$
\& \{ \neg x \sqsubseteq x : x \in \vec{x} \} ) \ \meta \ e
$$
It is not difficult to see that such a  quasiequation 
is equivalent, in the context of $\NS$-algebras, 
to the equation 
$e^*$ that is obtained from $e$ by substituting every variable~$x$ in~$e$
with the term $x \lor \neg x$. 
In other words, if  $q(\vec{x})$ is an equation in the language of residuated lattices, then
$q^*(\vec{x})$ is (equivalent to) an equation in the language of~$\NS$-algebras.

Let $\Al[A]$ be a three-potent CIRL,
and let 
$\Al[A]^*$ be the 
bounded CIRL 
obtained as in 
Definition~\ref{invol}
(which is an $\NS$-algebra by  Proposition~\ref{p:gara}).
Notice
that within $A^*$ the elements of $A$ are precisely 
the solutions to the equation $x \approx x \lor \neg x$ (abbreviated $\neg x \sqsubseteq x$),
that is,  
$
A = \{ a \in A^* : \neg a \leq a \}.
$


\begin{proposition} 	
	\label{p:trad}
	For any CIRL $\A$ and any 
	quasiequation  $q(\vec{x})$ in the language of residuated lattices,
	$$
	\A \vDash q(\vec{x}) 
	\quad
	\text{if and only if}
	\quad
	\A^* \vDash q^*(\vec{x}).
	$$ 
\end{proposition}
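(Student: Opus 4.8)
The plan is to exploit the two facts recorded immediately before the statement: the map $a \mapsto a$ embeds $\A$ into $\A^*$ as a subalgebra in the residuated-lattice language $\langle \land, \lor, *, \Rightarrow, 1 \rangle$ (the reduct that omits $\neg$ and $0$), and $A = \{ a \in A^* : \neg a \leq a \}$. The single observation that carries the whole argument is this: if $t$ is any term built from the residuated-lattice operations and $v$ is a valuation all of whose values lie in $A$, then the value of $t$ under $v$ computed in $\A^*$ coincides with the one computed in $\A$, since $A$ is closed under those operations; and, dually, a valuation into $A^*$ satisfies every side condition $\neg x \sqsubseteq x$ (for $x \in \vec{x}$) if and only if all of its values lie in $A$. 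Because $\vec{x}$ enumerates all variables occurring in $q(\vec{x})$, these side conditions are exactly what forces a valuation of $q^*$ to land inside the copy of $A$; thus $q^*$ is nothing but the \emph{relativisation} of $q$ to the definable subalgebra $A$ of $\A^*$.

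With this in hand, for the forward direction I would assume $\A \vDash q(\vec{x})$ and take an arbitrary valuation $v$ into $A^*$ satisfying the antecedent of $q^*(\vec{x})$. The conjuncts $\neg x \sqsubseteq x$ guarantee that $v(x) \in A$ for every variable, so $v$ may be read as a valuation into $\A$; by the observation it then satisfies each $e_i$ in $\A$ as well. The hypothesis $\A \vDash q(\vec{x})$ yields that $v$ satisfies $e$ in $\A$, and a final appeal to the observation transfers this back to $\A^*$, giving $\A^* \vDash q^*(\vec{x})$.

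The converse is symmetric. Assuming $\A^* \vDash q^*(\vec{x})$, I would take any valuation $w$ into $A$ satisfying the $e_i$ in $\A$; viewing $w$ as a valuation into $A^*$ it still satisfies each $e_i$, and since each $w(x) \in A$ it automatically satisfies every $\neg x \sqsubseteq x$, hence the full antecedent of $q^*(\vec{x})$ in $\A^*$. The hypothesis then forces $w$ to satisfy $e$ in $\A^*$, and transferring back along the subalgebra inclusion shows $w$ satisfies $e$ in $\A$, so $\A \vDash q(\vec{x})$.

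There is no serious obstacle here, the argument being a routine relativisation to a definable subuniverse. The only points needing care are (i) confirming that $A$ really is a subalgebra of $\A^*$ in the $0$- and $\neg$-free reduct, so that residuated-lattice terms evaluate identically in $\A$ and $\A^*$ — this is precisely the content of the Galatos-Raftery embedding recalled above — and (ii) checking that $\vec{x}$ genuinely lists all variables of $q$, so that the conjunction of side conditions pins every value down to $A$ and leaves no variable free to range over $\neg A$.
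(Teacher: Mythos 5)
Your proposal is correct and follows essentially the same route as the paper's own (much terser) proof: the rightward direction uses the fact that the side conditions $\neg x \sqsubseteq x$ force all values into $A$, and the leftward direction uses the fact that every $a \in A$ satisfies $\neg a \leq a$, with the transfer of term values along the subalgebra inclusion $A \subseteq A^*$ left implicit in the paper but spelled out by you. The extra care you take in points (i) and (ii) simply makes explicit what the paper's two-line argument relies on.
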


\begin{proof}
	For the rightward direction, it is sufficient to notice that
	all elements in $A^*$ satisfying the premisses of $q^*(\vec{x})$ must belong to $A$,
	so we can use $ q(\vec{x}) $ to obtain the desired result.
	As to the leftward direction, since any element $ a \in A$ satisfies $\neg a \leq a$, 
	we can use $q^*(\vec{x})$ to show that 
	$ q(\vec{x}) $ holds in $\A$.
\end{proof}	

Let $Q$ be a 
quasivariety 
of commutative, integral, 3-potent residuated lattices.
Then $\{ \Al[A]^* : \Al[A] \in Q \}$  is a class of~$\NS$-algebras by Proposition~\ref{p:gara}, and we can consider
the 
quasivariety $Q^* : = Q(\{ \Al[A]^* : \Al[A] \in Q \})$ generated by this class (see \cite[Definition 1.72]{Font16} for a definition of the $Q$ operator).
$Q^*$ is then a 
quasivariety  
of~$\NS$-algebras, and from our previous considerations we also know that 
if $Q$ is a 
variety, then 
$Q^*$ is also a 
variety. Moreover, from Proposition~\ref{p:trad} we have the following result:

\begin{proposition} 	
	\label{p:trad2}
	For any 
	quasivariety $Q$ of CIRLs and any 
	quasiequation $q$ in the language of $Q$, 
	we have
	$Q \vDash q$ if and only if
	$Q^* \vDash q^*$.
\end{proposition}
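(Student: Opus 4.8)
The plan is to deduce the quasivariety-level statement from the single-algebra equivalence of Proposition~\ref{p:trad}, combined with the standard fact that the quasivariety operator $Q(\cdot)$ (in the sense of~\cite[Definition 1.72]{Font16}) preserves quasiequational validity: a quasiequation holds in $Q(K)$ if and only if it holds in every member of $K$. Writing $K := \{ \A^* : \A \in Q \}$, so that $Q^* = Q(K)$, neither direction will then require any fresh computation inside the doubling construction.

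I would first dispatch the backward direction, which is immediate. Assume $Q^* \vDash q^*$ and fix an arbitrary $\A \in Q$. By construction $\A^* \in K \subseteq Q^*$, so $\A^*$ validates $q^*$; applying Proposition~\ref{p:trad} to $\A$ then gives $\A \vDash q$. As $\A$ was arbitrary, we conclude $Q \vDash q$.

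For the forward direction, assume $Q \vDash q$, so that $\A \vDash q$ for every $\A \in Q$. By Proposition~\ref{p:trad} this yields $\A^* \vDash q^*$ for each such $\A$, i.e.~$q^*$ holds throughout $K$. It then remains only to propagate $q^*$ from the generators $K$ to the whole generated quasivariety $Q^* = Q(K)$, which is exactly the preservation fact noted above (concretely, $Q(K) = ISPP_U(K)$, and quasiequations are preserved under each of the operators $I$, $S$, $P$ and $P_U$). Therefore $Q^* \vDash q^*$.

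The only delicate step --- and the one I expect to be the main obstacle --- is this last preservation argument. Two caveats are worth recording. First, if the index set $I$ is infinite then $q^*$ is a generalized quasiequation rather than a genuine first-order Horn sentence, so one must pass to the corresponding generalized-quasivariety operator and its preservation properties (in line with the footnote opening Section~\ref{s:ext}); the verification is nevertheless routine, since arbitrary conjunctions of equations in the antecedent remain preserved under the relevant operators. Second, one should note that $Q^*$ genuinely consists of $\NS$-algebras, which is guaranteed by the three-potency of the members of $Q$ together with Proposition~\ref{p:gara}, so that evaluating $q^*$ over $Q^*$ is meaningful in the language of $\NS$-algebras. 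Once these points are in place, both implications close at once.
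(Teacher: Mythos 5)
Your proof is correct and coincides with the paper's (implicit) argument: the paper states this proposition without a separate proof, as an immediate consequence of Proposition~\ref{p:trad}, and your derivation is exactly the intended one. Namely, the backward direction follows by pulling $q^*$ back along $K = \{\A^* : \A \in Q\} \subseteq Q^*$ and applying Proposition~\ref{p:trad} algebra by algebra, while the forward direction pushes $q^*$ from the generating class $K$ to $Q^* = Q(K)$ using the standard fact that quasiequations valid in a class remain valid in the quasivariety it generates (your caveats about infinite premiss sets and about $Q^*$ consisting of $\NS$-algebras match the paper's footnote and its appeal to Proposition~\ref{p:gara}, respectively).
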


Denote by $\mathcal{RL}_3$ the variety of 
three-potent CIRLs,
by $\mathcal{T}$ the trivial variety (in the language of residuated lattices), and by $[\mathcal{RL}_3, \mathcal{T}]$
the lattice of all sub
quasivarieties of $\mathcal{RL}_3$.
Similarly, we denote by 
$[(\mathcal{RL}_3)^*, \mathcal{BA}]$ the interval 
(in the lattice of all sub
quasivarieties of~$\NS$-algebras)
between $(\mathcal{RL}_3)^* := Q (\{ \A^* : \A \in   \mathcal{RL}_3 
\})$
and the variety of Boolean algebras $\mathcal{BA}$. Notice 
that $\mathcal{BA} = ( \mathcal{T})^*$ by Proposition~\ref{p:garan3}.2.


\begin{proposition} 	
	\label{p:latis}
	
	The map $(\cdot)^*$ is a lattice embedding of  $[\mathcal{RL}_3, \mathcal{T}]$
	into 
	$[(\mathcal{RL}_3)^*, \mathcal{BA}]$.
\end{proposition}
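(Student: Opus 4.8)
The plan is to show that $(\cdot)^*$ is an injective lattice homomorphism, i.e.\ that it is injective and preserves both meet and join of the two lattices of subquasivarieties involved. Recall that in such a lattice the meet is intersection and the join is $Q_1\vee Q_2=Q(Q_1\cup Q_2)$, the quasivariety generated by the union. First I would check that the map really lands inside the interval $[(\mathcal{RL}_3)^*,\mathcal{BA}]$: monotonicity of the generation operator gives $Q^*=Q(\{\A^*:\A\in Q\})\subseteq Q(\{\A^*:\A\in\mathcal{RL}_3\})=(\mathcal{RL}_3)^*$, while the lower bound $\mathcal{BA}\subseteq Q^*$ follows because the trivial CIRL belongs to every quasivariety $Q$ and its double is the two-element Boolean algebra (Proposition~\ref{p:garan3}.2), which generates $\mathcal{BA}$.

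Injectivity, together with order-reflection, I would obtain from Proposition~\ref{p:trad2}. Monotonicity ($Q_1\subseteq Q_2\Rightarrow Q_1^*\subseteq Q_2^*$) is immediate. For order-reflection, suppose $Q_1\not\subseteq Q_2$. Since both are quasivarieties of (three-potent) CIRLs, there is a quasiequation $q$ in the language of residuated lattices with $Q_2\vDash q$ but $Q_1\not\vDash q$; Proposition~\ref{p:trad2} then yields $Q_2^*\vDash q^*$ and $Q_1^*\not\vDash q^*$, whence $Q_1^*\not\subseteq Q_2^*$. Thus $(\cdot)^*$ is an order-embedding, hence injective. Equivalently, using the single-algebra statement Proposition~\ref{p:trad}, one checks that $\A^*\in Q^*$ iff $\A\in Q$, so that taking positive parts recovers $Q$ from $Q^*$.

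To handle the two lattice operations it is convenient to exhibit a monotone Galois connection. Define, for a subquasivariety $R$ of $(\mathcal{RL}_3)^*$, the class $R^+:=\{\A\in\mathcal{RL}_3:\A^*\in R\}$. The compatibility of the doubling of Definition~\ref{invol} with the class operators --- a subalgebra of a double is again a double, $(\prod_i\A_i)^*$ embeds into $\prod_i\A_i^*$, and doubling commutes with ultraproducts (by {\L}o\'s's theorem, since membership in the positive part $\{a:\neg a\le a\}$ is first-order) --- shows that $R^+$ is again a quasivariety of three-potent CIRLs and that $Q^*\subseteq R$ iff $Q\subseteq R^+$. Hence $(\cdot)^*$ is a lower adjoint and therefore preserves arbitrary joins; in particular $(Q_1\vee Q_2)^*=Q_1^*\vee Q_2^*$.

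The remaining, and genuinely delicate, point is preservation of meets. The inclusion $(Q_1\cap Q_2)^*\subseteq Q_1^*\cap Q_2^*$ is trivial; the reverse inclusion is the main obstacle. Through the adjunction it amounts to showing that $Q_1^*\cap Q_2^*$ is generated by the doubles it contains, and these are precisely $\{\A^*:\A\in Q_1\cap Q_2\}$ (since $\A^*\in Q_i^*$ iff $\A\in Q_i$). I would attack this through relative subdirect representation: every $\B\in Q_1^*\cap Q_2^*\subseteq(\mathcal{RL}_3)^*$ is a subdirect product of algebras that are subdirectly irreducible relative to $(\mathcal{RL}_3)^*$, and each such factor, lying in $ISP_U$ of the generating doubles, is itself a double $\mathbf{H}^*$ with $\mathbf{H}\in\mathcal{RL}_3$. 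What must then be shown is that $\mathbf{H}\in Q_1\cap Q_2$ for every factor. This is exactly the hard part: the factors are homomorphic images of $\B$, and quasivarieties are not closed under $H$, so membership of $\B$ in $Q_i^*$ does not transfer to the factors for free, and one must also be careful that an algebra subdirectly irreducible relative to the smaller quasivariety need not be so relative to $(\mathcal{RL}_3)^*$. I expect to overcome this by invoking the twist-structure representation to describe the relatively subdirectly irreducible $\NS$-algebras concretely, together with the observation that the positive part $\B^+:=\{b\in B:\neg b\le b\}$ is an RL-subalgebra of $\B$ lying in $Q_1\cap Q_2$ (being a subalgebra of a product of doubles coming from each $Q_i$), which constrains the admissible factors; this is where the real work of the proof lies.
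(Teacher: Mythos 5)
Your completed steps coincide with --- and go beyond --- the paper's own proof. The paper's entire argument is your second paragraph: order-preservation is declared obvious, order-reflection is proved by exactly your translation argument (a quasiequation $q$ separating $Q_1$ from $Q_2$ yields $q^*$ separating $Q_1^*$ from $Q_2^*$, via Propositions~\ref{p:trad} and~\ref{p:trad2}), and the proof then ends with the sentence that the map is ``an order embedding and therefore a (complete) lattice embedding.'' The paper does not verify that the map lands in the interval $[(\mathcal{RL}_3)^*,\mathcal{BA}]$ (your first paragraph; correct, and the key point $\mathcal{BA}=(\mathcal{T})^*$ is Proposition~\ref{p:garan3}.2, which the paper records just before the statement), and it proves nothing about joins or meets. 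Your adjunction argument --- $Q^*\subseteq R$ iff $Q\subseteq R^+$, with $R^+$ a quasivariety because doubling turns subalgebras into subalgebras, products into subalgebras of products, and commutes with ultraproducts --- is sound and establishes join-preservation, which is strictly more than the paper justifies.

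Concerning the meet-preservation step that you flag as ``the real work'': you will not find it in the paper, because it is not there. The inference from ``order embedding'' to ``lattice embedding'' with which the paper closes is not valid as a general lattice-theoretic principle (an order embedding need not preserve joins or meets, since its image need not be a sublattice), so, read strictly, the paper establishes only that $(\cdot)^*$ is an order embedding --- precisely the part you completed. This weaker conclusion is all that is used later in the section: the lower bounds on the cardinality of $[(\mathcal{RL}_3)^*,\mathcal{BA}]$ and of $[\NS,(\mathcal{IL})^*]$ need only injectivity of the map. So your proposal should not be judged as missing something the paper supplies; rather, it isolates correctly the one claim ($Q_1^*\cap Q_2^*\subseteq(Q_1\cap Q_2)^*$) that a literal reading of the statement requires but that neither your sketch nor the paper actually proves. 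To match the paper it suffices to stop after your order-reflection argument; to prove the statement in the strong sense (injective lattice homomorphism), the meet inclusion remains to be settled, and your proposed attack via relatively subdirectly irreducible algebras and positive parts is a reasonable, but genuinely open, plan.
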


\begin{proof}
	It is obvious that the map $(.)^*$ is order-preserving.
	We show that it is  also order-reflecting, which implies that it is
	injective.
	Let $Q_1, Q_2$ be  
	quasivarieties of three-potent CIRLs.
	Assume $(Q_1)^* \subseteq (Q_2)^*$,
	let $\A \in Q_1$ 
	and
	suppose $q$ is any 
	quasiequation such that $Q_2 \vDash q$.
	Then, by Proposition~\ref{p:trad}, we have $ (Q_2)^* \vDash q^*$.
	By definition we have $\A^* \in (Q_1)^*$ and therefore $\A^* \in (Q_2)^*$, which means 
	that
	$\A^* \vDash q^*$. Then again by Proposition~\ref{p:trad} we have $\A \vDash q$, which means that
	$\A \in Q_2$. Hence, $Q_1 \subseteq Q_2$ as required. 
	Thus the map $(.)^*$ is an order embedding and therefore a (complete) lattice embedding.
\end{proof}

By our previous considerations, 
$(\mathcal{RL}_3)^*$ is a variety of~$\NS$-algebras, and in fact it is  not difficult
to show that it is a proper subvariety of~$\NS$-algebras; 
it is proper because, e.g., the  equation 
$(x \Rightarrow \neg x)\lor( \neg x \Rightarrow x) \approx 1$
is valid in $(\mathcal{RL}_3)^*$
but not in all $\NS$-algebras (the algebra $\A_8$ shown earlier being a witness
).
Similarly, denoting by $\mathcal{IL}$ the variety of implicative lattices,
we have by Proposition~\ref{p:garan3}.1 that 
$(\mathcal{IL})^*$ is a proper subvariety of $\mathcal{N}3$-lattices.

By \cite[Theorem 9.54]{GaJiKoOn07}, the cardinality of   
$[\mathcal{RL}_3, \mathcal{T}]$ is greater than or equal to 
the continuum.
By Proposition~\ref{p:latis}, this implies  that there are at least continuum many 
quasivarieties in
$[(\mathcal{RL}_3)^*, \mathcal{BA}] \subseteq [\NS, \mathcal{BA}]$.
It is not difficult to see (e.g., by observing again that the equation $(x \Rightarrow \neg x)\lor( \neg x \Rightarrow x) \approx 1$
need not be satisfied in all $\mathcal{N}3$-lattices) that
$[\mathcal{N}3, \mathcal{BA}]$ is not a sublattice of 
$[(\mathcal{RL}_3)^*, \mathcal{BA}]$, and this entails that we actually 
have now some more information than when we started off.
Similarly, denoting by $\kappa$ the cardinality of 
$[\mathcal{RL}_3, \mathcal{IL}]$, we now know that
the cardinality of 
$[(\mathcal{RL}_3)^*, (\mathcal{IL})^*]$, and therefore
that of $[\NS, (\mathcal{IL})^*]$,
must be at least as large as~$\kappa$. 
Unfortunately, as far as we know, 
the cardinality of $[\mathcal{RL}_3, \mathcal{IL}]$ is at present unknown.
This leaves us with an interesting open problem,
namely the study of the cardinality (and the structure) of the lattice of logics/algebras
$[\NS, \mathcal{N}3]$. We 
mention a few more open problems in the next section.

%

\section{Future work}
\label{s:seven}

To the best of the authors' knowledge, the present paper --- together with its precursor~\cite{NaRi} --- is the first devoted to a semantical
study of Nelson's logic $\NS$. We have though but scratched the surface of what may turn out to be an interesting
topic for future research. We mention here but three directions. 

The first is to study other
types of calculi for $\NS$, for example sequent-style or display-style calculi; 
in particular one would be interested in calculi that enjoy certain desirable properties (e.g., analytic, cut-free ones) and that fit well within the general proof-theoretic framework of substructural logics. Encouraging results in this direction have been obtained about $\mathcal{N}3$, but at this point it seems far from obvious whether (or how) these may be  extended to our 
$\NS$. 

The second issue may be cast in purely algebraic terms. Thanks to recent work of M.~Spinks and R.~Veroff, 
we know that  
$\mathcal{N}4$-lattices as well as   $\mathcal{N}3$-lattices
can be equivalently presented 
taking either the strong implication $(\Rightarrow)$ or the weak one $(\to)$ as primitive. 
In the case of  $\mathcal{N}4$-lattices, this result {turns} out to be surprisingly hard to prove;
not so hard for $\mathcal{N}3$-lattices, where the term defining the weak implication
from the strong one is also simpler \cite[Theorem 1.1.3]{SpVe08}, 
\emph{viz}.\
$\phi \to \psi :=  \phi \Rightarrow ( \phi \Rightarrow  \psi)$.
The same question can now be asked about $\NS$-algebras: Is it possible to axiomatize them taking the weak implication as primitive? For this, one might start by checking which theorems of $\mathcal{N}3$ and $\mathcal{N}4$ regarding the weak implication  are valid in all $\NS$-algebras,
once we translate them according to the preceding term. 
The answer seems at the moment  far from obvious, and might provide us with further logical insight into $\NS$.
It is well known, for example, that  the $\{ \land, \lor, \to \}$-fragment of $\mathcal{N}4$ (recall that here the implication $\to$ is the weak one) coincides with the corresponding fragment of intuitionistic logic; which means that  $\mathcal{N}4$ (and  $\mathcal{N}3$) may be regarded as strengthenings of intuitionistic logic by 
an involutive De Morgan
negation. 
A solution to the above-mentioned problem would then tell us whether an analogous result can be stated 
for the logic $\NS$  as well.

Lastly, a  promising line of research may be opened by the study of the Nelson axiom/equation (Proposition~\ref{p:51}.9) in a more abstract algebraic setting. 
It is not difficult to see that 
the Nelson equation is equivalent, in the context of~$\NS$-algebras, to the following condition: 
$$
a^2 \Rightarrow b = 1   \ \text{ and } \  
(\neg b )^2 \Rightarrow \neg a = 1 
\ \ \text{ imply } \ \ 
a \leq b. 
$$
One can also (less immediately) show that this is in turn equivalent to the following:
\begin{equation}
\label{eq:ord}
\vartheta(b,1) \subseteq \vartheta(a,1)
\ \  \text{ and } \  \ 
\vartheta(a,0) \subseteq \vartheta(b,0)
\ \ \text{ imply  } \ \ 
a \leq b
\end{equation}
where $\vartheta(a,1)$ denotes the congruence generated by the set $\{ a, 1 \}$ and so on. 
It is interesting to notice that the latter condition is almost purely algebraic, 
for it  only relies on the presence of two distinguished elements in the algebra and 
(inessentially) of a 
partial order. Moreover, it closely reminds the properties 
of \emph{congruence orderability} and \emph{congruence quasi-orderability} 
studied in~\cite{Ag01, Ag01a}. This suggests that 
a purely algebraic investigation of the Nelson equation, restated as
\eqref{eq:ord}, along the lines of Aglian\`{o}'s work
may be a fruitful one. 
The first results in this direction have by now been published as~\cite{SpRiNa19,RiSp19a}. Further results
are to be found in~\cite{RiSp19b}.

\section*{Acknowledgements}

The authors would like to thank Sergey Drobyshevich 
and three anonymous referees
for several useful comments on earlier versions of the paper.


\end{document}